\title{A family of pseudo-Anosov braids with small dilatation}
\author{Eriko Hironaka}
\address{Department of Mathematics\\
Florida State University\\\newline
Tallahassee FL 32306-4510\\
USA}
\email{hironaka@math.fsu.edu}
\urladdr{}
\author{Eiko Kin}
\address{Department of Mathematical and Computing Sciences\\
Tokyo Institute of Technology\\\newline
2-12-1-W8-45 Oh-okayama\\
Meguro-ku\\
Tokyo 152-8552\\
Japan}
\email{kin@is.titech.ac.jp}
\urladdr{}
\let\xysavmatrix\xymatrix
\def\xymatrix{\disablesubscriptcorrection\xysavmatrix}
\def\cnewtheorem#1[#2]#3{\newtheorem{#1}{#3}[section]
\expandafter\let\csname c@#1\endcsname\c@thm}
\newtheorem{thm}{Theorem}[section]
\theoremstyle{remark}
\def \HH {\mathrm{H}}
\def \sT {\mathcal T}
\def\sE{{\mathcal E}}
\def \sF {{\mathcal F}}
\def \sS{\mathcal S}
\def\sK{\mathcal K}
\def\sB{{\mathcal B}}
\def\sM{{\mathcal M}}
\def\sQ{{\mathcal Q}}
\def\sR{{\mathcal R}}
\def\DE{{\mathcal{E}_{\mathrm{dir}}}}
\def\g{\mathfrak g}
\def\f{\mathfrak f}
\def\h{\mathfrak h}
\def\r{\mathfrak r}
\begin{document}

\begin{asciiabstract}
This paper describes a family of pseudo-Anosov braids with small
dilatation.  The smallest dilatations occurring for braids with 3, 4
and 5 strands appear in this family.  A pseudo-Anosov braid with
2g+1 strands determines a hyperelliptic mapping class with the same
dilatation on a genus-g surface.  Penner showed that logarithms of
least dilatations of pseudo-Anosov maps on a genus-g surface grow
asymptotically with the genus like 1/g, and gave explicit examples
of mapping classes with dilatations bounded above by log 11/g.
Bauer later improved this bound to log 6/g.  The braids in this paper
give rise to mapping classes with dilatations bounded above by
log(2+sqrt(3))/g.  They show that least dilatations for hyperelliptic
mapping classes have the same asymptotic behavior as for general mapping
classes on genus-g surfaces.
\end{asciiabstract}

\begin{htmlabstract}
This paper describes a family of pseudo-Anosov braids with small
dilatation.  The smallest dilatations occurring for braids with 3,4 and
5 strands appear in this family.  A pseudo-Anosov braid with 2<i>g</i>+1
strands determines a hyperelliptic mapping class with the same dilatation
on a genus&ndash;<i>g</i> surface.  Penner showed that logarithms
of least dilatations of pseudo-Anosov maps on a genus&ndash;<i>g</i>
surface grow asymptotically with the genus like 1/<i>g</i>, and gave
explicit examples of mapping classes with dilatations bounded above by
log 11/<i>g</i>.  Bauer later improved this bound to log 6/<i>g</i>.
The braids in this paper give rise to mapping classes with dilatations
bounded above by log(2+&radic;3)/<i>g</i>.  They show that least
dilatations for hyperelliptic mapping classes have the same asymptotic
behavior as for general mapping classes on genus&ndash;<i>g</i> surfaces.
\end{htmlabstract}

\begin{abstract}
This paper describes a family of pseudo-Anosov braids with small
dilatation.  The smallest dilatations occurring for braids with 3,4
and 5 strands appear in this family.  A pseudo-Anosov braid with
$2g+1$ strands determines a hyperelliptic mapping class with the same
dilatation on a genus--$g$ surface.  Penner showed that logarithms of
least dilatations of pseudo-Anosov maps on a genus--$g$ surface grow
asymptotically with the genus like $1/g$, and gave explicit examples
of mapping classes with dilatations bounded above by $\log 11/g$.
Bauer later improved this bound to $\log 6/g$.  The braids in this paper
give rise to mapping classes with dilatations bounded above by $\log(2 +
\sqrt{3})/g$.  They show that least dilatations for hyperelliptic mapping
classes have the same asymptotic behavior as for general mapping classes
on genus--$g$ surfaces.
\end{abstract}

\maketitle

\section{Introduction}

In this paper, we study a family of generalizations of these examples to arbitrary numbers of strands.   
Let $\sB(D,s)$ denote the braid group on $D$ with $s$ strands, 
where $D$ denotes a $2$--dimensional closed disk. 
First consider the braids $\beta_{m,n}$ in $\sB(D,m+n+1)$ given by 
$$
\beta_{m,n} = \sigma_1\ldots\sigma_m\sigma_{m+1}^{-1}\ldots \sigma_{m+n}^{-1}.
$$
Matsuoka's example \cite{Ma86} appears as $\beta_{1,1}$, and Ko, Los and
Song's example \cite{SoKoLos02} as $\beta_{2,1}$.   
For any $m,n \ge 1$, $\beta_{m,n}$ is pseudo-Anosov  (\fullref{Bmn-classification-thm}). 
The dilatations of $\beta_{m,m}$ coincide with those found by Brinkmann 
\cite{Brinkmann04} (see also \fullref{fiberedQ-section}),  who also shows that the dilatations 
arising in this family can be made arbitrarily close to $1$. 

\begin{figure}[htbp]
\labellist\small
\pinlabel {$m$} [b] at 45 173
\pinlabel {$n$} [b] at 118 173
\pinlabel {$m$} [b] at 279 173
\pinlabel {$n$} [b] at 350 173
\pinlabel {(a)} [b] at 60 0
\pinlabel {(b)} [b] at 304 0
\endlabellist
\begin{center}
\includegraphics[height=1.5in]{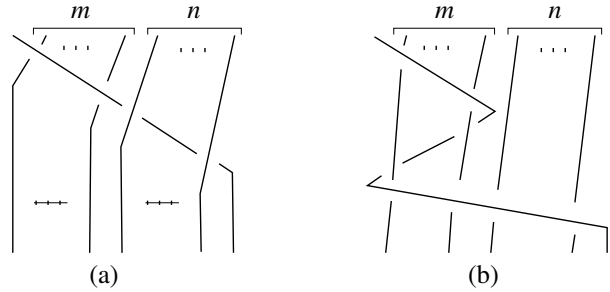}
\caption{Braids (a) $\beta_{m,n}$ and (b) $\sigma_{m,n}$}
\label{SmnBmn-fig}
\end{center}
\end{figure}

It turns out that one may find smaller dilatations 
by passing a strand of $\beta_{m,n}$ once around the remaining strands. 
As a particular example, we consider the braids $\sigma_{m,n}$ defined by taking the rightmost-strand 
of $\beta_{m,n}$ and passing it counter-clockwise once around the remaining strands.  
\fullref{SmnBmn-fig} gives an illustration of $\beta_{m,n}$ and $\sigma_{m,n}$. 
The braid $\sigma_{1,3}$ is conjugate to Ham and Song's braid  $\sigma_1\sigma_2\sigma_3\sigma_4\sigma_1\sigma_2$. 
For $|m-n| \leq 1$, we show that $\sigma_{m,n}$ is periodic or reducible. 
Otherwise $\sigma_{m,n}$ is pseudo-Anosov 
with dilatation strictly less than the dilatation of $\beta_{m,n}$ 
(\fullref{Smn-classification-thm}, \fullref{inequality-cor}). 
The dilatations of $\sigma_{g-1,g+1}$ $(g \ge 2)$ satisfy the inequality 
\begin{eqnarray}
\label{inequality-eqn}
\lambda(\sigma_{g-1,g+1})^g  <  2 + \sqrt{3}
\end{eqnarray}
(\fullref{min-Smn-prop}). 

Let $\sM_g^s$ denote the set of {\it mapping classes} (or isotopy classes) 
of homeomorphisms on the closed orientable genus--$g$ surface $F_g$ 
set-wise preserving $s$ points.  
We denote $\mathcal{M}_g^0$ by $\mathcal{M}_g$. 
For any subset $\Gamma \subset \sM_g^s$, define $\lambda(\Gamma)$ to be the least dilatation 
among pseudo-Anosov elements of $\Gamma$, 
and let $\delta(\Gamma)$ be the logarithm of $\lambda(\Gamma)$.  
For the braid group $\sB(D,s)$, and any subset $\Gamma \subset \sB(D,s)$, 
define $\lambda(\Gamma)$ and $\delta(\Gamma)$ in a similar way. 
By a result of Penner \cite{Penner91} (see also McMullen \cite{McMullen:Poly}), 
$\delta(\sM_g) \asymp \frac{1}{g}$. 

An element of $\sM_g$ is called {\it hyperelliptic} if it commutes with an involution $\iota$ on $F_g$ 
such that the quotient of $F_g$ by $\iota$ is $S^2$. 
Let $\sM_{g,\mathrm{hyp}} \subset \sM_g$ denote the subset of hyperelliptic elements of $\sM_g$.    
Any pseudo-Anosov braid on $2g+1$ strands determines 
a hyperelliptic element of $\sM_g$ with the same dilatation (\fullref{spectrum-prop}).  
Thus, \eqref{inequality-eqn} implies: 

\begin{thm}
\label{inequalities-thm} 
For $g \ge 2$, 
$$\delta(\sM_g)  \leq \delta(\sM_{g,\mathrm{hyp}}) \leq
\delta(\sB(D,2g+1)) < \frac{\log(2 + \sqrt{3})}{g}.$$
\end{thm}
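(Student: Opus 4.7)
The plan is to chain together three ingredients that have already been set up in the excerpt: the set-theoretic containment $\sM_{g,\mathrm{hyp}} \subseteq \sM_g$, the dilatation-preserving assignment from pseudo-Anosov braids on $2g+1$ strands to hyperelliptic elements of $\sM_g$ (\fullref{spectrum-prop}), and the explicit bound \eqref{inequality-eqn} coming from \fullref{min-Smn-prop}. The theorem is essentially a synthesis, so the proof will be short.

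First I would dispose of the leftmost inequality $\delta(\sM_g) \leq \delta(\sM_{g,\mathrm{hyp}})$ by pure bookkeeping: since $\sM_{g,\mathrm{hyp}} \subseteq \sM_g$, the infimum of dilatation logarithms over the larger set is at most that over the smaller subset. For the middle inequality $\delta(\sM_{g,\mathrm{hyp}}) \leq \delta(\sB(D,2g+1))$, I would invoke \fullref{spectrum-prop}: every pseudo-Anosov braid $\beta \in \sB(D,2g+1)$ produces a hyperelliptic element of $\sM_g$ with dilatation $\lambda(\beta)$, so the set of dilatations realized by pseudo-Anosov elements of $\sM_{g,\mathrm{hyp}}$ contains the set realized by pseudo-Anosov braids on $2g+1$ strands, and the inequality on infima follows.

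For the final strict inequality I would exhibit $\sigma_{g-1,g+1}$ as an explicit witness. For $g \geq 2$, the parameters $m=g-1 \geq 1$ and $n=g+1 \geq 3$ satisfy $|m-n|=2>1$, so \fullref{Smn-classification-thm} guarantees that $\sigma_{g-1,g+1}$ is pseudo-Anosov; and by construction it lies in $\sB(D,m+n+1) = \sB(D,2g+1)$. Taking logarithms in \eqref{inequality-eqn} gives $\log \lambda(\sigma_{g-1,g+1}) < \log(2+\sqrt{3})/g$, hence
$$\delta(\sB(D,2g+1)) \leq \log \lambda(\sigma_{g-1,g+1}) < \frac{\log(2+\sqrt{3})}{g},$$
completing the chain.

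There is no genuine obstacle at this stage of the paper. All the real work is upstream: namely proving that $\sigma_{g-1,g+1}$ is pseudo-Anosov (\fullref{Smn-classification-thm}), establishing the sharp bound on its dilatation (\fullref{min-Smn-prop}), and constructing the hyperelliptic lift of a $(2g+1)$--braid (\fullref{spectrum-prop}). Once those are in hand, \fullref{inequalities-thm} is a one-line assembly, and I would present it that way rather than rederive anything.
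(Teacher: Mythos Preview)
Your proposal is correct and follows exactly the paper's intended argument: the paper presents \fullref{inequalities-thm} as an immediate consequence of \eqref{inequality-eqn} together with \fullref{spectrum-prop} (the first two inequalities being packaged separately as \fullref{spectrum-cor}), and your expanded write-up unpacks precisely those ingredients in the same order.
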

This improves the upper bounds on $\delta(\sM_g)$ found by Penner
$\bigl(\frac{\log 11}{g}\bigr)$ \cite{Penner91} and Bauer
$\bigl(\frac{\log 6}{g}\bigr)$ \cite{Bauer92}. 
\fullref{inequalities-thm} shows the following. 

\begin{thm}
\label{asymp-thm}  
For $g \ge 2$,
$$\delta(\sB(D,2g+1)) \asymp \tfrac{1}{g}\quad \text{and}\quad
\delta(\sM_{g,\mathrm{hyp}})  \asymp \tfrac{1}{g}.$$
\end{thm}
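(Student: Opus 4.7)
The plan is to observe that Theorem \ref{asymp-thm} is essentially a direct consequence of Theorem \ref{inequalities-thm} combined with Penner's lower bound on $\delta(\sM_g)$, so no new geometric input is needed beyond what has already been assembled.

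First I would record the upper bound. Theorem \ref{inequalities-thm} already gives
$$
\delta(\sB(D,2g+1)) < \frac{\log(2+\sqrt{3})}{g}
\quad\text{and}\quad
\delta(\sM_{g,\mathrm{hyp}}) < \frac{\log(2+\sqrt{3})}{g},
$$
so both quantities are $O(1/g)$. This is the half of $\asymp$ that required the work of the paper.

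Next I would establish the matching lower bound. Penner's theorem, cited in the introduction, says $\delta(\sM_g) \asymp 1/g$; in particular there is a constant $c > 0$ with $\delta(\sM_g) \geq c/g$ for all $g \geq 2$. Now I would use the inequality chain already displayed in Theorem \ref{inequalities-thm}:
$$
\delta(\sM_g) \leq \delta(\sM_{g,\mathrm{hyp}}) \leq \delta(\sB(D,2g+1)).
$$
The first inequality is just the observation that $\sM_{g,\mathrm{hyp}} \subset \sM_g$ (infimum over a smaller set is no smaller), and the second follows from \fullref{spectrum-prop}, which embeds dilatation spectra of pseudo-Anosov braids on $2g+1$ strands into the dilatation spectrum of $\sM_{g,\mathrm{hyp}}$. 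Consequently both $\delta(\sM_{g,\mathrm{hyp}})$ and $\delta(\sB(D,2g+1))$ are bounded below by $c/g$, and hence are $\Omega(1/g)$.

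Combining the upper and lower bounds gives $\delta(\sM_{g,\mathrm{hyp}}) \asymp 1/g$ and $\delta(\sB(D,2g+1)) \asymp 1/g$, as required. There is no real obstacle in this argument; the only point worth stating carefully is the direction of the set inclusions versus the corresponding inequalities for $\lambda$ and $\delta$ (restricting to a subfamily can only increase the minimum dilatation), but this is exactly how the chain in Theorem \ref{inequalities-thm} is oriented, so Penner's lower bound transfers directly to the two quantities of interest.
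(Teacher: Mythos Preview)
Your proposal is correct and matches the paper's approach exactly: the paper simply states that \fullref{inequalities-thm} shows \fullref{asymp-thm}, leaving implicit the argument you have spelled out (upper bound from the chain of inequalities, lower bound from Penner's $\delta(\sM_g)\asymp 1/g$ transferred up the same chain). There is nothing to add.
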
 

This paper is organized as follows.  
\fullref{prelim-section} reviews basic terminology and results on mapping class groups. 
In \fullref{main-section}, we determine the Thurston--Nielsen types of
$\beta_{m,n}$ and $\sigma_{m,n}$ by finding efficient graph maps for their
monodromy actions following Bestvina and Handel \cite{BH94}.  
We observe that the associated train tracks have ``star-like" components, 
and their essential forms don't depend on $m$ and $n$ 
(Figures~\ref{Bmn-TT} and \ref{Smn-TT}). 
To find bounds and inequalities among the dilatations, we apply the notion of 
Salem--Boyd sequences \cite{Boyd77,Salem44}, and relate the similar forms 
of the efficient graph maps for $\beta_{m,n}$ and $\sigma_{m,n}$ to similar forms for 
characteristic polynomials of the dilatations. 
In particular, we show that the least dilatation that occurs among $\beta_{m,n}$ and $\sigma_{m,n}$ 
for $m+n = 2g$ $(g \ge 2)$ is realized by $\sigma_{g-1,g+1}$, 
and find bounds for $\lambda(\sigma_{g-1,g+1})$ yielding the inequality  \eqref{inequality-eqn}. 
\fullref{discussion-section} discusses the problem of determining the least dilatations of 
special subclasses of  pseudo-Anosov maps. 
In \fullref{forcing-section}, we briefly describe the relation between the forcing relation 
on  braid types and dilatations, and show how $\sigma_{m,n}$ arise as the braid types of 
periodic orbits of the Smale--horseshoe map. 
In \fullref{fiberedQ-section}, we consider  pseudo-Anosov maps arising as the monodromy of fibered links, and relate our examples to those of Brinkmann. 

\subsection*{Acknowledgements}
The authors thank Hiroyuki Minakawa for valuable discussions, and an 
algebraic trick that improved our original upper bound for $\lambda(\sigma_{g-1,g+1})$. 
The first author thanks the J\,S\,P\,S, Osaka University and host Makoto Sakuma 
for their hospitality and support during the writing of this paper. 
The second author is grateful for  the financial support provided by the research fellowship of 
the 21st century COE program in Kyoto University. 

\section{Preliminaries}
\label{prelim-section} 
In this section, we review basic definitions and properties of braids 
(\fullref{braids-section}), mapping class groups (\fullref{mappings-section}), 
spectra (\fullref{spectrum-section}), and a criterion of the pseudo-Anosov property 
(\fullref{Bestvina-Handel-section}).  
Some results are well-known, and more complete expositions can be found in
the articles by Bestvina--Handel \cite{BH94} and
Fathi--Laudenbach--Poenaru \cite{FLP}, and the books by Birman
\cite{Birman74}, and Casson--Bleiler \cite{CB88}. 
We include them here for the convenience of the reader. 

\subsection{Braids}
\label{braids-section}

Let $F$ be a compact orientable surface with $s$ marked points 
${\sS}=\{p_1,\dots,p_s\} \subset \mathrm{int}(F)$, the interior of $F$. 
A {\it braid representative} $\beta$ on $F$ is the images 
of continuous maps 
$$f_{p_1},\dots,f_{p_s} \colon\thinspace I= [0,1] \rightarrow F \times I,$$
satisfying for $i=1,\dots,s$, 
\begin{enumerate}
\item[(B1)] $f_{p_i} (0) = p_i \times 0$, 
\item[(B2)] $f_{p_i}(1) \in \sS \times 1$, 
\item[(B3)] $f_{p_i}(t) \in F \times t $ for $t \in I$,  and
\item[(B4)] $f_{p_i}(t) \neq f_{p_j}(t)$ for $t$ and $i \neq j$.
\end{enumerate}
Define the product of two braid representatives to be their concatenation. 
Let $\sB(F;\sS)$ be the set of braid representatives up to ambient isotopy fixing the 
boundary of $F$ point-wise. 
The above definition of product determines a well-defined group structure on $\sB(F;\sS)$, 
and the group is called the {\it braid group} on $F$. 

For any partition $\sS = \sS_1 \cup \cdots \cup \sS_r$, let $\sB(F;\sS_1,\dots,\sS_r)$ be 
the subgroup of $\sB(F;\sS)$ consisting of braids $(f_{p_1},\dots,f_{p_s})$ satisfying 
for all $p \in \sS_j$ ($j \in \{1, \ldots, r\}$),   $f_p(1) \in \sS_j$. 

In the rest of this section, we assume that 
$F$ is either a disk $D$ or a sphere $S^2$. 
Then the braid group $\sB(F;\sS)$ has generators  $\sigma_1,\dots,\sigma_{s-1}$, 
where $\sigma_i$ is the braid shown in \fullref{braidgen-fig}.  
When $F = D$, $\sB(D;\sS)$ is called the {\it Artin braid group} and has finite presentation 
$$
\langle \sigma_1,\dots,\sigma_{s-1} \thinspace: \thinspace 
\sigma_i\sigma_{i+1}\sigma_i = \sigma_{i+1}\sigma_i\sigma_{i+1},\ 
\sigma_i\sigma_j = \sigma_j\sigma_i \ \mbox{if $|i-j| \ge 2$}\rangle.
$$
\vspace{-12pt}
\begin{figure}[htbp]
\labellist\tiny
\pinlabel {$1$} [b] at 2 95
\pinlabel {$i{-}1$} [b] at 52 95
\pinlabel {$i$} [b] at 82 95
\pinlabel {$i{+}1$} [b] at 114 95
\pinlabel {$i{+}2$} [b] at 145 95
\pinlabel {$s$} [b] at 202 95
\endlabellist
\begin{center}
\includegraphics[width=1.5in]{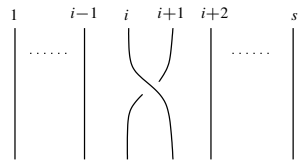}
\caption{Braid generator $\sigma_i$}
\label{braidgen-fig}
\end{center}
\end{figure}
Consider the natural map $c \colon \thinspace D \rightarrow S^2$ given by identifying $\partial D$, 
the boundary of $D$ to a point $p_\infty$ on $S^2$.  
 By abuse of notation, we will write $\sS$ for $c(\sS)$. 
Then there is an induced map: 
\begin{eqnarray}
\label{widehat-eqn}
\sB(D;{\sS}) &\rightarrow&\sB(S^2;\sS,\{p_\infty\})
\\
\beta &\mapsto& \what{\beta}.\nonumber
\end{eqnarray}
For example, $\what{\beta}_{m,n}$ and $\what{\sigma}_{m,n}$ are shown in 
\fullref{spherical-fig} with the strand associated to $p_\infty$ drawn on the right. 
\begin{figure}[htbp]
\labellist\small
\pinlabel {$m$} [b] at 45 173
\pinlabel {$n$} [b] at 118 173
\pinlabel {$m$} [b] at 279 173
\pinlabel {$n$} [b] at 350 173
\pinlabel {(a)} [b] at 60 0
\pinlabel {(b)} [b] at 304 0
\pinlabel {$p_{\infty}$} [b] at 165 170
\pinlabel {$p_{\infty}$} [b] at 400 170
\endlabellist
\begin{center}
\includegraphics[height=1.5in]{\figdir/spherical}
\caption{Images of (a) $\beta_{m,n}$ and (b) $\sigma_{m,n}$ in $\sB(S^2;\sS,\{p_\infty\})$}
\label{spherical-fig}
\end{center}
\end{figure}
For $\beta \in \sB(D;{\sS})$, let $\wwbar{\beta} \in \sB(S^2;{\sS})$ be the image of 
$\what{\beta}$ under the forgetful map:
\begin{eqnarray}
\label{overline-eqn}
 \sB(S^2;\sS,\{p_\infty\}) &\rightarrow&  \sB(S^2;{\sS})\\
\what{\beta} &\mapsto& \wwbar{\beta}\nonumber
 \end{eqnarray} 
The following lemma can be found in the book by Birman \cite{Birman74}.

\begin{lem} 
The map $\sB(D;\sS) \rightarrow \sB(S^2;\sS)$ 
given by composing the maps in \eqref{widehat-eqn} and \eqref{overline-eqn} has kernel 
normally generated by 
$\xi= \sigma_1\sigma_2 \ldots \sigma_{s-1}^2\sigma_{s-2} \ldots \sigma_1$. 
\end{lem}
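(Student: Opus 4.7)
The plan is to use the Fadell--Neuwirth fibration together with the standard presentations of the Artin and sphere braid groups to identify the kernel of the composition. Both $\sB(D;\sS)$ and $\sB(S^2;\sS)$ are generated by the images of $\sigma_1,\ldots,\sigma_{s-1}$, and the composition sends each Artin generator to its spherical counterpart, so it is surjective; thus the task is to identify its kernel.

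For this I would use the Birman exact sequence arising from the Fadell--Neuwirth fibration $S^2 \setminus \sS \to \mathrm{Config}_{s+1}(S^2)/S_s \to \mathrm{Config}_s(S^2)/S_s$:
$$1 \longrightarrow \pi_1(S^2 \setminus \sS,\, p_\infty) \longrightarrow \sB(S^2;\sS,\{p_\infty\}) \longrightarrow \sB(S^2;\sS) \longrightarrow 1,$$
together with the standard isomorphism $\sB(D;\sS) \cong \sB(S^2;\sS,\{p_\infty\})$ given by the capping map which adds a constant strand at the collapsed boundary. These combine to identify the kernel of our composition with the free group $\pi_1(S^2\setminus\sS,p_\infty)$ of rank $s-1$, generated by meridional loops $\gamma_1,\ldots,\gamma_{s-1}$ around $p_1,\ldots,p_{s-1}$ (with the relation $\gamma_1\cdots\gamma_s = 1$ eliminating $\gamma_s$).

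Next I would verify that $\xi$ represents one of these meridional classes. Reading off $\xi = \sigma_1\sigma_2\cdots\sigma_{s-1}^2\sigma_{s-2}\cdots\sigma_1$ geometrically shows that it has trivial underlying permutation, and that up to pure-braid isotopy strand $1$ is pushed past strands $2,\ldots,s-1$, loops once around strand $s$, and returns along the same path; under the capping and forgetting maps this is precisely the meridional loop $\gamma_s^{\pm 1}$ based at $p_\infty$ (equivalently, $(\gamma_1\cdots\gamma_{s-1})^{\mp 1}$).

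To conclude, I would show that the normal closure of $\xi$ in $\sB(D;\sS)$ hits all meridional conjugacy classes and hence exhausts the entire free kernel. Concretely, conjugating $\xi$ by an appropriate product of the generators $\sigma_j$ replaces the role of strand $s$ with any chosen strand $j$, producing a meridian around $p_j$; since the $\gamma_i$ generate the kernel, their normal closure is the whole kernel. The main obstacle is this last transitivity check --- verifying geometrically that braid conjugation acts transitively on meridional conjugacy classes --- which, though elementary, is a picture argument that is cleanest to carry out as in Birman's book.
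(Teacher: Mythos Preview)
The paper does not give its own proof of this lemma; it simply cites Birman's book. So there is no detailed argument in the paper to compare against, and your task is really to produce a correct proof sketch.

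Your outline contains a genuine error. You assert a ``standard isomorphism $\sB(D;\sS) \cong \sB(S^2;\sS,\{p_\infty\})$ given by the capping map,'' but this map is \emph{not} an isomorphism. Concretely, for $s=2$ one has $\sB(D,2)\cong\mathbb{Z}$, whereas $\sB(S^2;\{p_1,p_2\},\{p_\infty\})$ is the index--$3$ subgroup of $\sB(S^2,3)$, a group of order $12$, so the target has order $4$. In general the capping map sits in the long exact sequence of the fibration $\mathrm{Config}_s(S^2\setminus\{p_\infty\})/S_s \to \mathrm{Config}_{s+1}(S^2)/S_s \to S^2$, and the boundary homomorphism $\pi_2(S^2)\to\sB(D;\sS)$ is nonzero: its image is the central subgroup generated by $\Delta^4=(\sigma_1\cdots\sigma_{s-1})^{2s}$, the square of the full twist. (One checks $\Delta_s^2$ maps to the order--$2$ central element $\Delta_{s+1}^2$ of $\sB(S^2,s+1)$, using $\Delta_{s+1}^2 = \Delta_s^2\cdot(\sigma_s\cdots\sigma_1^2\cdots\sigma_s)$ together with the sphere relation.)

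Your argument can be repaired: after identifying the kernel of the forgetful map $\sB(S^2;\sS,\{p_\infty\})\to\sB(S^2;\sS)$ with (the image of) $\pi_1(S^2\setminus\sS)$ via your first exact sequence, you must additionally show that the extra element $\Delta^4$ already lies in the normal closure of $\xi$ in $\sB(D;\sS)$. This is true (e.g.\ because $\Delta^2$ has order $2$ in $\sB(S^2,s)$), but it is a separate verification you have not addressed. Alternatively, the classical route in Birman's book (following Fadell--Van~Buskirk) avoids this detour entirely by comparing the Artin presentation directly with the known presentation of $\sB(S^2,s)$, where the only additional relation is precisely $\xi=1$.
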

For example,  $\beta_{m,n}$ and $\sigma_{m,n}$ shown in \fullref{SmnBmn-fig} 
differ by a conjugate of $\xi$, and hence  we have the following. 

\begin{prop}  
The braids $\beta_{m,n}$ and $\sigma_{m,n}$ satisfy 
$\wbar{\beta}_{m,n}= \wbar{\sigma}_{m,n}$. 
\end{prop}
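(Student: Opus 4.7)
The plan is to read off the result directly from the preceding lemma. The text immediately before the proposition already asserts that $\sigma_{m,n}$ and $\beta_{m,n}$ differ in $\sB(D;\sS)$ by a conjugate of $\xi$, so there exist $\eta \in \sB(D;\sS)$ and $\varepsilon \in \{\pm 1\}$ with
$$\sigma_{m,n} = \eta\, \xi^{\varepsilon}\, \eta^{-1}\, \beta_{m,n}.$$
By the lemma, the normal closure of $\xi$ is exactly the kernel of the composition
$$\sB(D;\sS) \longrightarrow \sB(S^2;\sS,\{p_\infty\}) \longrightarrow \sB(S^2;\sS),$$
so $\eta \xi^{\varepsilon} \eta^{-1}$ maps to the identity under this composition. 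Applying $\beta \mapsto \wbar{\beta}$ to the displayed identity then yields $\wbar{\sigma}_{m,n} = \wbar{\beta}_{m,n}$.

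The only step that actually needs checking is the geometric claim that $\sigma_{m,n}\beta_{m,n}^{-1}$ is a conjugate of $\xi^{\pm 1}$. By the definition of $\sigma_{m,n}$ and by inspection of \fullref{SmnBmn-fig}, this braid represents a full counter-clockwise loop of the rightmost strand about the remaining $s-1 = m+n$ strands, with all other strands held fixed. I would verify by drawing the braid and reading off its word that such a loop is realized by an expression of the form $(\sigma_{s-1}\sigma_{s-2}\cdots\sigma_1)(\sigma_1\sigma_2\cdots\sigma_{s-1})$, which is conjugate in $\sB(D;\sS)$ to $\xi = \sigma_1\sigma_2\cdots\sigma_{s-1}^2\sigma_{s-2}\cdots\sigma_1$ by a cyclic permutation of the word. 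Geometrically, this is simply the statement that, after relabelling strands, looping the rightmost strand once around the others is the same braid as looping the leftmost strand once around them, and $\xi$ encodes the latter loop.

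No real obstacle is anticipated: the genuine mathematical content sits in the preceding lemma (quoted from Birman), and the proposition is a one-line corollary once the ``conjugate of $\xi$'' identification is made precise. The only slightly delicate point is keeping track of the sign $\varepsilon$ coming from the \emph{counter-clockwise} convention in the definition of $\sigma_{m,n}$, but this plays no role in the final conclusion since both $\xi$ and $\xi^{-1}$ lie in the same normal subgroup.
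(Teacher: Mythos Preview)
Your proposal is correct and follows essentially the same approach as the paper: the proposition is stated there as an immediate consequence of the preceding lemma together with the observation (made in the sentence just before the proposition) that $\beta_{m,n}$ and $\sigma_{m,n}$ differ by a conjugate of $\xi$. Your additional verification that the rightmost-strand loop is conjugate to $\xi$ simply fills in a detail the paper leaves to the figure.
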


The final lemma of this section deals with notation.  

\begin{lem}
\label{invariance-lem} 
Let $\sS_1$ and $\sS_2$ be finite subsets of $\mbox{int}(F)$ with the same cardinality, 
and $h \colon\thinspace F \rightarrow F$ any homeomorphism taking $\sS_1$ to $\sS_2$. 
Then conjugation by $h$ defines an isomorphism 
$\sB(F;\sS_1) \rightarrow \sB(F;\sS_2)$. 
\end{lem}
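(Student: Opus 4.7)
The plan is to lift $h$ to a self-homeomorphism $\tilde h = h \times \mathrm{id}_I$ of $F \times I$, and then define the desired map by pushing each braid strand forward through $\tilde h$. Specifically, given a braid representative $\beta = (f_{p_1},\dots,f_{p_s})$ in $\sB(F;\sS_1)$, I set $\Phi_h(\beta)$ to be the braid whose strand based at $h(p_i) \in \sS_2$ is $\tilde h \circ f_{p_i}$. The verification that $\Phi_h(\beta)$ satisfies the braid axioms (B1)--(B4) is immediate: $\tilde h$ is a bijection, it respects the level structure $F \times \{t\}$ because it is a product with $\mathrm{id}_I$, and it sends $\sS_1 \times \{0,1\}$ bijectively to $\sS_2 \times \{0,1\}$.

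Next I would check that $\Phi_h$ descends to isotopy classes. Given an ambient isotopy $H_t$ of $F \times I$ fixing $\partial F \times I$ pointwise and realizing an equivalence of two representatives in $\sB(F;\sS_1)$, the conjugated family $\tilde h \circ H_t \circ \tilde h^{-1}$ is again an ambient isotopy of $F \times I$; it fixes $\partial F \times I$ pointwise because any self-homeomorphism of $F$ preserves $\partial F$ setwise, so on boundary points the factors of $\tilde h$ and $\tilde h^{-1}$ cancel around $H_t|_{\partial F \times I} = \mathrm{id}$. Hence $\Phi_h$ is well-defined on $\sB(F;\sS_1)$, and it is a group homomorphism because concatenation of braids stacks fiberwise in $I$ while $\tilde h$ acts fiberwise in the $F$ direction. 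Finally, the map $\Phi_{h^{-1}}$ is a two-sided inverse to $\Phi_h$, so $\Phi_h$ is a group isomorphism.

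The argument is pure bookkeeping and I do not expect any real obstacle. The one point worth flagging is the boundary condition in the equivalence relation: for surfaces with boundary (the disk case used in the paper) this is handled by the observation that any self-homeomorphism of $F$ preserves $\partial F$ as a set; for $F = S^2$ the boundary is empty and the condition is vacuous.
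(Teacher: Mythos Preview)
Your argument is correct and is the standard elementary verification. The paper itself does not supply a proof of this lemma; it is stated without proof and used only to justify the notation $\sB(F,s)$, so there is nothing to compare against beyond noting that your approach is exactly the expected one.
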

In light of \fullref{invariance-lem} 
if $s$ is the cardinality of $\sS$, we will write $\sB(F,s)$ for $\sB(F;\sS)$. 

\subsection{Mapping class groups}
\label{mappings-section}

For any closed orientable surface $F$ and a finite subset $\sS \subset F$ of marked points, 
let $\sM(F;\sS)$ be the group of isotopy classes of 
orientation preserving homeomorphisms of $F$ set-wise preserving $\sS$.   
The Thurston--Nielsen classification states that any homeomorphism of a surface 
is isotopic to one of three types, which we describe below. 

A map $\Phi \colon\thinspace F \rightarrow F$ set-wise preserving $\sS$ is defined to be {\it periodic} 
if some power of $\Phi$ equals the identity map; 
and {\it reducible} if there is a $\Phi$--invariant closed $1$--submanifold whose complementary components in $F \setminus \sS$ have negative Euler characteristic.   
A mapping class $\phi \in \sM(F;\sS)$ is {\it periodic} (respectively,  {\it reducible}) 
if it contains a representative that is periodic (respectively, reducible).  

Before defining the  third type of mapping class, we will make some preliminary definitions. 
A {\it singular foliation} $\sF$  on $F$ with respect to $\sS$ 
is a partition of $F$ into a union of real intervals $(-\infty,\infty)$ and $[0,\infty)$ 
called {\it leaves} such that for each point $x \in F$,  the foliation $\sF$ near $x$ has 
one of the following types in a local chart around $x$: 
\begin{enumerate}
\item[(F1)] 
$x \in F$ is a {\it regular point} (we will also say a {\it $2$--pronged point}) of 
$\sF$ (\fullref{prongs-fig1}(a)). 
\item[(F2)] 
$x \in F$ is an {\it $n$--pronged singularity of $\sF$} (\fullref{prongs-fig1}(b),(c)), 
where $n \ge 1$ if $x \in  \sS$,  and $n \ge 3$ if $x \in F \setminus \sS$. 
\end{enumerate}
Two singular foliations $\sF^{+}$ and $\sF^{-}$ with respect to $\sS$ are {\it transverse} if 
they have the same set of singularities $\mathcal{S}'$ 
and if  the leaves of $\sF^{+}$ and $\sF^{-}$ intersect transversally on $F \setminus \mathcal{S}'$. 

\begin{figure}[htbp]
\begin{center}
\labellist\small
\pinlabel {$x$} [l] <1pt,1.5pt> at 146 679
\pinlabel {$x$} [l] <.5pt,.5pt> at 251 685
{\hair 1.5pt
\pinlabel {$x$} [tl] at 449 686 }
\pinlabel {(a)\qua $n=2$} [b] at 155 590
\pinlabel {(b)\qua $n=1$} [b] at 300 590
\pinlabel {(c)\qua $n=3$} [b] at 450 590
\endlabellist
\includegraphics[width=3in]{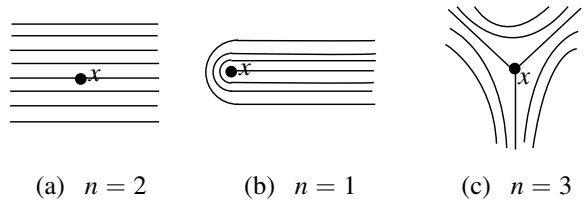}
\caption{Local picture of a singular foliation}
\label{prongs-fig1}
\end{center}
\end{figure}

A path $\alpha$ on $F$ is a {\it transverse arc} relative to a singular foliation $\sF$ with respect to $\sS$ 
if $\alpha$ intersects the leaves of $\sF$ transversely. 
 Two transverse arcs $\alpha_0$ and $\alpha_1$ relative to $\sF$ are {\it homotopic} if 
there is a homotopy $\alpha \colon\thinspace I \times I \rightarrow F$ 
such that $\alpha (I \times  0) = \alpha_0$, $\alpha (I \times 1) = \alpha_1$, and 
for all $t \in I$,  $\alpha(t \times I)$ is contained in a leaf of ${\mathcal F}$. 
We say that $\mu$ is a {\it transverse measure} 
on a singular foliation $\sF$ with respect to $\sS$  
if $\mu$ defines a non-negative Borel measure $\mu(\alpha)$ 
on each transverse arc $\alpha$  with the following two properties: 
\begin{enumerate}
\item[(M1)] 
If $\alpha'$ is a subarc of $\alpha$, then $\mu(\alpha')=\mu(\alpha)|_{\alpha'}$. 
\item[(M2)] 
If transverse arcs $\alpha_0$ and $\alpha_1$ relative to $\sF$ are homotopic, 
then $\mu(\alpha_0) = \mu(\alpha_1)$. 
\end{enumerate}
A pair $({\mathcal F}, \mu)$ satisfying (M1) and (M2) is called a {\it measured foliation}. 
Given a measured foliation $(\sF,\mu)$ and a number $\lambda>0$, 
$(\sF, \lambda \mu)$ denotes the measured foliation whose 
leaves are the same as those of $\sF$ such that 
the measure of each transverse arc $\alpha$  relative to ${\mathcal F}$ 
is given by $\lambda \mu(\alpha)$. 
For a homeomorphism $f \colon\thinspace F \rightarrow F$ set-wise preserving $\sS$, 
 $(\sF', \mu')= f({\mathcal F}, \mu)$ is the measured foliation whose leaves  are the images 
of leaves of ${\mathcal F}$ under $f$, 
and the measure $\mu'$ on each arc $\alpha$ transverse to $\sF'$ 
is given by $\mu(f^{-1}(\alpha))$. 

A map $\Phi \colon\thinspace F \rightarrow F$ set-wise preserving $\sS$ is  {\it pseudo-Anosov} if 
there is a number $\lambda >1$ and a pair of transverse measured foliations 
$(\mathcal{F}^\pm, \mu_\pm)$ such that 
$\Phi(\mathcal{F}^\pm, \mu_\pm) = (\mathcal{F}^\pm, \lambda^{\pm 1} \mu_\pm)$. 
The number $\lambda = \lambda(\Phi)$ is called the {\it dilatation} of $\Phi$, and 
$\mathcal{F}^-$ and $\sF^{+1}$ are called the {\it stable} and {\it unstable foliations}  
or the {\it invariant foliations} associated to $\Phi$. 
A mapping class $\phi \in \sM(F;\sS)$ is {\it pseudo-Anosov} if $\phi$ is 
the isotopy class of a pseudo-Anosov map $\Phi$.  
In this case, the dilatation of $\phi$ is  defined to be $\lambda(\phi) = \lambda(\Phi)$. 

\begin{thm} [Thurston--Nielsen Classification Theorem]  
Any element $\phi \in \sM(F;\sS)$ 
is either periodic, reducible or pseudo-Anosov.  
Furthermore, if $\phi$ is  pseudo-Anosov, then the pseudo-Anosov representative of 
$\phi$ is unique up to conjugacy.
\end{thm}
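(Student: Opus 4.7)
The plan is to follow Thurston's original proof via the action of the mapping class group on Teichm\"uller space and its boundary. After disposing of the low-complexity cases in which $\sM(F;\sS)$ is already finite (for instance $F = S^2$ with few marked points), we may assume $\chi(F \setminus \sS) < 0$, so that $F \setminus \sS$ admits a complete finite-area hyperbolic structure. I would then work in the Teichm\"uller space $\sT = \sT(F;\sS)$ together with its Thurston compactification $\overline{\sT} = \sT \cup \sP\sM\sF$ by projective classes of measured foliations on $(F,\sS)$. This compactification is homeomorphic to a closed ball, and the natural action of $\phi \in \sM(F;\sS)$ on $\sT$ extends continuously to $\overline{\sT}$.

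First I would apply Brouwer's fixed-point theorem to obtain a fixed point $x \in \overline{\sT}$. If $x \in \sT$, then $\phi$ preserves a hyperbolic structure and has a representative in the (finite) isometry group of that structure, so $\phi$ is periodic. Otherwise $x$ is represented by a measured foliation $(\sF,\mu)$ with $\phi(\sF,\mu) = (\sF, \lambda\mu)$ for some $\lambda > 0$. If $\lambda = 1$, or if $(\sF,\mu)$ has a cylinder component or closed leaf, then the cores of those components form a $\phi$--invariant essential multicurve, witnessing that $\phi$ is reducible. Otherwise $\lambda \ne 1$, and applying the same argument to $\phi^{-1}$ produces a second projectively invariant measured foliation $(\sF^-,\mu^-)$ with scaling factor $\lambda^{-1}$. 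Using continuity and $\phi$--equivariance of the geometric intersection pairing $i(\cdot,\cdot)$ on $\sM\sF$, one shows that $[\sF^-]$ and $[\sF]$ are distinct projective classes and that the two foliations are genuinely transverse; a local surgery argument then straightens a representative of $\phi$ so that it stretches $\sF^{\pm}$ by $\lambda^{\pm 1}$ with $\lambda > 1$, producing the pseudo-Anosov structure.

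For uniqueness of the pseudo-Anosov representative, I would argue that the projective classes $[\sF^\pm]$ are the attracting and repelling fixed points of the north--south dynamics of $\phi$ on $\sP\sM\sF$, and hence depend only on the class $\phi$; the dilatation $\lambda$ is then determined by how $\phi$ scales any arc transverse to $\sF^+$. Two pseudo-Anosov representatives $\Phi_0$ and $\Phi_1$ of $\phi$ therefore preserve the same pair of invariant foliations with the same scaling factor, so they are isotopic through maps preserving these foliations, and a unique-ergodicity argument promotes such an isotopy to a conjugacy. The hard part will be the boundary analysis, in particular ruling out the degenerate scenario in which the two invariant foliations share leaves or fail to be transverse: this is precisely the dichotomy that separates genuine pseudo-Anosov behavior from reducibility, and it requires the full machinery of the intersection pairing on $\sM\sF$ rather than any purely combinatorial argument.
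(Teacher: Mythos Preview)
The paper does not prove this theorem; it is stated in \fullref{prelim-section} as a classical background result, with pointers to Fathi--Laudenbach--Poenaru \cite{FLP} and Casson--Bleiler \cite{CB88} for complete expositions. Your outline is a reasonable sketch of Thurston's original argument via the action on the Thurston compactification of Teichm\"uller space, and that is indeed the proof one finds in those references, so there is nothing substantive to compare.
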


As with braids, for any partition $\sS = \sS_1 \cup  \cdots \cup \sS_r$, there is a subgroup 
$$
\sM(F;\sS_1,\dots,\sS_r) \subset \sM(F;\sS)
$$
that preserves each $\sS_i$ set-wise.  
There is a natural map 
$$
\sM(F;\sS_1,\dots,\sS_r) \rightarrow \sM(F;\sS_1,\dots,\sS_{r-1})
$$
called the {\it forgetful map}.  
For pseudo-Anosov mapping classes $\phi$, 
$\log(\lambda(\phi))$  can be interpreted as the minimal topological 
entropy among all representatives of $\phi$ (see
Fathi--Laudenbach--Poenaru \cite{FLP}).  
We thus have the following inequality on dilatations. 

 \begin{lem}
 \label{closure1-lem}  
 Let $\phi \in \sM(F;\sS_1,\dots,\sS_r)$, and $\psi \in \sM(F;\sS_1,\dots,\sS_{r-1})$ 
 the image of $\phi$  under the forgetful map.  
 If $\phi$ and $\psi$ are both pseudo-Anosov, then $\lambda(\phi) \ge \lambda(\psi)$. 
 \end{lem}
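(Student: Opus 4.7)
The plan is to deduce the inequality directly from the entropy characterization recalled just before the lemma: for a pseudo-Anosov mapping class $\theta$, the quantity $\log\lambda(\theta)$ equals the infimum of the topological entropies $h(\Theta)$ over all homeomorphisms $\Theta$ representing $\theta$, and this infimum is attained by the pseudo-Anosov representative (Fathi--Laudenbach--Poenaru). Given this, the whole argument reduces to comparing the two infima corresponding to $\phi$ and $\psi$.

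First I would observe that every homeomorphism $\Phi\colon F\to F$ representing $\phi\in\sM(F;\sS_1,\dots,\sS_r)$ preserves each $\sS_i$ set-wise, and in particular preserves $\sS_1,\dots,\sS_{r-1}$ set-wise; hence $\Phi$ also represents the image class $\psi\in\sM(F;\sS_1,\dots,\sS_{r-1})$ under the forgetful map. In other words, the set of representatives of $\phi$ is contained (via the forgetful map) in the set of representatives of $\psi$. Since topological entropy is an invariant of the underlying map of $F$ and does not see the marked-point structure, taking infima over a smaller class of maps can only increase the value:
\[
\inf\{h(\Phi):\Phi\text{ represents }\phi\}\ \ge\ \inf\{h(\Psi):\Psi\text{ represents }\psi\}.
\]

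Next I would apply the entropy characterization to both sides. Because $\phi$ is pseudo-Anosov, the left-hand infimum equals $\log\lambda(\phi)$, attained by the pseudo-Anosov representative $\Phi$ of $\phi$. Similarly, because $\psi$ is pseudo-Anosov, the right-hand infimum equals $\log\lambda(\psi)$. Combining the two gives $\log\lambda(\phi)\ge\log\lambda(\psi)$, and exponentiating yields $\lambda(\phi)\ge\lambda(\psi)$, as required.

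The only substantive point, and the one I would be most careful about, is invoking the Fathi--Laudenbach--Poenaru identification of $\log\lambda$ with the infimum of topological entropies across a mapping class; this is the step that makes the naive set-theoretic inclusion of representatives translate into a numerical inequality of dilatations. Everything else is a routine observation that the forgetful map sends representatives to representatives and that entropy is insensitive to the marked-point data.
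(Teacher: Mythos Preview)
Your proposal is correct and follows precisely the approach the paper intends: the paper states, immediately before the lemma, that $\log\lambda(\phi)$ equals the minimal topological entropy over representatives of $\phi$ (citing Fathi--Laudenbach--Poenaru) and then simply asserts the inequality as a consequence. You have spelled out that consequence in full---representatives of $\phi$ form a subset of representatives of $\psi$, entropy is independent of the marked-point data, so the infimum can only decrease under the forgetful map---which is exactly the argument the paper leaves implicit.
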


\begin{lem}
\label{closure2-lem} 
Let $\phi \in \sM(F;\sS_1,\dots,\sS_r)$ be pseudo-Anosov. 
Suppose that the pseudo-Anosov representative $\Phi$ of $\phi$ does not have a $1$--pronged
singularity at any point of $S_r$.  
Let $\psi \in \sM(F;\sS_1,\dots,\sS_{r-1})$ be the image of $\phi$ under the forgetful map. 
Then $\psi$ is pseudo-Anosov and $\lambda(\psi)= \lambda(\phi)= \lambda(\Phi)$. 
\end{lem}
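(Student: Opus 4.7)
The plan is to show that the pseudo-Anosov representative $\Phi$ of $\phi$, reinterpreted as a homeomorphism preserving only the smaller marking $\sS' = \sS_1 \cup \cdots \cup \sS_{r-1}$, still witnesses the pseudo-Anosov property, with the same invariant foliations and the same dilatation.

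First, I would observe that since $\Phi$ set-wise preserves $\sS = \sS_1 \cup \cdots \cup \sS_r$, it certainly set-wise preserves $\sS'$, and its isotopy class in $\sM(F;\sS')$ equals $\psi$ (this is essentially the definition of the forgetful map on representatives). The candidate invariant pair for $\psi$ is the existing measured foliation pair $(\sF^{\pm},\mu_{\pm})$, so the transformation law $\Phi(\sF^{\pm},\mu_{\pm}) = (\sF^{\pm}, \lambda^{\pm 1}\mu_{\pm})$ is already in hand and transfers verbatim, once we know the foliations still qualify as singular foliations in the sense of (F1), (F2), (M1), (M2) relative to $\sS'$.

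The second step, and the main content, is checking that $(\sF^{\pm},\mu_{\pm})$ remain singular measured foliations with respect to the smaller set $\sS'$. The transverse measure axioms (M1), (M2) are conditions on transverse arcs in $F$ and are insensitive to which subset of singularities has been marked, so they carry over. Condition (F1) is local and likewise unaffected. The only place trouble could occur is condition (F2) at the points of $\sS_r$, which are now unmarked: at each such $x$ the local model must be either regular or an $n$-pronged singularity with $n \geq 3$. Since a $2$-pronged ``singularity'' is by convention a regular point, the only forbidden local model at an unmarked point is the $1$-pronged one, which is exactly what the hypothesis excludes. So $(\sF^{\pm},\mu_{\pm})$ is a transverse pair of measured foliations with respect to $\sS'$ as well.

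The third step is to conclude. Combining the previous two steps, $\Phi$ is a pseudo-Anosov map set-wise preserving $\sS'$ with dilatation $\lambda = \lambda(\Phi) = \lambda(\phi)$, so the class $\psi$ is pseudo-Anosov. By the uniqueness clause of the Thurston--Nielsen classification, $\Phi$ itself is the (unique up to conjugacy) pseudo-Anosov representative of $\psi$, giving $\lambda(\psi) = \lambda(\Phi) = \lambda(\phi)$. The main ``obstacle'' is really just the bookkeeping in the second step: everything reduces to checking one foliation axiom at the unmarked points, where the no-$1$-prong hypothesis is tailor-made. No calculation, no novel construction, and no appeal to the more delicate part of the Thurston--Nielsen classification (existence of a pseudo-Anosov representative from scratch) is needed.
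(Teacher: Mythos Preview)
Your proof is correct and follows essentially the same approach as the paper: verify that the invariant foliations $(\sF^{\pm},\mu_{\pm})$ remain admissible singular foliations with respect to the smaller marking $\sS' = \sS_1\cup\cdots\cup\sS_{r-1}$, the only obstruction being a $1$--pronged singularity at a point of $\sS_r$, which the hypothesis rules out. The paper's proof is terser but structurally identical; your explicit bookkeeping of which axioms are affected (only (F2) at the unmarked points) is a welcome elaboration.
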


\begin{proof} 
Let $\sF^{\pm}$ be singular foliations with respect to $\sS_1\cup\dots\cup\sS_r$, and 
$(\sF^\pm,\mu_\pm)$ a pair of transverse measured foliations associated to $\Phi$. 
Since $\sF^{\pm}$ does not have $1$--pronged singularities at points of $S_r$,  
$\sF^{\pm}$ give well-defined singular foliations with respect to $\sS_1\cup\dots\cup\sS_{r-1}$. 
Thus, $\Phi$ is a pseudo-Anosov representative of $\psi$, and 
hence $\lambda(\psi)= \lambda(\phi)= \lambda(\Phi)$. 
\end{proof}

As in the case of braids, changing the location 
of the points in $\sS$ by a homeomorphism does not change the group $\sM(F;\sS)$. 

\begin{lem}
\label{invarianceM-lem}
Let $\sS_1$ and $\sS_2$ be two finite subsets of $F$ with the same cardinality, 
and $h \colon\thinspace F \rightarrow F$ any homeomorphism taking $\sS_1$ to $\sS_2$. 
Then conjugation by $h$ defines an isomorphism 
$\sM(F;\sS_1) \rightarrow \sM(F;\sS_2)$. 
\end{lem}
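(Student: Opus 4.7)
The plan is to construct the isomorphism explicitly via conjugation, and then check each of the required properties (well-definedness, homomorphism, bijectivity) in turn. Since the statement is a direct analog of \fullref{invariance-lem} for braids, the argument will parallel that case with very little change.

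First I would define the map. Given $\phi \in \sM(F;\sS_1)$, pick a representative homeomorphism $\Phi\colon F \to F$ with $\Phi(\sS_1) = \sS_1$, and send $\phi$ to the isotopy class of $h \Phi h^{-1}$. Because $h(\sS_1) = \sS_2$, the conjugate $h \Phi h^{-1}$ set-wise preserves $\sS_2$, and since conjugation by any homeomorphism preserves the property of being orientation-preserving, $h \Phi h^{-1}$ indeed represents an element of $\sM(F;\sS_2)$.

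Next I would verify well-definedness. If $\Phi$ and $\Phi'$ represent the same class in $\sM(F;\sS_1)$, they are joined by an isotopy $\Phi_t$ through orientation-preserving homeomorphisms each set-wise preserving $\sS_1$. Then $h \Phi_t h^{-1}$ is an isotopy through orientation-preserving homeomorphisms set-wise preserving $\sS_2$, so $h \Phi h^{-1}$ and $h \Phi' h^{-1}$ represent the same class in $\sM(F;\sS_2)$. The homomorphism property $h (\Phi \Psi) h^{-1} = (h \Phi h^{-1})(h \Psi h^{-1})$ is immediate.

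Finally I would exhibit the inverse. The homeomorphism $h^{-1}\colon F \to F$ takes $\sS_2$ to $\sS_1$, so by the same construction it induces a map $\sM(F;\sS_2) \to \sM(F;\sS_1)$. The compositions in either direction are conjugation by $h^{-1} h = \mathrm{id}$ and $h h^{-1} = \mathrm{id}$, yielding the identity on each side, which shows that the conjugation map is an isomorphism. There is really no serious obstacle here; the one small point to remember is that orientation-preservation of representatives is preserved under conjugation by an arbitrary (not necessarily orientation-preserving) homeomorphism $h$, which is why no extra hypothesis on $h$ is needed.
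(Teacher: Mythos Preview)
Your argument is correct. The paper states this lemma without proof, treating it as an elementary observation (just as it does for the analogous \fullref{invariance-lem} for braids), so there is nothing to compare against; your write-up supplies exactly the routine verification one would expect.
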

If $F$ has genus--$g$, and $\sS$ has cardinality $s$, we will also write $\sM_g^s = \sM(F;\sS)$. 

The theory of mapping class groups on closed surfaces extends to mapping class groups on 
surfaces with boundary.  Let $F^b$ be a compact orientable surface with $b$ boundary 
components, and $\sS \subset \mathrm{int}(F^b)$ a finite set.  
Define $\sM(F^b;\sS)$ to  be the group of isotopy classes 
of orientation preserving homeomorphisms  of $F^b$ set-wise preserving $\sS$ and the boundary components. 
A {\it singular foliation} $\sF$ on $F^b$ with respect to the set of marked points $\sS$ 
is a partition of $F$ into a union of leaves such that each point $x \in \mbox{int}(F)$ has a local chart satisfying one of the conditions (F1), (F2), and 
each boundary component has $n$--prongs for some $n \ge 1$.  
\fullref{prongs-fig2} illustrates representative leaves of a singular foliation with a $1$--pronged (\fullref{prongs-fig2}(a)) and $3$--pronged (\fullref{prongs-fig2}(b)) singularity. 
{\it Periodic}, {\it reducible} and {\it pseudo-Anosov} mapping  classes 
are defined as for the case of closed surfaces using this definition of singular foliations. 

\begin{figure}[htbp]
\begin{center}
\labellist\small
\pinlabel {(a)} [b] at 135 575
\pinlabel {(b)} [b] at 366 575
\endlabellist
\includegraphics[width=2in]{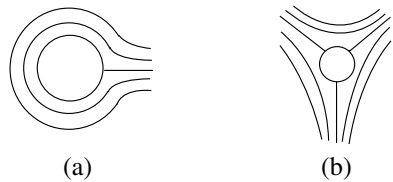}
\caption{Leaves of a singular foliation near a boundary component}
\label{prongs-fig2}
\end{center}
\end{figure}

Let 
\begin{equation}
\label{contraction-map}
c  \colon\thinspace F^b \rightarrow \overline{F^b}
\end{equation}
be the continuous map, where $\overline{F^b}$ is the closed surface obtained 
from $F^b$ by contracting $b$ boundary components to points $q_1,\dots,q_b$. 
As before, we will write $\sS$ for $c(\sS)$. 
Let $\sQ = \{q_1,\dots,q_b\}$.   The above definitions imply the following. 

\begin{lem}
\label{boundary-lem}  
The contraction map $c$ in \eqref{contraction-map} induces an isomorphism 
$$
c_*  \colon\thinspace \sM(F;\sS) \rightarrow \sM(\overline{F^b}; \sS,\sQ),
$$
which preserves the Thurston--Nielsen types of mapping classes. 
Furthermore, if $\sF$ is a singular foliation defined on $F$ which is $n$--pronged along 
a boundary component $A$ of $F$,  then the image of $\sF$ under 
$c_*$ has an $n$--pronged singularity at $c_*(A)$. 
\end{lem}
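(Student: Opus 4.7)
The plan is to verify, in turn, that $c_*$ is a well-defined homomorphism, then that it is injective and surjective, and finally to read off the preservation of Thurston--Nielsen types from the correspondence of invariant structures (reducing multicurves and singular foliations). The last clause about prongs will be an ingredient in the pseudo-Anosov case, but is essentially a local calculation.

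First, note that a homeomorphism $\Phi\colon F^b \to F^b$ preserving $\sS$ setwise and permuting the boundary components respects the equivalence relation defining $c$, and so descends to a homeomorphism $\overline{\Phi}\colon \overline{F^b} \to \overline{F^b}$ preserving $\sS \cup \sQ$ setwise; an isotopy through such maps descends to an isotopy. This gives a well-defined group homomorphism $c_*$. For surjectivity, take $\overline{\Psi} \in \sM(\overline{F^b};\sS,\sQ)$; after isotopy relative to $\sS \cup \sQ$ we may assume $\overline{\Psi}$ sends a small disk $U_i$ about each $q_i$ to a small disk about $\overline{\Psi}(q_i)$ by a map which is smooth (or conformal) at $q_i$. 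Removing the open disks $U_i$ and identifying the resulting boundary circles with a collar of $\partial F^b$ produces a lift $\Psi \in \sM(F^b;\sS)$ with $c_*([\Psi]) = [\overline{\Psi}]$. For injectivity, if $c_*([\Phi]) = 1$ one isotopes $\overline{\Phi}$ to the identity through maps preserving $\sS \cup \sQ$; after a further isotopy (using that $\mathrm{Homeo}^+(D^2,0)$ deformation retracts onto $\mathrm{SO}(2)$) we may assume each $\overline{\Phi}_t$ is a rigid rotation in a neighborhood of each $q_i$, and this isotopy lifts, via the collar identification, to an isotopy from $\Phi$ to the identity in $\sM(F^b;\sS)$.

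Preservation of Thurston--Nielsen type then follows. Periodicity of $\phi$ is equivalent to periodicity of $c_*(\phi)$ since $c_*$ is an isomorphism. Reducibility is preserved in both directions: a reducing $1$--submanifold in $F^b$ pushes forward to a reducing $1$--submanifold in $\overline{F^b}$ (the complementary components gain marked points at the $q_i$, but retain negative Euler characteristic), and a reducing multicurve in $\overline{F^b}$ can be isotoped off $\sQ$ and pulled back to $F^b$. For the pseudo-Anosov case one pushes and pulls the invariant measured foliations $(\sF^\pm, \mu_\pm)$ using the last clause of the lemma.

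The final clause is a local computation near a boundary component $A$. In an annular collar of $A$, an $n$-pronged singular foliation meets $A$ along $n$ arcs whose union with $A$ partitions the collar into $n$ sectors of leaves. Under $c$, the circle $A$ collapses to the point $c(A) = q_i$ and the $n$ boundary-incident arcs become $n$ prongs at $q_i$, so the pushforward of $\sF$ is an $n$-pronged singular foliation at $c_*(A)$; transverse measures pass through $c$ since $c$ is a homeomorphism on the complement of $\partial F^b$, and the invariance equation $\Phi(\sF^\pm,\mu_\pm) = (\sF^\pm, \lambda^{\pm 1}\mu_\pm)$ descends.

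The principal obstacle is the rigorous lifting of isotopies in the injectivity and surjectivity arguments, where one must control behavior near the collapsed points to guarantee a lift to a boundary-preserving homeomorphism of $F^b$; once the standard fact that homeomorphisms fixing a point are locally isotopic to rotations is invoked, the remainder of the proof reduces to transferring invariant structures across $c$.
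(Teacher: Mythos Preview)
The paper does not actually give a proof of this lemma; it simply prefaces the statement with ``The above definitions imply the following'' and moves on. Your argument is correct and supplies the details the paper leaves implicit. Two remarks: first, the paper's convention is that elements of $\sM(F^b;\sS)$ preserve boundary components setwise rather than pointwise, which is precisely what makes the injectivity step go through (boundary Dehn twists are already trivial in this group, so the rotation argument you invoke near each $q_i$ faces no obstruction); second, your local prong calculation matches the paper's pictures in \fullref{prongs-fig2} and is exactly the mechanism by which the pseudo-Anosov case is transferred across $c_*$.
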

The isomorphism $c_*$ given in \fullref{boundary-lem} is handy in discussing 
mapping  classes coming from braids. 
Let $F$ be either $D$ or $S^2$.  
There is a natural homomorphism 
\begin{eqnarray}
\label{braidmonodromy-eqn} 
\sB(F;\sS)& \rightarrow& \sM(F;\sS)
\\
\beta &\mapsto& \phi_\beta \nonumber
\end{eqnarray}
defined as follows.  
Let $D_1,\dots,D_{s-1} \subset \mbox{int}(D)$ be disks with 
$D_i \cap D_j = \emptyset$ for $i \ne j$ such that $D_i$ 
contains two points $p_{i}$ and $p_{i+1}$ of $\sS$ and no other points of $\sS$. 
The action of a generator $\sigma_i$ of $\sB(F;\sS)$ 
is the mapping class in $\sM(F;\sS)$ that fixes the exterior of $D_i$ 
and rotates a closed line segment connecting $p_{i}$ and $p_{i+1}$ in $D_i$ by 
$180$ degrees in the counter-clockwise direction as in \fullref{braidmap-fig}. 

\begin{figure}[htbp]
\labellist\tiny
\pinlabel {$1$} [b] at 4 39
\pinlabel {$i{-}1$} [b] at 47 39
\pinlabel {$i$} [b] at 82 39
\pinlabel {$i{+}1$} [b] at 126 39
\pinlabel {$i{+}2$} [b] at 152 42
\pinlabel {$s$} [b] at 212 42
\endlabellist
\begin{center}
\includegraphics[width=2in]{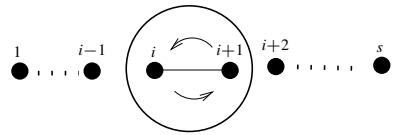}
\caption{Action of $\sigma_i$ as a homeomorphism of $F$}
\label{braidmap-fig}
\end{center}
\end{figure}

Given a braid $\beta \in \sB(D;\sS)$, let 
$\what{\beta}$ be its image in $\sB(S^2;\sS,\{p_\infty\})$ as in \eqref{widehat-eqn}.  
Then $c_*$  satisfies $c_*(\phi_\beta) = \phi_{\what{\beta}}$. 

The following useful lemma can be found in the book by Birman \cite{Birman74}.

\begin{lem}  
\label{braidcenter-lem}
If $\sS$ has cardinality $s$, then the kernel of the map 
\begin{eqnarray*}
\sB(D;\sS)& \rightarrow& \sM(S^2;\sS,\{p_\infty\})
\\
\beta &\mapsto& \phi_{\what{\beta}}
\end{eqnarray*}
is the center $Z(\sB(D;\sS))$ generated by a full twist braid 
$\Delta=(\sigma_1\ldots \sigma_{s-1})^s$. 
\end{lem}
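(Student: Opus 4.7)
The plan is to prove both inclusions: $Z(\sB(D;\sS)) \subseteq \ker$ and $\ker \subseteq Z(\sB(D;\sS))$. I would first recall the classical fact (due to Chow, as cited by Birman) that the center of the Artin braid group on $s$ strands is infinite cyclic, generated by the full twist $\Delta = (\sigma_1\cdots\sigma_{s-1})^s$. So showing the first inclusion reduces to the single check that $\phi_{\what{\Delta}}$ is trivial in $\sM(S^2;\sS,\{p_\infty\})$.

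For $Z(\sB(D;\sS)) \subseteq \ker$, I would realize $\Delta$ concretely as the homeomorphism of $D$, compactly supported in a sub-disk $D' \supset \sS$, that performs a full $2\pi$ rotation of $D'$. Under the contraction map $c\colon D \to S^2$ sending $\partial D$ to $p_\infty$, the complement $S^2 \setminus D'$ becomes a disk containing only $p_\infty$. The full twist on $D'$ can be undone on $S^2$ by the reverse full rotation of this complementary disk; concretely, one isotopes through homeomorphisms that rotate the two complementary disks of an equatorial curve in opposite directions. This isotopy keeps $\sS$ setwise preserved throughout (the points in $\sS$ travel along circles inside $D'$, then back to themselves) and fixes a neighborhood of $p_\infty$ setwise, so it is a valid isotopy in $\sM(S^2;\sS,\{p_\infty\})$. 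Hence $\phi_{\what{\Delta}}$ is the identity class.

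For $\ker \subseteq Z(\sB(D;\sS))$, I would first invoke the classical isomorphism identifying $\sB(D;\sS)$ with the mapping class group $\sM(D, \partial D; \sS)$ of isotopy classes of homeomorphisms of $D$ that preserve $\sS$ setwise and fix $\partial D$ pointwise (this is the standard definition of the disk braid group via mapping classes). Then the composition in question factors as
\[
\sB(D;\sS) \;\xrightarrow{\cong}\; \sM(D, \partial D;\sS) \;\xrightarrow{c_*}\; \sM(S^2;\sS,\{p_\infty\}),
\]
where $c_*$ is induced by the contraction map. The kernel of $c_*$ is computed by the capping exact sequence (a close relative of the Birman exact sequence): capping off $\partial D$ by a disk centered at $p_\infty$ kills exactly the Dehn twist $T_{\partial D}$ about a curve parallel to $\partial D$. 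Under the braid-group isomorphism, $T_{\partial D}$ corresponds to the full twist $\Delta$ (one way to see this is to note that the $s$--fold rotation of $D'$ drags a small loop around $\partial D'$ once around itself, producing the same action on a transverse arc as the boundary Dehn twist). Therefore $\ker = \langle \Delta \rangle = Z(\sB(D;\sS))$.

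The main technical obstacle is identifying the boundary Dehn twist $T_{\partial D}$ with the full-twist braid $\Delta$ and establishing the capping exact sequence in this setting where we track both the marked points $\sS$ and the special point $p_\infty$. One must check that when $\partial D$ is collapsed, an isotopy of $S^2$ trivializing the image can always be adjusted so that $p_\infty$ is fixed setwise, which is where the partition structure on $\sM(S^2;\sS,\{p_\infty\})$ plays a role; once this framework is set up carefully, the identification of the generator of the kernel with $\Delta$ reduces to a local computation in a collar of $\partial D$.
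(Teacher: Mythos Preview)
The paper does not give a proof of this lemma; it simply attributes the result to Birman's book. Your argument is correct and follows the standard line one finds in that reference (or, in more modern form, in Farb--Margalit's \emph{Primer}): identify $\sB(D;\sS)$ with the mapping class group of the disk rel boundary, then use the capping exact sequence to see that collapsing $\partial D$ to $p_\infty$ kills exactly the boundary Dehn twist, which corresponds to $\Delta$.

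One small point worth tightening: in your first inclusion, the cleanest way to trivialize $\phi_{\what\Delta}$ is to observe that $\Delta$ acts on $D$ as the Dehn twist $T_c$ about a curve $c$ parallel to $\partial D$, and on $S^2$ this $c$ bounds a disk $D''$ containing only $p_\infty$. The isotopy from $T_c$ to the identity can then be taken to be supported entirely in $D''$ (a cone-off of the rotation centered at $p_\infty$), so it fixes $\sS$ pointwise and $p_\infty$ pointwise throughout. Your description has the points of $\sS$ moving along circles, which is not wrong but unnecessarily complicates the verification that the isotopy respects the marked-point structure.
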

We say that $\beta \in \sB(D;\sS)$ is {\it periodic} (respectively,  {\it reducible}, {\it pseudo-Anosov}), 
 if $\phi_{\what{\beta}} \in \sM(S^2;\sS,\{p_\infty\})$ is periodic 
 (respectively,  reducible, pseudo-Anosov). 
 In the pseudo-Anosov case, we set 
 $\lambda(\beta) = \lambda(\what{\beta}) =  \lambda(\phi_{\what{\beta}})$.   
 
Let $\wbar{\beta}$ be the image of $\what{\beta}$ in $\sB(S^2;\sS)$ 
under the forgetful map  in \eqref{overline-eqn}.  
Then \fullref{closure1-lem} implies that if $\what{\beta}$ and $\wbar{\beta}$ are 
pseudo-Anosov, we have 
$\lambda(\beta) = \lambda(\what{\beta}) \ge \lambda(\wbar{\beta})$, 
and by \fullref{closure2-lem}, 
the equality holds if $p_{\infty}$ is not a $1$--pronged singularity for 
the invariant foliations associated to the pseudo-Anosov representative of
$\phi_{\what{\beta}}$. 

\subsection{The braid spectrum}
\label{spectrum-section}

For any subset $\Gamma \subset \sM_g^s$, let $\Sigma(\Gamma)$ be the set of logarithms 
of dilatations coming from pseudo-Anosov elements of $\Gamma$.   
For any subset $\Gamma \subset \sB(D,s)$, define $\Sigma(\Gamma)$ in a similar way. 
Let $\wwhat{\sB}(D,s) \subset \sM_0^{s+1}$ be the image of $\sB(D,s)$ 
under the map in \fullref{braidcenter-lem},  
and ${\wwhat{\sB}}_{\mathrm{pA}}(D,s)$ the set of pseudo-Anosov elements
of $\wwhat{\sB}(D,s)$. 

\begin{prop}
\label{spectrum-prop} 
For $g \ge 1$, 
$$\Sigma (\sB(D,2g+1)) = \Sigma (\wwhat{\sB}(D,2g+1)) 
\subset \Sigma(\sM_{g,\mathrm{hyp}}) \subset \Sigma(\sM_g).$$
\end{prop}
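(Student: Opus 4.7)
The plan is to establish the three links of the chain separately, with the middle inclusion via the hyperelliptic double cover carrying the substance of the argument.

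For the equality $\Sigma(\sB(D,2g+1)) = \Sigma(\wwhat{\sB}(D,2g+1))$: by the conventions fixed after \fullref{braidcenter-lem}, a braid $\beta$ is pseudo-Anosov precisely when $\phi_{\what{\beta}}$ is, and in that case $\lambda(\beta) = \lambda(\phi_{\what{\beta}})$. Since $\wwhat{\sB}(D,2g+1)$ is literally the image of $\sB(D,2g+1)$ under $\beta \mapsto \phi_{\what{\beta}}$, the two spectra coincide as subsets of the positive reals.

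For $\Sigma(\wwhat{\sB}(D,2g+1)) \subset \Sigma(\sM_{g,\mathrm{hyp}})$, I would realize every $\lambda = \lambda(\phi_{\what{\beta}})$ as the dilatation of a hyperelliptic class by lifting through the double cover branched at the $2g+2$ marked points. Pick a pseudo-Anosov representative $\Phi$ of $\phi_{\what{\beta}}$ and realize $F_g$ as the double cover $p \colon F_g \to S^2$ branched exactly over $\sS \cup \{p_\infty\}$, with nontrivial deck transformation the hyperelliptic involution $\iota$. Since $\Phi$ preserves the branch locus, it admits two lifts $\tilde{\Phi} \colon F_g \to F_g$; a standard deck-group argument (both $\tilde{\Phi}\iota$ and $\iota\tilde{\Phi}$ are lifts of $\Phi$, so they differ by an element of $\{1,\iota\}$, and the alternative $\tilde{\Phi}\iota = \tilde{\Phi}$ would force $\iota=\mathrm{id}$) shows each lift commutes with $\iota$, so the isotopy class of $\tilde{\Phi}$ lies in $\sM_{g,\mathrm{hyp}}$. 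Pull back the invariant measured foliations $(\sF^\pm,\mu_\pm)$ to $(\tilde{\sF}^\pm,\tilde{\mu}_\pm)$ on $F_g$. Away from the branch set, $p$ is a local diffeomorphism and the data transfer verbatim. At a branch point, the local model $w \mapsto w^2$ applied to the underlying quadratic differential sends a $k$-pronged singularity to a $2k$-pronged one; so $1$-pronged downstairs singularities become regular points upstairs, regular downstairs foliations at branch points produce $4$-pronged singularities upstairs, and every downstairs $k \ge 3$ becomes $2k \ge 6$ upstairs. Hence every upstairs singularity is either regular or has at least $3$ prongs, which is the correct condition on the closed surface $F_g = F_g^0$ without marked points. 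Transversality and the axioms (M1), (M2) transfer through the cover, and the intertwining $p \circ \tilde{\Phi} = \Phi \circ p$ combined with the paper's convention for the action on measures gives $\tilde{\Phi}(\tilde{\sF}^\pm,\tilde{\mu}_\pm) = (\tilde{\sF}^\pm,\lambda^{\pm 1}\tilde{\mu}_\pm)$, so $\tilde{\Phi}$ is pseudo-Anosov with dilatation $\lambda$.

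The final inclusion $\Sigma(\sM_{g,\mathrm{hyp}}) \subset \Sigma(\sM_g)$ is immediate from the containment $\sM_{g,\mathrm{hyp}} \subset \sM_g$.

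The main obstacle is the middle step, and within it the verification that the pulled-back foliations are \emph{legal} singular foliations on $F_g$ viewed as an element of $\sM_g = \sM_g^0$: that is, the exclusion of $1$- and $2$-pronged singularities off the (now-forgotten) branch set, and the validation of $\ge 3$-pronged singularities at it. The prong-count formula $k \mapsto 2k$ at branch points is exactly what rules these out. Once the foliations are known to be legal, the dilatation identity $\lambda(\tilde{\Phi}) = \lambda(\Phi)$ follows formally from naturality of pullback under the equivariant map $p$.
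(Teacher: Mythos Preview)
Your proof is correct and follows essentially the same route as the paper: both lift the pseudo-Anosov representative through the hyperelliptic double cover of $S^2$ branched over the $2g+2$ marked points and verify that the lifted foliations remain pseudo-Anosov data with the same dilatation. The only organizational difference is that the paper first lands in $\sM(F_g;\wwhat{\sS}')$ (keeping the preimages of the branch points as marked points) and then invokes \fullref{closure2-lem} to forget them, using the observation that prong orders at the branch preimages are divisible by the covering degree and hence not $1$-pronged; you instead work directly on the unmarked closed surface and carry out the prong-doubling analysis $k\mapsto 2k$ explicitly via the local model $w\mapsto w^2$. Your route is slightly more self-contained, while the paper's route recycles a lemma already in hand; the content is the same.
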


\begin{proof}  
By using \fullref{braidcenter-lem}, it is easy to see that 
$\Sigma (\sB(D,2g+1)) = \Sigma (\wwhat{\sB}(D,2g+1))$. 

Let $\sS \subset$ int$(D)$ be a subset of $2g+1$ points, and $\wwhat{\sS} = \sS \cup \{p_\infty\}$. 
Let $F$ be the double cover of $S^2$ branched along  $\wwhat{\sS}$.   
Then $F$ has genus--$g$.  We will define a set map 
$$
\wwhat{{\sB}}_{\mathrm{pA}}(D,2g+1) \rightarrow \mathcal{M}_g
$$
whose image consists of hyperelliptic elements which preserves dilatation. 
Let $\phi \in \wwhat{{\sB}}_{\mathrm{pA}}(D,2g+1)$.  
Then $\phi$ has a pseudo-Anosov representative homeomorphism $\Phi$ 
that is unique up to conjugacy.   
Let $\Phi'$ be its lift to $F$ by the covering $F \rightarrow S^2$ 
with invariant foliations given by the lifts of the invariant foliations associated to $\Phi$.   
Then $\Phi'$ is pseudo-Anosov with the same dilatation as $\Phi$.  
Let $\phi'$ be its isotopy class. 
Then $\phi'$ defines a hyperelliptic, pseudo-Anosov mapping class in
$\sM(F;{\wwhat{\sS}}')$ 
with the same dilatation as $\phi$, 
where  ${\wwhat{\sS}}'$ is the preimage of $\wwhat{\sS}$ in $F$. 

Now consider the forgetful map $\sM(F;\wwhat{\sS'}) \rightarrow \sM(F;\emptyset) = \mathcal{M}_g$.  
The invariant foliations associated to $\Phi'$ 
have prong orders at $\wwhat{\sS}'$ that are divisible by the degree of the covering 
$F \rightarrow S^2$. 
Thus, the singularities of $\Phi'$ at $\wwhat{\sS}'$ are all even--pronged. 
It follows that by \fullref{closure2-lem}, the image of $\phi'$ under the forgetful map 
is pseudo-Anosov and has the same dilatations as $\phi'$. 
\end{proof}

\fullref{spectrum-prop} immediately implies the following. 

\begin{cor}
\label{spectrum-cor} 
For $g \ge 1$, $\delta (\sM_g) \leq \delta (\sM_{g,\mathrm{hyp}}) \leq  \delta (\sB(D,2g+1))$. 
\end{cor}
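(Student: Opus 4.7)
The plan is to deduce the corollary directly from \fullref{spectrum-prop} by passing from containments of spectra to inequalities of their infima. Recall from the introduction that for any subset $\Gamma$ of $\sM_g^s$ or of $\sB(D,s)$, $\lambda(\Gamma)$ is defined as the least dilatation among pseudo-Anosov elements of $\Gamma$ and $\delta(\Gamma) = \log \lambda(\Gamma)$. Equivalently, $\delta(\Gamma) = \inf \Sigma(\Gamma)$, where $\Sigma(\Gamma)$ is the set of logarithms of dilatations of pseudo-Anosov elements of $\Gamma$, as introduced in \fullref{spectrum-section}.

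First I would invoke \fullref{spectrum-prop}, which supplies the chain of set-theoretic inclusions
$$\Sigma(\sB(D,2g+1)) \subset \Sigma(\sM_{g,\mathrm{hyp}}) \subset \Sigma(\sM_g).$$
Next, I would apply the elementary fact that if $A \subset B$ are subsets of $\mathbb{R}$, then $\inf B \le \inf A$. Applying this to each of the two inclusions above yields
$$\delta(\sM_g) \le \delta(\sM_{g,\mathrm{hyp}}) \le \delta(\sB(D,2g+1)),$$
which is the desired statement.

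There is really no serious obstacle here: the content of the corollary is already packaged into the proposition, and the passage from set inclusions to infimum inequalities is immediate. The only minor point to check is that the infima are actually attained (or at least that the inequalities hold as stated), but since $\delta$ is defined to be $\log$ of the least dilatation and these spectra are known to be well-defined (and nonempty for $g \ge 2$, with the $g = 1$ case causing no issue because we are only comparing infima, which may be taken over nonempty sets once pseudo-Anosov examples are known to exist), no further argument is needed.
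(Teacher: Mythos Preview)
Your proposal is correct and is exactly the approach the paper takes: the paper simply states that \fullref{spectrum-prop} ``immediately implies'' the corollary, and your argument is the obvious unpacking of that implication via infima of nested spectra.
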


\subsection{Criterion for the pseudo-Anosov property}
\label{Bestvina-Handel-section}

What follows is a criterion for determining when $\beta \in \sB(D;\sS)$ is pseudo-Anosov 
(see Bestvina--Handel \cite{BH94}). 

Let $G$ be a finite graph embedded on an orientable surface $F$, possibly with self-loops, but 
no vertices of valence $1$ or $2$.  
Let  $\DE(G)$ be the set of oriented edges of $G$, 
 $\sE^{\mathrm{tot}}(G)$ the set of unoriented edges, and $\mathcal{V}(G)$ the set of vertices. 
 For $e \in \DE(G)$,  let $i(e)$ and $t(e)$ be the initial vertex and the terminal vertex respectively, 
and $\wbar{e}$  the same edge with opposite orientation. 
An {\it edge path} $\tau$ on $G$ is an oriented path $\tau=e_1\ldots e_\ell$, 
where $e_1,\dots,e_\ell \in \DE(G)$ satisfies $t(e_i) = i(e_{i+1})$ for $i=1,\dots,\ell-1$.  
One can associate a {\it fibered surface} $\mathcal{F}(G) \subset F$ 
with a projection $\pi \colon\thinspace  \mathcal{F}(G) \rightarrow G$ (\fullref{fig_fiber-surface}). 
The fibered surface $\mathcal{F}(G)$ is decomposed into arcs and into polygons modelled on 
$k$--junctions for $k \ge 1$. 
The arcs and the $k$--junctions are called  {\it decomposition elements}. 
Under $\pi$, the preimage of the vertices of valence $k$ of $G$ is the $k$--junctions, and 
the preimage of the edges of $G$ is the {\it strips} fibered by arcs, 
which are complementary components of the set of all junctions of $\mathcal{F}(G)$.

\begin{figure}[htbp]
\label{fig_vector-ab}
\labellist\small
\pinlabel {3--junction} [b] at 18 148
\pinlabel {arc} [bl] at 88 120
\pinlabel {$\pi$} [l] at 49 66
\pinlabel {$\pi$} [l] at 156 66
\pinlabel {1--junction} [l] at 209 133
\pinlabel {strip} [b] at 156 137
\endlabellist
\begin{center}
\includegraphics[width=2.5in]{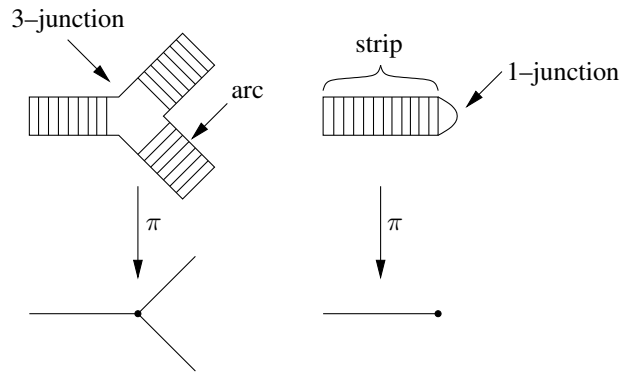}
\caption{Fibered surface}
\label{fig_fiber-surface}
\end{center}
\end{figure}

Let $G$ and $H$ be finite graphs embedded on $F$, and 
$f \colon\thinspace F \rightarrow F$ a homeomorphism. 
Assume that  $f$ maps each decomposition element of $\mathcal{F}(G)$  
into a decomposition element of $\mathcal{F}(H)$, 
and each junction of $\mathcal{F}(G)$ into a junction of $\mathcal{F}(H)$. 
Then $f$ induces a graph map $\g \colon\thinspace G \rightarrow H$ which sends 
vertices of $G$ to vertices of $H$, and each edge of $G$ to an edge path of $H$. 
Under this assumption with the case $G=H$, we say that $\mathcal{F}(G)$ {\it carries} $f$. 

Let $V^{\mathrm{tot}}(G)$ be the vector space 
of formal sums $\sum_{i=1}^n a_i e_i,$ 
where $a_i \in \R$ and $e_i \in \sE^{\mathrm{tot}}(G)$.  
Any edge path on $G$ determines an element of $V^{\mathrm{tot}}(G)$ by 
treating each oriented edge as an unoriented edge with coefficient 1, regardless of orientation. 
For a graph map $\g \colon\thinspace G \rightarrow H$, 
define the {\it  transition matrix}  for $\g$ to be the transformation 
$$\sT_\g^{\mathrm{tot}} \colon\thinspace V^{\mathrm{tot}}(G) \rightarrow V^{\mathrm{tot}}(H)$$
taking each $e \in \sE^{\mathrm{tot}}(G)$ to 
$\g(e)$ considered as an element of $V^{\mathrm{tot}}(H)$.  

We  now  restrict to the case $F = D$. 
Let ${\sS}= \{p_1,\dots,p_s\} \subset \mbox{int}(D)$ be a set of marked points, and 
$P_i$ a small circle centered at $p_i$ whose interior disk does not contain any other points of $\sS$. 
We set $P= \bigcup_{i=1}^s P_i$. 
Choose a finite graph $G$ embedded on $D$ 
that is homotopy equivalent to $D \setminus \sS$ such that $P$ is a subgraph of  $G$.  
Given $\beta \in \sB(D;\sS)$, suppose that a fibered surface $\mathcal{F}(G)$ carries some 
homeomorphim $f$ of $\phi_{\beta} \in \sM(D; \sS)$. 
Then the graph map $\g \colon\thinspace G \rightarrow G$, 
called the {\it induced graph map for}  $\phi_{\beta}$, 
preserves $P$ set-wise. 
Let pre$P$ be the set of edges $e \in \sE^{\mathrm{tot}}(G)$  
such that $\g^k(e)$ is contained in $P$ for some $k \ge 1$. 
By the definition of pre$P$, the transition matrix ${\sT}_\g^\mathrm{tot}$ has the following form: 
$$
{\sT}_\g^{\mathrm{tot}}= 
\left(\begin{array}{ccc}\mathcal{ P}& \mathcal{A} & \mathcal{B} \\0 & \mathcal{Z} & \mathcal{C} \\0 & 0 & \sT\end{array}\right), 
$$
where $\mathcal{P}$ and $\mathcal{Z}$ are the transition matrices  associated to 
$P$ and pre$P$ respectively, and 
$\sT$ is the transition matrix associated to the rest of edges $\sE(G)$ called {\it real edges}. 
Let $V(G)$ be the subspace of $V^{\mathrm{tot}}(G)$ spanned by $\sE(G)$. 
The matrix $\sT$ is the restriction of ${\sT}_\g^\mathrm{tot}$ to $V(G)$ and is called 
the {\it transition matrix with respect to the real edges}. 
The spectral radius of $\sT$ is denoted by $\lambda(\sT)$.  

Given a graph map $\g \colon\thinspace G \rightarrow G$, define the {\it derivative} 
$D_\g \colon\thinspace \DE(G) \rightarrow \DE(G)$ as follows: 
For  $e \in \DE(G)$, write $\g(e) = e_1e_2 \ldots e_\ell$, where $e_i \in \DE(G)$. 
The image of $e$ under $D_\g$ is defined by the initial edge $e_1$. 

A graph map $\g \colon\thinspace G \rightarrow G$ is {\it efficient} 
if for any $e \in \DE(G)$ and any $k \ge  0$, 
$\g^k(e) = e_1 e_2 \ldots e_j$ satisfies $D_\g(\wbar{e}_i) \neq D_\g(e_{i+1})$ 
for all $i=1,\dots,j-1$.
We also say in this case that $\g^k$ has no {\it back track} for any $k \ge 0$. 

A nonnegative square matrix  $M$ is {\it irreducible} 
if for every set of indices $i,j$, there is an integer $n_{i,j} > 0 $ such that the $(i,j)$th entry of 
$M^{n_{i,j}}$ is strictly positive.  

\begin{thm}[Bestvina--Handel \cite{BH94}]
\label{BH-thm}
Let $\beta \in \sB(D; \sS)$, and $\g\co G \rightarrow G$ the induced graph map for $\phi_{\beta}$.   
Suppose that

{\rm(BH1)}\qua $\g$ is efficient, and \\
{\rm(BH2)}\qua the transition matrix $\sT$ with respect to the real edges is 
irreducible with $\lambda(\mathcal{T})>1$. 

Then $\beta$ is pseudo-Anosov with dilatation equal to $\lambda(\sT)$. 
\end{thm}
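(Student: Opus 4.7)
The plan is to construct the invariant measured foliations for a pseudo-Anosov representative of $\phi_\beta$ directly from the graph map $\g\co G \to G$, using the fibered surface $\mathcal{F}(G)$ as scaffolding, and then verify that its dilatation equals $\lambda(\sT)$. The essential input is the Perron--Frobenius theorem: since $\sT$ is a nonnegative irreducible matrix, it admits a positive right eigenvector $\mathbf{w}$ and a positive left eigenvector $\mathbf{v}$ with common eigenvalue $\lambda = \lambda(\sT) > 1$, and these eigenvectors provide the transverse measures for the two foliations.

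First I would construct the unstable foliation $\sF^{+}$. Using the right eigenvector $\mathbf{w}$, I would assign to each real edge $e \in \sE(G)$ a width $w_e>0$ and thicken the corresponding strip of $\mathcal{F}(G)$ into a rectangle of that width, attached to the $k$--junctions so that the junction widths match the sum of incident strip widths. The leaves of $\sF^{+}$ in each rectangle are the fibers of the strip projection $\proj$, and they piece together across junctions; the singularities of $\sF^{+}$ sit at the junctions (yielding $k$--pronged singularities) and along the peripheral circles $P_i$ (yielding prongs at the punctures $p_i$). Because $\g$ sends each real edge $e$ to an edge path whose $\mathbf{w}$-weight is exactly $\lambda w_e$, the homeomorphism $f$ carried by $\mathcal{F}(G)$ multiplies the transverse measure of $\sF^{+}$ by $\lambda$. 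The role of the pre-$P$ and $P$ blocks of ${\sT}_\g^{\mathrm{tot}}$ is purely to record the prong data at the marked points; since they are absorbed into $P$ under iteration, they contribute no eigenvalue larger than $\lambda(\sT)$.

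Next I would construct the stable foliation $\sF^{-}$ using the left eigenvector $\mathbf{v}$ to assign lengths $\ell_e = v_e$ to the real edges, producing a foliation whose leaves run along the strips and whose transverse measure is multiplied by $\lambda^{-1}$ under $f$. Here efficiency of $\g$ is indispensable: the condition $D_\g(\wbar e_i) \ne D_\g(e_{i+1})$ for all iterates means that no two consecutive edges in any $\g^k$-image fold back on one another, so the formally defined leaves of $\sF^{-}$ never collapse under iteration and match up transversely to those of $\sF^{+}$ across junctions. Once $\sF^{\pm}$ are verified to be transverse measured foliations invariant under $f$ with $f(\sF^{\pm},\mu_\pm) = (\sF^{\pm},\lambda^{\pm 1}\mu_\pm)$, the definition in Section~\ref{mappings-section} gives that $f$ is pseudo-Anosov with dilatation $\lambda$; since $f$ is by construction a representative of $\phi_\beta$, this proves the theorem.

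The main technical obstacle will be the gluing of the rectangles at the junctions and at the peripheral circles so that the result is actually a measured foliation on $D$ rather than just a structure on $\mathcal{F}(G)$: one must check that the collapsing map $\mathcal{F}(G) \to G$ can be inverted into a homotopy equivalence-with-foliation, and that efficiency propagates to all iterates to rule out cancellation that would degrade the invariant structure. Controlling the behavior on the pre-$P$ edges and showing that they do not introduce singularities with fewer than the required number of prongs at the points of $\sS$ is the subtlest point; once that is settled the Perron--Frobenius computation and the identification $\lambda(\phi_\beta) = \lambda(\sT)$ follow formally.
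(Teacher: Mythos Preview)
The paper does not prove this theorem at all: it is stated as a result of Bestvina and Handel and cited to \cite{BH94}, with no argument given. So there is no ``paper's own proof'' to compare your proposal against; the authors treat it as a black box from the train-track literature and immediately move on to applying it.

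That said, your sketch is a reasonable outline of the actual Bestvina--Handel argument from \cite{BH94}: use Perron--Frobenius on the irreducible transition matrix $\sT$ to get positive left and right eigenvectors, use these as transverse measures to build a pair of transverse measured foliations on a bigon track obtained from the fibered surface, and use efficiency to guarantee that iterates of $\g$ never fold, so that the induced map genuinely expands one measure by $\lambda(\sT)$ and contracts the other by $\lambda(\sT)^{-1}$. One point to be careful about: your description of which eigenvector gives widths versus lengths depends on whether the $(i,j)$ entry of $\sT$ counts occurrences of $e_i$ in $\g(e_j)$ or of $e_j$ in $\g(e_i)$, and you should check this against the convention actually in force. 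Also, the passage from an invariant bigon track with measures to a bona fide pair of transverse singular foliations on $D$ (collapsing complementary regions, verifying the prong counts at $\sS$ and along $\partial D$, and confirming the resulting map is a homeomorphism rather than just a homotopy equivalence) is nontrivial and is where most of the work in \cite{BH94} lies; your last paragraph correctly flags this but does not resolve it. If you intend to include a self-contained proof, you will need to fill in those steps or cite \cite{BH94} for them, as the present paper does.
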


It is not hard to check that the criterion of \fullref{BH-thm} behaves well under conjugation
of maps.  For the case of braids, this yields the following. 

\begin{lem}
\label{conj-lem}  
Let $\alpha_1 \in \sB(D;\sS)$. 
Suppose that a fibered surface $\mathcal{F}(G)$ carries a homeomorphism $f \in \phi_{\alpha_1}$, 
and let $\g_1 \colon\thinspace G \rightarrow G$ be the induced graph map for $\phi_{\alpha_1}$. 
We now consider a conjugate braid $\alpha_2$ with $\alpha_2 = \gamma \alpha_1 \gamma^{-1}$, 
and we take any homeomorphism $h \in \phi_{\gamma}$. 
Then a fibered surface $\mathcal{F}(h(G))$ carries a homeomorphism 
$hfh^{-1} \in \phi_{\alpha_2}$, and hence 
$hfh^{-1}$ induces a graph map $\g_2\co h(G) \rightarrow h(G)$, 
which is the induced graph map for $\phi_{\alpha_2}$. 
If $\g_1$ satisfies  {\rm(BH1)} and {\rm(BH2)}, then $\g_2$ also satisfies  {\rm(BH1)} and {\rm(BH2)}. 
\end{lem}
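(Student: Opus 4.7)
The plan is to exploit the fact that conjugation by the homeomorphism $h$ transports every piece of structure appearing in the Bestvina--Handel criterion---graph, fibered surface, decomposition elements, derivative, and transition matrix---from the pair $(G,f)$ to the pair $(h(G), hfh^{-1})$. Because (BH1) and (BH2) are formulated purely in terms of this transported data, they will be preserved automatically once the naturality of each ingredient under $h$ is verified.

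First I would verify the hypotheses. Since $\sB(D;\sS)\to\sM(D;\sS)$ is a group homomorphism, $\phi_{\alpha_2}=\phi_\gamma\phi_{\alpha_1}\phi_\gamma^{-1}$, so $hfh^{-1}$ is an honest representative of the isotopy class $\phi_{\alpha_2}$. Because $h$ is a homeomorphism of $D$, the image $h(\mathcal{F}(G))$ is a fibered surface for the graph $h(G)$ whose $k$--junctions, arcs, and strips are exactly the $h$--images of the corresponding decomposition elements of $\mathcal{F}(G)$, with projection $h\circ\pi\circ h^{-1}$. Since $f$ sends each decomposition element of $\mathcal{F}(G)$ into another, the conjugated map $hfh^{-1}$ sends each decomposition element of $\mathcal{F}(h(G))$ into another, and hence $\mathcal{F}(h(G))$ carries $hfh^{-1}$. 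Writing $h_*\colon G\to h(G)$ for the bijection induced by $h$ on vertices and oriented edges, one obtains $\g_2 = h_*\circ\g_1\circ h_*^{-1}$; moreover $h_*$ sends the subgraph $P$ of circles around $\sS$ to the corresponding subgraph of $h(G)$ of circles around $h(\sS)$.

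Next I would verify (BH1) and (BH2) for $\g_2$. For (BH1) the derivative $D_\g$ is defined purely via the initial oriented edge of an edge path, a notion preserved by $h_*$, so $D_{\g_2}=h_*\circ D_{\g_1}\circ h_*^{-1}$. Hence any back track in $\g_2^k=h_*\circ\g_1^k\circ h_*^{-1}$ would transport through $h_*^{-1}$ to a back track in $\g_1^k$, contradicting efficiency of $\g_1$. For (BH2), under $h_*$ the partition of $\sE^{\mathrm{tot}}(G)$ into $P$--edges, pre$P$--edges, and real edges maps bijectively to the analogous partition of $\sE^{\mathrm{tot}}(h(G))$, because $h_*(P)$ is the $P$--subgraph of $h(G)$ and pre$P$ is characterized by eventual $\g$--images landing in $P$. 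Consequently the transition matrix $\sT_2$ of $\g_2$ on the real edges of $h(G)$ agrees, after relabeling via $h_*$, with the matrix $\sT$ of $\g_1$, so it is irreducible and $\lambda(\sT_2)=\lambda(\sT)>1$.

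No deep obstacle arises: the lemma is essentially the observation that the Bestvina--Handel data is natural with respect to homeomorphisms of $D$. The only mild bookkeeping is checking that $h_*$ respects the splitting of $\sE^{\mathrm{tot}}(G)$ into $P$, pre$P$, and real edges, which follows from the conjugation relation $\g_2 = h_*\g_1 h_*^{-1}$ together with $h_*(P)$ being the $P$--subgraph of $h(G)$. Once this is tracked through, efficiency, irreducibility, and the inequality $\lambda(\sT)>1$ transfer verbatim.
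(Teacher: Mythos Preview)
Your proof is correct and supplies exactly the details the paper omits. In the paper the lemma is stated without proof, preceded only by the remark ``It is not hard to check that the criterion of \fullref{BH-thm} behaves well under conjugation of maps''; your argument---transporting the fibered-surface data, the derivative, and the $P$/pre$P$/real-edge partition through $h_*$, and observing that the transition matrix on real edges is the same up to relabeling---is the natural verification of that remark.
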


Let $\phi \in \sM(D;\sS)$ be pseudo-Anosov, and $\g \colon\thinspace G \rightarrow G$ 
the induced graph map for $\phi$ satisfying satisfying {\rm(BH1)} and {\rm(BH2)}.  
We construct an associated {\it train track} obtained by {\it graph smoothing} given  as follows: 
Let $\mathcal{E}_v \subset \DE(G)$ be the set of oriented edges of $G$ 
emanating from a vertex $v$.  For $e_1,e_2 \in \mathcal{E}_v$, 
$e_1$ and $e_2$  are  {\it equivalent}  if $D_\g^k(e_1)= D_\g^k(e_2)$ for some $k \ge 1$. 
A {\it gate} is an  equivalence class in $\sE_v$. 
The  train track $\tau_\g$ associated to $\g$ is constructed using the following steps: 

\textbf{Step T1}\qua
Deform each pair of equivalent edges $e_i,e_j \in \mathcal{E}_v$ in a small neighborhood of $v$ 
so that $e_i$ and $e_j$ are tangent at $v$. 

\textbf{Step T2}\qua
Insert  a small disk $N_v$ at each vertex $v$. 
For each gate $\gamma$, assign a point $p(\gamma)$ on the boundary of $N_v$. 

\textbf{Step T3}\qua
If, for some edge $e$ of $G$ and some $k \ge 1$, 
$\g^k(e)$ contains consecutive edges $\wbar{e}_je_{\ell}$ $(e_j,e_{\ell} \in {\mathcal E}_v)$ 
such that $\gamma_j= [e_j]$ and $\gamma_{\ell}= [e_{\ell}]$ with $\gamma_j \ne \gamma_{\ell}$, then 
join $p(\gamma_j)$ and $p(\gamma_{\ell})$ by a smooth arc in $N_v$ 
satisfying the following: 
The arc intersects the boundary of $N_v$ transversally at $p(\gamma_j)$ 
and $p(\gamma_{\ell})$, and no two such arcs intersect in the interior
of $N_v$.

For example, let $v$ be the initial vertex of four edges $e_1,e_2,e_3,e_4$.  
Assume that there are three gates $\gamma_1=[e_1]=[e_2]$, $\gamma_2=[e_3]$ and 
$\gamma_3 = [e_4]$, 
and that there are edges $f_1$ and $f_2$ of $G$ such that 
$\g^r(f_1) = \ldots \wbar{e}_2e_4 \ldots$ and 
$\g^s(f_2) = \ldots \wbar{e}_3e_4\ldots$ for some $r,s \ge 1$. 
Then \fullref{fig_gate}(a) shows Step T1 
applied to $e_1$ and $e_2$, \fullref{fig_gate}(b) shows Step T2 
applied to $\gamma_1, \gamma_2$ and $\gamma_3$, and \fullref{fig_gate}(c) 
shows Step T3, which yields arcs connecting $p(\gamma_1)$ to $p(\gamma_3)$, and 
$p(\gamma_2)$ to $p(\gamma_3)$.  

\begin{figure}[htbp]
\labellist\small
\hair=1pt
\pinlabel {$e_1$} [tr] at 97 610
\pinlabel {$e_2$} [tl] at 199 610
\pinlabel {$e_3$} [bl] at 199 714
\pinlabel {$e_4$} [br] at 97 714
\pinlabel {$v$} at 148 634
\pinlabel {$N$} at 405 668
\pinlabel {$p(\gamma_1)$} [l] at 405 642
\pinlabel {$p(\gamma_2)$} [tl] at 426 690
\pinlabel {$p(\gamma_3)$} [tr] at 381 690
\pinlabel {(a)} [b] at 224 540
\pinlabel {(b)} [b] at 400 540
\pinlabel {(c)} [b] at 530 540
\endlabellist
\begin{center}
\includegraphics[height=1.3in]{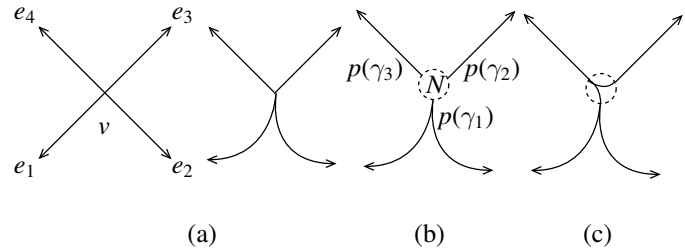}
\caption{Example of a graph smoothing}
\label{fig_gate}
\end{center}
\end{figure}

The arcs constructed in Step T3 are called {\it infinitesimal edges}, 
and the points $p(\gamma)$ which join two infinitesimal edges are 
called {\it cusps} of the train track. 

If $\phi \in \sM(D;\sS)$ is pseudo-Anosov, and $\g \colon\thinspace G \rightarrow G$ is the induced 
graph map for $\phi$ satisfying {\rm(BH1)} and {\rm(BH2)}, then $\tau_\g$ constructed above 
determines the invariant foliations $\sF^\pm$ 
associated to the pseudo-Anosov representative $\Phi$ of $\phi$. 
In particular,  the number of prongs at the singularities of $\sF^\pm$ 
can be found in terms of $\tau_\g$. 
Each connected component $A$ of $D \setminus \tau_\g$ is either homeomorphic to an open 
disk, or is a half-open annulus, one of whose boundaries is the boundary $\partial D$ of $D$.  
In the former case, the boundary of the closure of the connected component is a finite union of edges 
and vertices of $\tau_\g$.   If two of these edges meet at a cusp, then that cusp is said to 
 {\it belong to} $A$.    In the latter case, the closure of $A$ has two boundary components. 
The boundary component which is not $\partial D$ 
is a finite union of edges and vertices of $\tau_\g$, and if two of these edges meet at 
a cusp, we call the cusp an {\it exterior cusp} of $\tau_\g$. 

\begin{lem}
\label{prongs-lem} 
Let $A$ be a connected component of $D \setminus \tau_\g$. 
If $A$ is an open disk, then there is one $k$--pronged singularity of $\sF^\pm$ in $A$, 
where $k$ is the number of cusps of $\tau_\g$ belonging to $A$. 
If $A$ is a half-open annulus, then $\partial D$ is $k$--pronged, 
where $k$ is the number of exterior cusps of $\tau_\g$. 
\end{lem}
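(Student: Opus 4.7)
The plan is to realize the invariant foliations $\sF^\pm$ associated to the pseudo-Anosov representative $\Phi$ of $\phi$ as canonical completions of measured laminations carried by $\tau_\g$, and then read off the singularity data from the cusp combinatorics of the complementary regions. By hypothesis (BH2) and Perron--Frobenius, the matrix $\sT$ has a positive eigenvector with eigenvalue $\lambda(\sT) > 1$; assigning its entries as transverse widths to the real edges turns $\tau_\g$ into a measured train track scaled by $\lambda(\sT)$ under the action induced by $\g$. Thickening $\tau_\g$ to a ribbon neighborhood fibered by transverse ties yields a measured lamination $L^+$, and a parallel construction using the transpose eigenvector yields $L^-$ with stretch factor $\lambda(\sT)^{-1}$.

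The next step is to extend $L^\pm$ across each complementary region of $\tau_\g$. At every cusp of $\tau_\g$ belonging to a region $A$, the two smooth arcs of $\partial A$ that meet there have matching leaf directions, so a singular leaf of $\sF^+$ must emanate into $A$ from the cusp; along the smooth arcs of $\partial A$ away from cusps, leaves of $L^+$ enter $A$ transversely. For a disk region with $k$ cusps, the canonical completion of this boundary data is the standard $k$-prong model: $k$ singular leaves converging to a single $k$-pronged singularity in the interior of $A$. For a half-open annulus region with $k$ exterior cusps, the same model places a $k$-pronged singular structure at the puncture corresponding to $\partial D$.

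The main obstacle is justifying that the completion is forced, rather than allowing extra singularities or more complicated pictures. Here I would use an Euler characteristic argument: a $k$-pronged interior singularity contributes index $1 - k/2$, and the Euler--Poincar\'e formula applied to $D$ forces the singular indices to sum to $\chi(D) = 1$, matched up region by region. Combined with the minimality of $L^\pm$ (which follows from irreducibility of $\sT$) and the efficiency of $\g$ (which rules out spurious prongs created by backtracking), this pins down exactly one $k$-pronged singularity per complementary region, proving both statements of the lemma.
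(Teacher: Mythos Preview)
The paper does not prove this lemma; it is stated without proof as part of the background on the Bestvina--Handel machinery \cite{BH94}, to which the reader is implicitly referred throughout \fullref{Bestvina-Handel-section}. So there is no proof in the paper to compare against.

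Your sketch captures the right construction---realize the lamination from the Perron--Frobenius eigenvector carried by $\tau_\g$ and fill in complementary regions---but the Euler--Poincar\'e argument you invoke at the end does not do the work you assign it. The global index formula only constrains the \emph{sum} of prong contributions over the whole surface; it does not by itself force exactly one singularity per region or match cusps to prongs region by region. The actual mechanism in Bestvina--Handel is constructive rather than obstruction-theoretic: one collapses each complementary polygon of the train track along its tie foliation, and the singular leaves of the resulting foliation are precisely the images of the cusps. A $k$-cusped disk region collapses to a single $k$-prong by the definition of the collapse map, not by an index count. Your appeals to efficiency and irreducibility are also somewhat misplaced here---those hypotheses are what guarantee that $\tau_\g$ is a genuine invariant train track and that the eigenvector gives a nondegenerate transverse measure, but once that is in hand the cusp--prong correspondence is a purely local statement about how the fibered neighborhood is collapsed, requiring no further dynamical input.
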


\section{Main examples}
\label{main-section}

This section contains properties of $\beta_{m,n}$ and $\sigma_{m,n}$.  
In \fullref{symmetry-section}, we show that the Thurston--Nielsen types of 
$\beta_{m,n}$ and $\sigma_{m,n}$ do not depend on the order of $m$ and $n$. 
In \fullref{Graphmap-section}, we find the Thurston--Nielsen types 
of $\beta_{m,n}$ and $\sigma_{m,n}$, and in \fullref{dilatations-section}, 
we compute their dilatations in the pseudo-Anosov cases. 
\fullref{TT-section} gives the train tracks for $\phi_{\beta_{m,n}}$ and $\phi_{\sigma_{m,n}}$ 
In \fullref{Salem-Boyd-section}, 
we apply properties of Salem--Boyd sequences to find the least dilatation 
among $ \lambda(\sigma_{m,n})$ and $\lambda(\beta_{m,n})$  for $m+n = 2g$ fixing $g \ge 2$. 
We also give bounds on these dilatations. 

\subsection{Symmetries of $\beta_{m,n}$ and $\sigma_{m,n}$}
\label{symmetry-section}

Consider the  braid $\beta_{m,n}^+ \in \sB(D;\sS,\{p\},\{q\})$ drawn in \fullref{Bmn-sym-fig}(a). 

\begin{figure}[htbp]
\labellist\small
\pinlabel {$p$} [b] at 5 135
\pinlabel {$m$} [b] at 46 139
\pinlabel {$n$} [b] at 98 139
\pinlabel {$q$} [b] at 130 135
\pinlabel {$p$} [b] at 227 137
\pinlabel {$m$} [b] at 255 141
\pinlabel {$n$} [b] at 305 141
\pinlabel {$q$} [b] at 356 137
\pinlabel {$p$} [b] at 434 139
\pinlabel {$n$} [b] at 473 142
\pinlabel {$m$} [b] at 527 142
\pinlabel {$q$} [b] at 559 135
\pinlabel {(a)} [b] at 80 0
\pinlabel {(b)} [b] at 300 0
\pinlabel {(c)} [b] at 495 0
\endlabellist
\begin{center}
\includegraphics[width=5in]{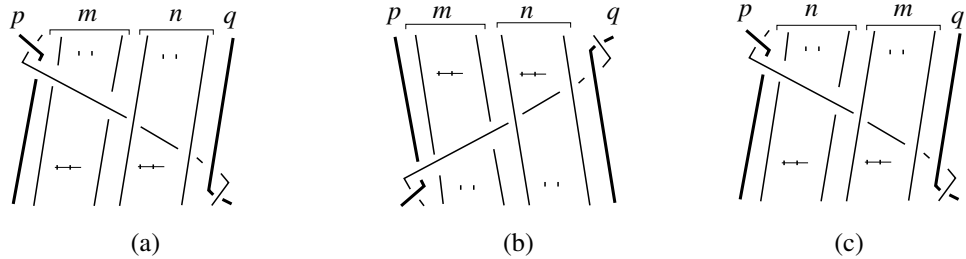}
\caption{Symmetry of $\beta_{m,n}^+$}
\label{Bmn-sym-fig}
\end{center}
\end{figure}

\begin{lem}
\label{Bmnplus-conj-lem} 
The braid $\beta_{n,m}^+$ is conjugate to the inverse of $\beta_{m,n}^+$. 
\end{lem}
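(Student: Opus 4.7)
The plan is to derive the conjugacy by interpreting Figure~\ref{Bmn-sym-fig} as a sequence of diagrammatic moves, and then verifying that the implicit conjugator is a genuine element of $\sB(D;\sS,\{p\},\{q\})$.

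I would start from the diagram of $\beta_{m,n}^+$ in Figure~\ref{Bmn-sym-fig}(a), whose structure is a block of $m$ positive crossings on the left, followed by a block of $n$ negative crossings on the right, all sandwiched between the fixed strands $p$ on the far left and $q$ on the far right. Forming the inverse $(\beta_{m,n}^+)^{-1}$ amounts to reflecting this diagram across a horizontal axis (equivalently, reading it from bottom to top), so each positive crossing flips to a negative one and each negative crossing flips to a positive one. The $m$-block on the left becomes a block of $m$ negative crossings, and the $n$-block on the right becomes a block of $n$ positive crossings; this is picture (b).

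Next, I would conjugate the resulting braid by a half-twist $\gamma$ on the $m+n$ middle strands, viewed as an element of $\sB(D;\sS,\{p\},\{q\})$. Because this half-twist fixes $p$ and $q$ setwise, $\gamma$ really does lie in the subgroup preserving the partition $\sS \cup \{p\} \cup \{q\}$. Geometrically, conjugation by $\gamma$ swaps the left and right middle blocks, moving the (now positive) $n$-block to the left of the (now negative) $m$-block, which gives picture (c). Reading off the resulting word shows that this is precisely $\beta_{n,m}^+$, so $\beta_{n,m}^+ = \gamma\,(\beta_{m,n}^+)^{-1}\,\gamma^{-1}$.

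The main technical point is verifying that no hidden central element (a power of the full twist $\Delta$, which generates the kernel considered in \fullref{braidcenter-lem}) is absorbed in the diagrammatic identification. This is ensured because all three diagrams (a), (b), (c) share the same strand endpoints at top and bottom, with $p$ on the far left and $q$ on the far right, so the braid each represents is determined on the nose by its picture up to isotopy relative to endpoints. Hence the three diagrams represent conjugate elements in $\sB(D;\sS,\{p\},\{q\})$, as required.
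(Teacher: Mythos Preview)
Your argument is correct and follows essentially the same approach as the paper: form the inverse and then conjugate by a half-twist to swap the two blocks. The only differences are minor. First, the middle set $\sS$ has $m+n+1$ strands, not $m+n$ (recall $\beta_{m,n}\in\sB(D,m+n+1)$), so your $\gamma$ should be the half-twist on those $m+n+1$ strands; with that correction your computation $\sigma_i\mapsto\sigma_{m+n+1-i}$ goes through exactly as you intend. Second, the paper uses instead the half-twist $\eta$ on \emph{all} $m+n+3$ strands (including $p$ and $q$), which swaps $p\leftrightarrow q$ and therefore lies only in $\sB(D;\sS\cup\{p,q\})$ rather than in $\sB(D;\sS,\{p\},\{q\})$. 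Your choice of conjugator actually yields a slightly sharper statement (conjugacy within the smaller subgroup that preserves $p$ and $q$ individually), though this extra strength is not needed downstream. Your closing paragraph about hidden central elements is harmless but unnecessary here: the conjugacy is established at the level of honest braids, not merely mapping classes.
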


\begin{proof} 
The inverse of $\beta_{m,n}^+$ is drawn in \fullref{Bmn-sym-fig}(b). 
Assume without loss of generality that the points of $\sS \cup \{p\}\cup\{q\}$ are 
evenly spaced along a line $\ell$. 
Let $\eta \in \sB(D;\sS\cup\{p,q\})$ be the braid obtained by 
a half-twist of $\ell$ around the barycenter of $\sS \cup \{p\}\cup\{q\}$.  Then 
conjugating the inverse of $\beta_{m,n}^+$ by $\eta$ in $\sB(D;\sS\cup\{p,q\})$ yields 
$\beta_{n,m}^+$ shown in \fullref{Bmn-sym-fig}(c). 
\end{proof}

\begin{lem}
\label{Bmnplus-sym-lem} 
The braid $\beta_{m,n}$ is the image of $\beta_{m,n}^+$ under the forgetful map 
$$\sB(D;\sS,\{p\},\{q\}) \rightarrow \sB(D;\sS),$$
and hence $\beta_{n,m}$ is conjugate to $\beta_{m,n}^{-1}$. 
\end{lem}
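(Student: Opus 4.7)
The plan is to dispatch the two assertions in order; the first is essentially a picture-reading step, and the second is a direct consequence via the functoriality of the forgetful map together with \fullref{Bmnplus-conj-lem}.

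For the first assertion, I would write $\beta_{m,n}^+$ as an explicit word in the standard generators of $\sB(D;\sS\cup\{p,q\})$ read off from \fullref{Bmn-sym-fig}(a). The crossings appearing in this word split into two types: (i) crossings among the strands of $\sS$, which, under the natural identification of the middle $m+n+1$ strands with the strands of $\sB(D;\sS)$, spell out exactly $\sigma_1\cdots\sigma_m\sigma_{m+1}^{-1}\cdots\sigma_{m+n}^{-1} = \beta_{m,n}$; and (ii) auxiliary crossings involving the strands $p$ and $q$. The forgetful map $f\colon\sB(D;\sS,\{p\},\{q\})\to\sB(D;\sS)$ is a group homomorphism that deletes the $p$- and $q$-strands: any generator whose image crossing involves $p$ or $q$ goes to the identity, and the remaining generators are identified with the standard generators of $\sB(D;\sS)$. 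Thus $f(\beta_{m,n}^+)=\beta_{m,n}$.

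For the ``hence'' clause, I would apply the same forgetful-map computation with $m$ and $n$ swapped to conclude $f(\beta_{n,m}^+)=\beta_{n,m}$, and then invoke \fullref{Bmnplus-conj-lem}. That lemma supplies $\eta\in\sB(D;\sS\cup\{p,q\})$ with $\beta_{n,m}^+ = \eta\,(\beta_{m,n}^+)^{-1}\,\eta^{-1}$. Applying the homomorphism $f$ and using both forgetful-map identities gives
\[
\beta_{n,m} \;=\; f(\eta)\,\beta_{m,n}^{-1}\,f(\eta)^{-1}
\]
in $\sB(D;\sS)$, which is exactly the claim that $\beta_{n,m}$ is conjugate to $\beta_{m,n}^{-1}$.

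The only genuinely nontrivial step is the picture-reading in the first assertion: one must verify that the extra structure in $\beta_{m,n}^+$ is precisely the part of the braid that becomes trivial after deleting the $p$- and $q$-strands, and that the surviving word among the strands of $\sS$ reproduces $\beta_{m,n}$ with the correct indices and signs. This is a matter of careful inspection of \fullref{Bmn-sym-fig}(a) rather than a real obstacle; once it is in hand, the second assertion is formal.
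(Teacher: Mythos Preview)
Your proposal is correct and follows essentially the same route as the paper: the first assertion is obtained by inspecting \fullref{Bmn-sym-fig}(a) and observing that deleting the $p$- and $q$-strands yields $\beta_{m,n}$, and the second is deduced from \fullref{Bmnplus-conj-lem} by pushing the conjugacy through the forgetful homomorphism. The only minor wrinkle, present in both your argument and the paper's, is that the conjugating element $\eta$ from \fullref{Bmnplus-conj-lem} swaps $p$ and $q$ and so lives in $\sB(D;\sS,\{p,q\})$ rather than $\sB(D;\sS,\{p\},\{q\})$; the forgetful map extends to this larger group without difficulty, so this does not affect the argument.
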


\begin{proof} 
Compare \fullref{spherical-fig}(a) with \fullref{Bmn-sym-fig}(a)  to get the first 
part of the claim.  Since homomorphisms preserve inverses and conjugates, the 
rest follows from \fullref{Bmnplus-conj-lem}. 
\end{proof}

\fullref{Bmnplus-sym-lem} together with the homomorphism in \eqref{braidmonodromy-eqn} 
shows the following. 

\begin{lem}
\label{Bmnplus-lem}  
The mapping class $\phi_{\beta_{n,m}}$  is conjugate to $\phi_{\beta_{m,n}}^{-1}$. 
\end{lem}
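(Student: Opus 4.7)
The plan is to deduce this directly from \fullref{Bmnplus-sym-lem} by pushing the conjugacy through the homomorphism in \eqref{braidmonodromy-eqn}. By \fullref{Bmnplus-sym-lem}, there exists some $\gamma \in \sB(D;\sS)$ with
\[
\beta_{n,m} = \gamma\, \beta_{m,n}^{-1}\, \gamma^{-1}.
\]
The map $\sB(D;\sS) \to \sM(D;\sS)$, $\beta \mapsto \phi_\beta$, is a group homomorphism, so it preserves both inversion and conjugation. Applying it to the above identity gives
\[
\phi_{\beta_{n,m}} = \phi_\gamma\, \phi_{\beta_{m,n}}^{-1}\, \phi_\gamma^{-1},
\]
which is exactly the claimed conjugacy in $\sM(D;\sS)$.

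There is essentially no obstacle here: the only substantive content was already packaged into \fullref{Bmnplus-sym-lem} (which in turn rests on the geometric half-twist argument of \fullref{Bmnplus-conj-lem} and on the forgetful map $\sB(D;\sS,\{p\},\{q\}) \to \sB(D;\sS)$). The remaining step is purely formal: to note that \eqref{braidmonodromy-eqn} is a homomorphism, hence conjugate braids descend to conjugate mapping classes. I would therefore present the proof in one or two sentences, citing \fullref{Bmnplus-sym-lem} and the homomorphism property of \eqref{braidmonodromy-eqn}, without any further computation.
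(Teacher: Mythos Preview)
Your proof is correct and is essentially the same as the paper's: the paper simply notes that \fullref{Bmnplus-sym-lem} together with the homomorphism in \eqref{braidmonodromy-eqn} yields the statement, which is exactly what you spell out.
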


\begin{prop}
\label{Bmn-conj-prop} 
The Thurston--Nielsen type of $\beta_{n,m}$ is the same as that of $\beta_{m,n}$. 
\end{prop}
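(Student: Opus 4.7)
The plan is to bootstrap directly off of \fullref{Bmnplus-lem}, which already tells us that $\phi_{\beta_{n,m}}$ is conjugate to $\phi_{\beta_{m,n}}^{-1}$ in $\sM(D;\sS)$. So it suffices to verify two formal facts: that the Thurston--Nielsen type is invariant under conjugation in the mapping class group, and that it is also invariant under inversion. Passing through the map $\sB(D;\sS) \to \sM(D;\sS)$ (or, equivalently through the isomorphism $c_*$ of \fullref{boundary-lem} into $\sM(S^2;\sS,\{p_\infty\})$) will then give the claim for the braids themselves.

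First I would record the conjugation invariance. If $\phi = \psi \phi' \psi^{-1}$ in $\sM(F;\sS)$ and $\Phi'$ is a representative of $\phi'$ realizing one of the three types, then for any $\Psi \in \psi$ the homeomorphism $\Phi = \Psi \Phi' \Psi^{-1}$ represents $\phi$ and is manifestly of the same type: a power of $\Phi$ equals the identity iff the same is true for $\Phi'$; a $\Phi'$--invariant essential $1$--submanifold $C$ gives the $\Phi$--invariant $1$--submanifold $\Psi(C)$ with the same complementary Euler characteristics; and if $\Phi'$ is pseudo-Anosov with invariant measured foliations $(\mathcal{F}^\pm, \mu_\pm)$ and dilatation $\lambda$, then $\Phi$ is pseudo-Anosov with invariant measured foliations $\Psi(\mathcal{F}^\pm, \mu_\pm)$ and the same dilatation $\lambda$.

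Next I would verify invariance under inversion. For a homeomorphism $\Phi$ set-wise preserving $\sS$, periodicity of $\Phi$ is equivalent to periodicity of $\Phi^{-1}$ since $\Phi^k=\mathrm{id}$ iff $\Phi^{-k}=\mathrm{id}$. Any $\Phi$--invariant $1$--submanifold is also $\Phi^{-1}$--invariant, so the reducible case is symmetric as well. Finally, if $\Phi$ is pseudo-Anosov with $\Phi(\mathcal{F}^\pm,\mu_\pm)=(\mathcal{F}^\pm,\lambda^{\pm 1}\mu_\pm)$, then applying $\Phi^{-1}$ to both sides gives $\Phi^{-1}(\mathcal{F}^\pm,\mu_\pm) = (\mathcal{F}^\pm,\lambda^{\mp 1}\mu_\pm)$, exhibiting $\Phi^{-1}$ as pseudo-Anosov (with the roles of the stable and unstable foliations interchanged) and with the same dilatation $\lambda$.

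Putting the pieces together, $\phi_{\beta_{m,n}}$ and $\phi_{\beta_{m,n}}^{-1}$ share a common Thurston--Nielsen type, and by \fullref{Bmnplus-lem} the same holds for $\phi_{\beta_{n,m}}$. Transferring this through the definition in \fullref{braidcenter-lem} of what it means for a braid to be periodic, reducible, or pseudo-Anosov, we conclude that $\beta_{n,m}$ has the same Thurston--Nielsen type as $\beta_{m,n}$. There is essentially no obstacle here: the content of the proposition is already packaged into \fullref{Bmnplus-lem}, and the present statement is a bookkeeping corollary about how the three Thurston--Nielsen types behave under the formal operations of conjugation and inversion.
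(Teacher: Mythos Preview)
Your argument is correct and follows exactly the paper's approach: invoke \fullref{Bmnplus-lem} and the fact that the Thurston--Nielsen type is preserved under conjugation and inversion, then pass through \fullref{boundary-lem} to conclude for the braids. The paper's proof is a two-line version that simply asserts the conjugation/inversion invariance without the explicit verification you provide.
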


\begin{proof} 
The Thurston--Nielsen type of a mapping class is preserved under inverses and conjugates.  
Thus, the claim follows from \fullref{boundary-lem} and  \fullref{Bmnplus-lem}. 
\end{proof}

We now turn to $\sigma_{m,n}$. 
Let $\what{\beta}_{m,n}^+$ and $\nu$ be the spherical braids 
drawn in Figures~\ref{SphericalBmnplus-sym-fig}(a) and \ref{ConjBraid-fig} respectively. 

\begin{figure}[htbp]
\begin{center}
\labellist\small
\pinlabel {$p$} [b] at 5 134
\pinlabel {$m$} [b] at 46 137
\pinlabel {$n$} [b] at 98 137
\pinlabel {$q$} [b] at 131 131
\pinlabel {$p_{\infty}$} [b] at 152 133
\pinlabel {$m$} [b] at 300 147
\pinlabel {$n$} [b] at 353 147
\pinlabel {$p_{\infty}$} [b] at 393 139
\pinlabel {(a)} [b] at 67 0
\pinlabel {(b)} [b] at 324 0
\endlabellist
\includegraphics[height=1.25in]{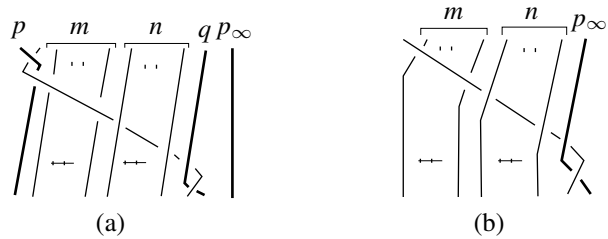}
\caption{Spherical braids (a) $\what\beta_{m,n}^+$ and (b) $\what\sigma_{m,n}$}
\label{SphericalBmnplus-sym-fig}
\end{center}
\end{figure}

\begin{lem}
\label{BmnSmn-lem} 
The spherical braid $\what{\sigma}_{m,n}$ is the image of 
$\nu\what{\beta}_{m,n}^+\nu^{-1}$ under the forgetful map 
$\sB(S^2;\sS,\{p\},\{q\},\{p_\infty\}) \rightarrow \sB(S^2;\sS,\{p_\infty\})$, 
and hence $\what{\sigma}_{n,m}$ is conjugate to $\what{\sigma}_{m,n}^{-1}$. 
\end{lem}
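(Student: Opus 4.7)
The plan is to mirror the proof of \fullref{Bmnplus-sym-lem}, but now in the spherical setting and with the conjugating braid $\nu$ in place to compensate for the twist that arises from collapsing $\partial D$ to $p_{\infty}$.

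First I would establish the identity
$\what{\sigma}_{m,n} = \pi\bigl(\nu\,\what{\beta}_{m,n}^+\,\nu^{-1}\bigr)$,
where $\pi$ denotes the forgetful map
$\sB(S^2;\sS,\{p\},\{q\},\{p_\infty\})\to\sB(S^2;\sS,\{p_\infty\})$.
This is a direct diagrammatic verification, entirely parallel to \fullref{Bmnplus-sym-lem}. Recall $\sigma_{m,n}$ was constructed from $\beta_{m,n}$ by taking the rightmost strand and dragging it counter-clockwise around the remaining strands. Under the passage to the sphere, this loop becomes a braid word that can be realized as the conjugation of $\what{\beta}_{m,n}^+$ by the braid $\nu$ of \fullref{ConjBraid-fig}, whose role is precisely to implement this looping on the $(p,q)$-strands so that, once those two strands are deleted by $\pi$, the resulting spherical braid is exactly $\what{\sigma}_{m,n}$. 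Concretely, I would compare \fullref{SphericalBmnplus-sym-fig}(a) and \fullref{ConjBraid-fig} with \fullref{SphericalBmnplus-sym-fig}(b) and track the strands after isotopy, checking that the auxiliary crossings of $\nu$ and $\nu^{-1}$ cancel outside the $p$- and $q$-strands.

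Having the first identity in hand, the conjugacy assertion follows formally. By \fullref{Bmnplus-conj-lem}, $\beta_{n,m}^+$ is conjugate to $(\beta_{m,n}^+)^{-1}$ in $\sB(D;\sS,\{p\},\{q\})$; since capping off $\partial D$ to $p_\infty$ is a group homomorphism, this gives a conjugacy $\what{\beta}_{n,m}^+ \sim (\what{\beta}_{m,n}^+)^{-1}$ in $\sB(S^2;\sS,\{p\},\{q\},\{p_\infty\})$. Conjugating both sides by $\nu$ preserves the relation, so $\nu\what{\beta}_{n,m}^+\nu^{-1}$ is conjugate to $\bigl(\nu\what{\beta}_{m,n}^+\nu^{-1}\bigr)^{-1}$. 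Applying $\pi$, which is a homomorphism and therefore preserves both inverses and conjugacy classes, and invoking the first identity, yields that $\what{\sigma}_{n,m}$ is conjugate to $\what{\sigma}_{m,n}^{-1}$.

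The main obstacle is the pictorial verification of the first identity: one has to confirm that the specific braid $\nu$ in \fullref{ConjBraid-fig} is the correct conjugator, i.e.\ that it realizes exactly the counter-clockwise loop defining $\sigma_{m,n}$ once the $p$- and $q$-strands are forgotten. Everything after that step is a formal consequence of the existing symmetry \fullref{Bmnplus-conj-lem} and the functoriality of the forgetful and spherical-closure maps.
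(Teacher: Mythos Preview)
Your proposal is correct and follows essentially the same approach as the paper: a diagrammatic verification for the first identity, then a formal transfer of the conjugacy from \fullref{Bmnplus-conj-lem} via the homomorphism properties of the spherical-closure and forgetful maps. One small point worth sharpening: your description of $\nu$ as implementing a ``looping on the $(p,q)$-strands'' is a bit off---as the Remark following the lemma makes explicit, the essential feature of $\nu$ is that it fixes $p$ and $\sS$ while \emph{interchanging $q$ and $p_\infty$}, which is the cleanest way to see why conjugation by $\nu$ converts $\what\beta_{m,n}^+$ into a braid whose image under $\pi$ is $\what\sigma_{m,n}$.
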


\begin{proof} 
Compare \fullref{SphericalBmnplus-sym-fig}(a) and \fullref{SphericalBmnplus-sym-fig}(b) 
to get the first part of the claim. 
The rest follows by using the same argument in the proof of \fullref{Bmnplus-sym-lem}. 
\end{proof}

\begin{rem} 
In the statement of \fullref{BmnSmn-lem}, $\nu$ could be replaced 
by any braid which is the identity on $p$ and $\sS$, 
and interchanges $q$ and $p_\infty$. 
\end{rem}

\begin{figure}[htbp]
\labellist\small
\pinlabel {$p$} [b] at 2 50
\pinlabel {$1$} [b] at 22 50
\pinlabel {$s$} [b] at 107 50
\pinlabel {$q$} [b] at 123 50
\pinlabel {$p_{\infty}$} [b] at 150 50
\endlabellist
\begin{center}
\includegraphics[height=0.75in]{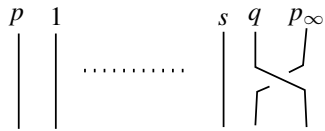}
\caption{Spherical braid $\nu$: Switching the roles of $p_\infty$ and $q$}
\label{ConjBraid-fig}
\end{center}
\end{figure}

\begin{lem}
\label{Smn=Snm-lem} 
The mapping class $\phi_{\sigma_{n,m}}$ is conjugate to $\phi_{\sigma_{m,n}}^{-1}$. 
\end{lem}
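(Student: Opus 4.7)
The plan is to mimic the proof of Lemma \ref{Bmnplus-lem}, leveraging Lemma \ref{BmnSmn-lem} as the braid-level input. Lemma \ref{BmnSmn-lem} already establishes that $\what{\sigma}_{n,m}$ is conjugate to $\what{\sigma}_{m,n}^{-1}$ in $\sB(S^2;\sS,\{p_\infty\})$; what remains is simply to transport this conjugacy first to the spherical mapping class group, and then back to $\sM(D;\sS)$ via the contraction isomorphism.

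First I would apply the natural homomorphism $\sB(S^2;\sS,\{p_\infty\}) \rightarrow \sM(S^2;\sS,\{p_\infty\})$ from \eqref{braidmonodromy-eqn} (in its spherical form). Any group homomorphism preserves inverses and conjugates, so the braid-level conjugacy $\what{\sigma}_{n,m} = \gamma \what{\sigma}_{m,n}^{-1} \gamma^{-1}$ in $\sB(S^2;\sS,\{p_\infty\})$ pushes forward to a mapping class conjugacy
\[
\phi_{\what{\sigma}_{n,m}} = \phi_\gamma \, \phi_{\what{\sigma}_{m,n}}^{-1} \, \phi_\gamma^{-1}
\]
in $\sM(S^2;\sS,\{p_\infty\})$.

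Next I would invoke \fullref{boundary-lem} applied to $F^1 = D$, whose boundary contracts to the point $p_\infty$. The resulting isomorphism
\[
c_* \colon\thinspace \sM(D;\sS) \rightarrow \sM(S^2;\sS,\{p_\infty\})
\]
satisfies $c_*(\phi_{\sigma_{m,n}}) = \phi_{\what{\sigma}_{m,n}}$ and $c_*(\phi_{\sigma_{n,m}}) = \phi_{\what{\sigma}_{n,m}}$, exactly as in the discussion following \eqref{braidmonodromy-eqn}. Since $c_*$ is a group isomorphism, conjugacy and inversion are reflected between the two mapping class groups, so the previous conjugacy in the spherical mapping class group lifts back to
\[
\phi_{\sigma_{n,m}} = c_*^{-1}(\phi_\gamma) \, \phi_{\sigma_{m,n}}^{-1} \, c_*^{-1}(\phi_\gamma)^{-1}
\]
in $\sM(D;\sS)$, which is the desired statement.

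There is no real obstacle here: the lemma is a formal consequence of \fullref{BmnSmn-lem} combined with the functoriality of the braid-to-mapping-class homomorphism and the isomorphism $c_*$. The only minor subtlety to check is that one is entitled to use $c_*$ in the presence of the distinguished puncture $p_\infty$, but this is precisely the content of \fullref{boundary-lem} with $b=1$.
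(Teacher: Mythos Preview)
Your proof is correct and follows exactly the same approach as the paper: use \fullref{BmnSmn-lem} to obtain the conjugacy at the level of spherical braids, push it to the spherical mapping class group via the homomorphism in \eqref{braidmonodromy-eqn}, and then pull it back to $\sM(D;\sS)$ using the isomorphism $c_*$ of \fullref{boundary-lem}. The paper's version is simply a two-sentence compression of what you have written out in full.
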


\begin{proof} 
By \fullref{BmnSmn-lem}, 
$\phi_{\what{\sigma}_{n,m}}$ is conjugate to $\phi_{\what{\sigma}_{m,n}}^{-1}$. 
Since the contraction map $c$ in \eqref{contraction-map} induces the isomorphism $c_*$ 
on mapping class groups, the claim follows. 
\end{proof}

\fullref{Smn=Snm-lem} immediately shows the following. 

\begin{prop}
\label{Smn-sym-prop}  
The Thurston--Nielsen type of $\sigma_{m,n}$ is the same as that for $\sigma_{n,m}$. 
\end{prop}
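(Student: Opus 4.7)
The plan is to mimic the proof of \fullref{Bmn-conj-prop} almost verbatim, since all the hard work has already been done in \fullref{Smn=Snm-lem}. The Thurston--Nielsen classification partitions mapping classes into three types (periodic, reducible, pseudo-Anosov), and each of these types is a conjugation-invariant subset of the mapping class group that is also closed under taking inverses (for periodicity this is immediate, for reducibility one notes that a $\Phi$--invariant reducing multicurve is also $\Phi^{-1}$--invariant, and for the pseudo-Anosov case the pair $(\sF^{\pm},\mu_\pm)$ associated to $\Phi$ serves as the transverse invariant measured foliations for $\Phi^{-1}$ with dilatation $\lambda(\Phi)$ after swapping stable and unstable). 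Thus, from \fullref{Smn=Snm-lem}, which asserts that $\phi_{\sigma_{n,m}}$ is conjugate to $\phi_{\sigma_{m,n}}^{-1}$, we conclude that $\phi_{\sigma_{n,m}}$ and $\phi_{\sigma_{m,n}}$ share the same Thurston--Nielsen type.

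To translate this mapping class statement into one about the braids themselves, I would invoke \fullref{boundary-lem}: the contraction map $c_*$ yields an isomorphism of mapping class groups that preserves Thurston--Nielsen type, so the Thurston--Nielsen type of $\phi_{\sigma_{m,n}} \in \sM(D;\sS)$ agrees with that of $\phi_{\what{\sigma}_{m,n}} \in \sM(S^2;\sS,\{p_\infty\})$, which is by definition the Thurston--Nielsen type of the braid $\sigma_{m,n}$. Combining these observations gives the desired equality of types.

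I do not anticipate any serious obstacle: the only point that needs a line of care is the observation that inversion preserves Thurston--Nielsen type, and this is essentially automatic from the definitions given in \fullref{mappings-section}. The proof is therefore a one-liner appealing to \fullref{boundary-lem} and \fullref{Smn=Snm-lem}, exactly parallel to the proof of \fullref{Bmn-conj-prop}.
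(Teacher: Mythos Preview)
Your proposal is correct and matches the paper's approach exactly: the paper states that \fullref{Smn=Snm-lem} ``immediately shows'' the proposition, and your argument---that Thurston--Nielsen type is preserved under conjugation and inversion, combined with \fullref{boundary-lem} to pass between $\sM(D;\sS)$ and $\sM(S^2;\sS,\{p_\infty\})$---is precisely the parallel of the paper's proof of \fullref{Bmn-conj-prop}.
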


\subsection{Graph maps}
\label{Graphmap-section}

\begin{thm} 
\label{Bmn-classification-thm}  
The braid $\beta_{m,n}$ is pseudo-Anosov for all $m,n \ge 1$, and 
$\lambda(\beta_{m,n}) = \lambda(\beta_{n,m})$. 
\end{thm}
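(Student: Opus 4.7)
The proof naturally splits into two independent pieces: the dilatation symmetry and the pseudo-Anosov conclusion.

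For the symmetry $\lambda(\beta_{m,n}) = \lambda(\beta_{n,m})$, I would simply invoke the chain of results already set up in Section~\ref{symmetry-section}. By \fullref{Bmnplus-lem}, the mapping class $\phi_{\beta_{n,m}}$ is conjugate to $\phi_{\beta_{m,n}}^{-1}$. Dilatation is a conjugacy invariant, and for any pseudo-Anosov map $\Phi$ one has $\lambda(\Phi) = \lambda(\Phi^{-1})$ because $\Phi^{-1}$ is pseudo-Anosov with stable/unstable foliations swapped. Once we know $\beta_{m,n}$ is pseudo-Anosov, the same conjugacy guarantees that $\beta_{n,m}$ is too, and that the two dilatations agree. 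So the symmetry statement is free once we prove the pseudo-Anosov claim for all $m,n\ge 1$.

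For the pseudo-Anosov claim, my plan is to apply the Bestvina--Handel criterion \fullref{BH-thm}. Concretely, I would fix a star-like finite graph $G$ embedded in $D$ that is homotopy equivalent to $D\setminus\sS$: a central vertex $v$ with $s=m+n+1$ edges going out to the small loops $P_i$ around each puncture. Then I would choose a specific homeomorphism $f$ in the isotopy class $\phi_{\beta_{m,n}}$, built by composing the standard half-twist homeomorphisms for each $\sigma_i^{\pm1}$ in the braid word $\sigma_1\cdots\sigma_m\sigma_{m+1}^{-1}\cdots\sigma_{m+n}^{-1}$, arranged so that $\mathcal{F}(G)$ carries $f$. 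This produces an explicit induced graph map $\g\co G\to G$ which I would write down edge by edge. The form of the map should be essentially independent of $m$ and $n$ (just lengthened in a predictable way), matching the paper's remark that the resulting train tracks have a fixed essential form (\fullref{Bmn-TT}).

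With $\g$ in hand, verifying the two hypotheses of \fullref{BH-thm} becomes a concrete check. For (BH1), I would enumerate the gates at the central vertex $v$ under the derivative $D_\g$ and verify that in the word for each $\g(e)$, no two consecutive oriented edges $\bar e_i e_{i+1}$ satisfy $D_\g(\bar e_i)=D_\g(e_{i+1})$; then iterating $\g$ cannot create back tracks. For (BH2), I would write the transition matrix $\sT$ on the real edges and check irreducibility by exhibiting, for each pair of real edges, some power under which one appears in the image of the other (a connectivity argument on $G$), then conclude $\lambda(\sT)>1$ via Perron--Frobenius once I observe that $\sT$ has a row sum at least $2$ (the edges are genuinely stretched).

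The main obstacle I expect is setting up the homeomorphism $f$ and the graph $G$ carefully enough that the induced graph map $\g$ is actually efficient on the nose, rather than merely tight in one step. Naive choices often need a Bestvina--Handel fold or valence-two absorption before (BH1) holds. I would allocate most of the effort to a clean definition of $\g$, then treat the gate analysis and irreducibility of $\sT$ as a finite, parametrized-in-$(m,n)$ verification, which is made uniform by the fact that the combinatorial pattern of $\g$ repeats as $m$ and $n$ grow.
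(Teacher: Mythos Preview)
Your approach matches the paper's: use \fullref{Bmnplus-lem} for the symmetry, and verify the Bestvina--Handel criterion via an explicit graph map for the pseudo-Anosov conclusion. The only substantive difference is the choice of graph. You propose a single-star graph (one central vertex $v$ with $m+n+1$ spokes to the peripheral loops), whereas the paper uses the graph $G_{m,n}$ with \emph{two} hub vertices $p$ and $q$: spokes from $p$ to the punctures $1,\dots,m$, spokes from $q$ to $m+2,\dots,m+n+1$, and both $p$ and $q$ joined to $m+1$. This two-hub shape mirrors the block structure of $\beta_{m,n}$ (positive twists on the left, negative on the right) and is precisely what makes (BH1) hold on the nose: the paper localizes any potential back track to the single junction edge between the two stars and rules it out by a short case check. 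Your own caveat that ``naive choices often need a fold'' applies to the single-star ansatz here; the two-hub graph is the clean choice you are anticipating. For (BH2) the paper argues exactly as you suggest, tracking a single edge (namely $e(q,m+1)$) through enough iterates to hit every real edge.
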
 

Let $G_{m,n}$ be the graph with vertices $1, \ldots, m+n+1,p$ and $q$ in 
\fullref{BmnGraph-fig}(left). 
Consider the graph map $\g= \g_{m,n}  \colon\thinspace G_{m,n} \rightarrow G_{m,n}$  given in 
\fullref{BmnGraph-fig}, where the ordering of the loop edges of 
$G_{m,n}$ corresponds to the left-to-right ordering of $\beta_{m,n}$. 
We denote the oriented edge with the initial vertex $a$ and the terminal vertex $b$ by $e(a,b)$. 

\begin{figure}[htbp]
\labellist\small
\pinlabel {$1$} [r] at 17 72
\pinlabel {$2$} [r] at 23 97
\pinlabel {$m{-}1$} [br] at 59 129
\pinlabel {$m$} [b] at 81 136
\pinlabel {$p$} [t] at 81 72
\pinlabel {$m{+}1$} [b] at 121 79
\pinlabel {$q$} [b] at 161 72
\pinlabel {$m{+}2$} [t] at 160 15
\pinlabel {$m{+}3$} [tl] at 182 22
\pinlabel {$m{+}n$} [l] at 210 52
\pinlabel {$m{+}n{-}1$} [b] at 215 76
\pinlabel {$2$} [r] at 288 82
\pinlabel {$3$} [r] at 291 106
\pinlabel {$m$} [br] at 324 136
\pinlabel {$m{+}1$} [b] at 350 141
\pinlabel {$m{+}2$} [t] at 389 73
\pinlabel {$m{+}3$} [t] at 432 19
\pinlabel {$m{+}4$} [tl] at 455 25
\pinlabel {$m{+}n{+}1$} [l] at 486 55
\pinlabel {$1$} [b] at 488 88
\endlabellist
\begin{center}
\includegraphics[width=5in]{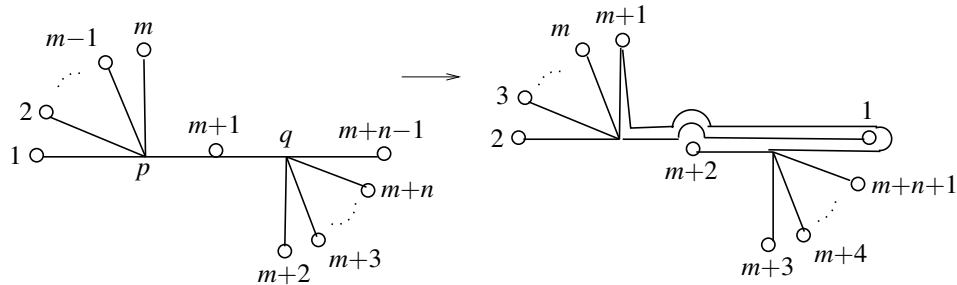}
\caption{Graph map $\g_{m,n}$ for $\phi_{\beta_{m,n}}$}
\label{BmnGraph-fig}
\end{center}
\end{figure}

\begin{prop}
\label{BmnGraph-prop}
The graph map $\g_{m,n} \colon\thinspace  G_{m,n} \rightarrow G_{m,n}$ is the induced graph map 
for $\phi_{\beta_{m,n}}$ satisfying {\rm(BH1)} and {\rm(BH2)}. 
\end{prop}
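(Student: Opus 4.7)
The plan is to verify three things separately: (a) that the graph map $\g_{m,n}$ displayed in Figure~\ref{BmnGraph-fig} is actually induced by a homeomorphism representing $\phi_{\beta_{m,n}}$ and carried on some fibered surface $\mathcal{F}(G_{m,n})$, (b) that $\g_{m,n}$ is efficient in the sense of (BH1), and (c) that the transition matrix $\sT$ of $\g_{m,n}$ restricted to the real edges is irreducible with spectral radius exceeding~$1$.

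For (a), I would embed $G_{m,n}$ in $D$ so that the vertices $1,\dots,m+n+1$ lie at the marked points of $\sS$, with the auxiliary vertices $p$ and $q$ placed so that the linear order along a horizontal segment reads $1,\dots,m,p,m+1,q,m+2,\dots,m+n+1$. Choose $\mathcal{F}(G_{m,n})$ to be the regular neighborhood built from a polygonal $k$-junction at each vertex of valence $k$ and a strip along each edge. A concrete homeomorphism representing $\phi_{\beta_{m,n}}$ is then obtained by composing the standard half-twists implementing $\sigma_1,\dots,\sigma_m,\sigma_{m+1}^{-1},\dots,\sigma_{m+n}^{-1}$, each supported in a tiny disk enclosing the relevant pair of adjacent marked points. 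Tracking the image of every edge of $G_{m,n}$ under this composition and comparing with the edge-paths prescribed in Figure~\ref{BmnGraph-fig} identifies $\g_{m,n}$ as the induced graph map. The bookkeeping decomposes into two symmetric halves, since the first $m$ letters act nontrivially only on the left half of the graph $\{1,\dots,m+1,p\}$ and the last $n$ letters only on the right half $\{m+1,\dots,m+n+1,q\}$; Lemma~\ref{conj-lem} and the $m\leftrightarrow n$ symmetry encoded in Proposition~\ref{Bmn-conj-prop} let me halve the work further.

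For (b), I would compute the derivative $D_{\g_{m,n}}$ on every oriented edge by reading off the initial edge of each image path, determine the partition of oriented edges at each vertex into gates, and then check directly that for every listed image path $\g_{m,n}(e)=e_1e_2\cdots e_\ell$ and every consecutive pair $e_ie_{i+1}$, the edges $\wbar{e}_i$ and $e_{i+1}$ lie in different gates. Since the star-like shape of $G_{m,n}$ forces a highly repetitive pattern, this is a finite, uniform calculation in $m$ and $n$. Absence of back-tracking for every iterate $\g_{m,n}^k$ then follows by induction: if one concatenates two image paths neither of which back-tracks and whose junction passes the gate test, the composition also has no back-track.

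For (c), I would write $\sT$ out directly from Figure~\ref{BmnGraph-fig}, its entries recording how many times each real edge appears in the image of another. The digraph associated to $\sT$ is strongly connected because any real edge's iterated image eventually sweeps across both hubs $p$ and $q$, hitting every other real edge along the way; this establishes irreducibility. The spectral radius strictly exceeds $1$ because $\sT$ is a nonnegative integer matrix that is not a permutation — some image path has length $\ge 2$, so some entry of a power of $\sT$ is strictly greater than $1$ — so Perron--Frobenius forces $\lambda(\sT)>1$. The main obstacle is step~(a): the careful tracking of how each half-twist at $\sigma_i^{\pm 1}$ rearranges the edges incident to $p$ and $q$, which is where errors are most likely. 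Once the graph map is pinned down, steps~(b) and~(c) reduce to systematic combinatorial checks that proceed uniformly in the parameters.
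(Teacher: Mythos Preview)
Your proposal is correct and follows the same direct-verification strategy as the paper. The paper dismisses (a) as ``easy to see'' and handles (BH1) more economically than your full gate computation: it observes from Figure~\ref{BmnGraph-fig} that the only pair of distinct oriented edges $e_1,e_2$ from a common vertex with $D_{\g_{m,n}}(e_1)=D_{\g_{m,n}}(e_2)$ is $e_1=e(m+1,p)$, $e_2=e(m+1,q)$, so it suffices to check that no image path contains the subword $e(p,m+1)\,e(m+1,q)$ or its reverse; for (BH2) it names the specific edge $e(q,m+1)$ whose iterates realize your strong-connectedness claim.
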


\begin{proof} 
It is easy to see that the fibered surface $\mathcal{F}(G_{m,n})$ carries a homeomorphism of 
$\phi_{\beta_{m,n}}$, and hence $\g_{m,n} \colon\thinspace G_{m,n} \rightarrow G_{m,n}$ 
is the induced graph map for $\phi_{\beta_{m,n}}$. 

As shown in \fullref{BmnGraph-fig}, any back track must occur at 
$e(p,m)$, that is, if $\g^k$ has back tracks, and $k$ is chosen minimally, 
then there is an edge $e \in \DE(G_{m,n})$ such that 
\begin{eqnarray}
\label{badedge-eqn}
\g^{k-1}(e) = \ldots \wbar{e}_1\cdot e_2 \ldots\ \mbox{with}\ 
D_\g(e_1) = D_\g(e_2) = e(m,p). 
\end{eqnarray}
This implies that $\wbar{e}_1 = e(p,m+1)$ and $e_2 = e(m+1,q)$ 
(or $\wbar{e}_1=e(q,m+1)$ and $e_2=e(m+1,p)$).    As can be seen by 
\fullref{BmnGraph-fig}, one can verify that 
there can be no edge of the form given in \eqref{badedge-eqn}. 
This proves {\rm(BH1)}.  
 
To prove {\rm(BH2)}, it suffices to note that $\g^{m+n}(e(q,m+1))$ crosses all non-loop edges 
 of $G_{m,n}$ in either direction, and for any non-loop edge $e$ of $G_{m,n}$, $\g^k(e)$ crosses $e(q,m+1)$ in either direction for some $k \ge 1$. 
 \end{proof}

\begin{proof}[Proof of \fullref{Bmn-classification-thm}]
By \fullref{BmnGraph-prop}, $\beta_{m,n}$ is pseudo-Anosov
for all $m,n \ge 1$.  By \fullref{Bmnplus-lem}, we have
$\lambda(\beta_{m,n})=\lambda(\beta_{n,m})$.
\end{proof}

We now turn to $\sigma_{m,n}$. 

\begin{figure}[htbp]
\labellist\small
\pinlabel {$a$} [b] at 32 109
\pinlabel {$b$} [t] at 62 18
\pinlabel {$s{-}b{-}1$} [b] at 112 113
\pinlabel {$p_{\infty}$} [b] at 143 104
\endlabellist
\begin{center}
\includegraphics[height=1.25in]{\figdir/wrapbraid}
\caption{Spherical braid $\tau_{a,b} \in \sB(S^2;\sS,\{p_\infty\})$}
\label{wrapbraid-fig}
\end{center}
\end{figure}

\begin{thm} 
\label{Smn-classification-thm}  
The braid $\sigma_{m,n}$ is 
pseudo-Anosov for all $m,n \ge 1$ satisfying $|m-n| \ge 2$.  
In these cases $\lambda(\sigma_{m,n}) = \lambda(\sigma_{n,m})$. 
For any $m \ge 1$,  $\sigma_{m,m}$ is periodic, 
and $\sigma_{m,m+1}$ and $\sigma_{m+1,m}$ are reducible.
\end{thm}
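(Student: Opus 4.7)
The plan is to treat the three assertions separately, following the template of \fullref{BmnGraph-prop} for the pseudo-Anosov case and using direct geometric arguments for the periodic and reducible cases.

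For $|m-n|\geq 2$, I would construct an analogue $G'_{m,n}$ of the graph $G_{m,n}$ from \fullref{BmnGraph-fig}, embedded in $D$ and homotopy equivalent to $D\setminus \sS$, together with an induced graph map $\g'_{m,n}\colon G'_{m,n}\to G'_{m,n}$ for $\phi_{\sigma_{m,n}}$. The graph $G'_{m,n}$ must incorporate the counter-clockwise wrap of the rightmost strand of $\beta_{m,n}$ around the rest; this adds extra edges near the outermost vertex, and the images of certain loop edges under $\g'_{m,n}$ pick up a corresponding arc. I would then verify (BH1) of \fullref{BH-thm} by locating the vertices at which back-tracks could appear and ruling them out one by one (as in the proof of \fullref{BmnGraph-prop}), and (BH2) by exhibiting a power of $\g'_{m,n}$ under which a chosen real edge crosses every other real edge in either direction. \fullref{BH-thm} then gives that $\sigma_{m,n}$ is pseudo-Anosov. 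The equality $\lambda(\sigma_{m,n})=\lambda(\sigma_{n,m})$ follows immediately from \fullref{Smn=Snm-lem}, since dilatation is invariant under conjugation and inversion.

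For $m=n$, I would pass to the spherical braid $\what\sigma_{m,m}$ on $2m+2$ strands. The balanced pattern of $m$ positive and $m$ negative crossings together with the wrap-around (\fullref{SphericalBmnplus-sym-fig}(b) with $m=n$) makes the picture rotationally symmetric on $S^2$, and I would exhibit an explicit rigid finite-order rotation of $S^2$ preserving $\sS\cup\{p_\infty\}$ that represents $\phi_{\what\sigma_{m,m}}$, whence $\sigma_{m,m}$ is periodic. For $|m-n|=1$, I would exhibit an essential simple closed curve in $D\setminus \sS$ preserved up to isotopy by $\phi_{\sigma_{m,n}}$ --- for instance, a curve enclosing the block of $\min(m,n)$ strands on one side separated from the remaining strands --- and check that its complementary components have negative Euler characteristic. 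By \fullref{Smn-sym-prop} it suffices to handle one of $\sigma_{m,m+1}$ and $\sigma_{m+1,m}$.

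The main obstacle will be the verification of (BH1) and of irreducibility in (BH2) for $\g'_{m,n}$: the wrap-around introduces several new edges clustered at a common vertex, and one must systematically rule out every concatenation $\wbar{e}_1e_2$ that could occur in $(\g'_{m,n})^k(e)$ with $D_{\g'_{m,n}}(e_1)=D_{\g'_{m,n}}(e_2)$. Once the correct graph map is in place, the argument is combinatorial bookkeeping parallel to that for $\g_{m,n}$. The periodic and reducible cases should be comparatively short once the finite-order rotation and the invariant curve are correctly identified, with the only subtlety being to ensure that the invariant curve in the reducible case is essential and separates points so that the complementary surface has negative Euler characteristic.
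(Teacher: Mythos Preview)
Your overall template is right, but the proposal has a genuine gap: you are trying to work directly with $\sigma_{m,n}$ in its original form, whereas the paper's key move is to first conjugate $\what\sigma_{m,n}$ through an explicit sequence of spherical braids $\tau_{a,b}$ (\fullref{wrapbraid-fig}) to reach a much simpler normal form $\what\sigma_{m,n}^{(m-1)}$ (\fullref{interbraid-fig}). Only after this inductive conjugation do the three cases become tractable, and the argument in each case depends on the shape of that normal form.

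Concretely, your reducible argument is not correct as stated. A curve enclosing the block of $\min(m,n)$ strands is not invariant under $\phi_{\sigma_{m,m+1}}$: the $\sigma_m\sigma_{m+1}^{-1}$ crossings mix the two blocks, and the wrap-around of the rightmost strand passes through any such separating curve. The paper instead conjugates until the braid visibly pairs up marked points (\fullref{perred-fig}(a)), and the invariant multicurve consists of boundaries of disks each enclosing a \emph{pair} of points --- a structure you cannot see before conjugating. Likewise, for $\sigma_{m,m}$ the picture in \fullref{SphericalBmnplus-sym-fig}(b) is not rotationally symmetric as drawn; the periodicity only becomes apparent after the same sequence of conjugations reduces the braid to the form in \fullref{perred-fig}(b). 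For the pseudo-Anosov case, the paper again uses the conjugate $\sigma'_{m,n}$, for which the efficient graph map $\h_{m,n}\colon H_{m,n}\to H_{m,n}$ (\fullref{SmnGraph-fig}) has a single star center $p$ and is directly parallel to $\g_{m,n}$; attempting to build an efficient graph map for the unconjugated $\sigma_{m,n}$ would force you to reinvent this simplification inside the Bestvina--Handel algorithm.

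Your use of \fullref{Smn=Snm-lem} for $\lambda(\sigma_{m,n})=\lambda(\sigma_{n,m})$ and of \fullref{Smn-sym-prop} to reduce to $n\ge m$ is exactly what the paper does.
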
 
 In light of \fullref{Smn-sym-prop}, 
 we will consider only $\sigma_{m,n}$ when  $n \ge m \ge 1$.   
 To prove \fullref{Smn-classification-thm}, we first redraw $\sigma_{m,n}$ 
 in a conjugate form  using induction. 
 Let $\tau_{a,b}$ be the spherical braid drawn in \fullref{wrapbraid-fig}.  
  Roughly speaking, conjugation by $\tau_{a,b}$ on  $\sigma_{m,n}$ 
  is the same as passing a strand counterclockwise around the other 
 strands, and then compensating below after a shift of indices. 

\begin{figure}[htbp]
\labellist\small
\pinlabel {$m{-}1$} [b] at 77 151
\pinlabel {$n{-}1$} [b] at 128 153
\pinlabel {$p_{\infty}$} [b] at 162 150
\pinlabel {$m{-}1$} [b] at 365 154
\pinlabel {$n{-}1$} [b] at 425 154
\pinlabel {$p_{\infty}$} [b] at 467 147
\pinlabel {$=$} at 260 81
\endlabellist
\begin{center}
\includegraphics[width=4in]{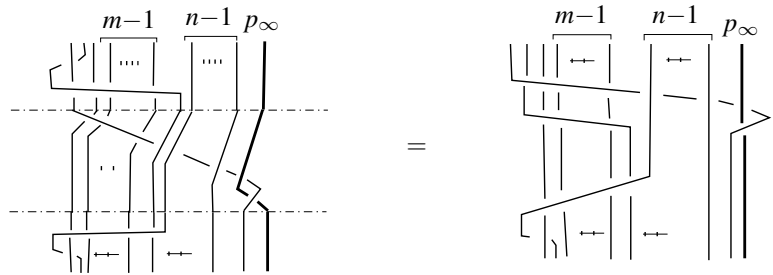}
\caption{Conjugating $\what\sigma_{m,n}$: Initial step}
\label{firstwrap-fig}
\end{center}
\end{figure} 
 Let ${\what{\sigma}_{m,n}}^{(0)} = \what{\sigma}_{m,n}$ be the image of $\sigma_{m,n}$ in $\sB(S^2;\sS,\{p_\infty\})$ as drawn in \fullref{SphericalBmnplus-sym-fig}(b). 
Let 
$$
{\what{\sigma}_{m,n}}^{(1)} = \tau_{1,m+1} \what{\sigma}_{m,n} \tau_{1,m+1}^{-1} 
$$
shown in \fullref{firstwrap-fig}.  The inductive step is illustrated in \fullref{inductive-fig}.  
The $k$th braid $\what{\sigma}_{m,n}^{(k)}$ 
is constructed from the $(k{-}1)$st braid by conjugating by $\tau_{2k+1,m+k+1}$ for $k=1,\dots,m-1$. 
The resulting braid $\what{\sigma}_{m,n}^{(m-1)}$ takes one of three forms: 
\fullref{interbraid-fig}(a) shows the general case when $n \ge m+2$, 
\fullref{interbraid-fig}(b) shows the case when $n=m+1$, and 
\fullref{interbraid-fig}(c) shows the case when $n=m$. 

\begin{figure}[htbp]
\labellist\tiny
\pinlabel {$2k{-}1$} [b] at 52 187
\pinlabel {$m{-}k$} [b] at 90 187
\pinlabel {$n{-}k{-}1$} [b] at 139 187
\pinlabel {$p_{\infty}$} [b] at 172 182
\pinlabel {$2k{-}1$} [b] at 328 195
\pinlabel {$m{-}k{-}1$} [b] at 382 195
\pinlabel {$n{-}k{-}1$} [b] at 432 195
\pinlabel {$p_{\infty}$} [b] at 474 185
\small
\pinlabel {(a)} [b] at 100 0
\pinlabel {(b)} [b] at 400 0
\endlabellist
\begin{center}
\includegraphics[width=4.75in]{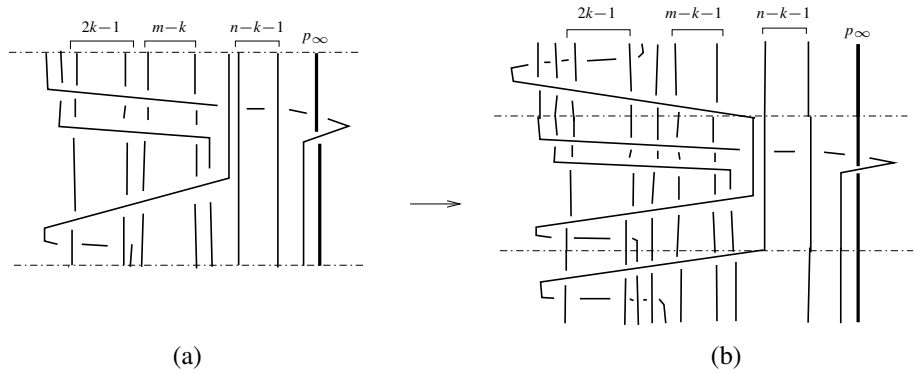}
\caption{Induction step}
\label{inductive-fig}
\end{center}
\end{figure}

\begin{figure}[htbp]
\labellist\tiny
\pinlabel {$2m{-}1$} [b] at 32 160
\pinlabel {$n{-}m{-}1$} [b] at 97 160
\pinlabel {$p_{\infty}$} [b] at 137 152
\pinlabel {$2m{-}1$} [b] at 252 160
\pinlabel {$p_{\infty}$} [b] at 314 155
\pinlabel {$2m{-}1$} [b] at 428 160
\pinlabel {$p_{\infty}$} [b] at 471 155
\small
\pinlabel {(a)} [b] at 75 0
\pinlabel {(b)} [b] at 275 0
\pinlabel {(c)} [b] at 450 0
\endlabellist
\begin{center}
\includegraphics[height=1.4in]{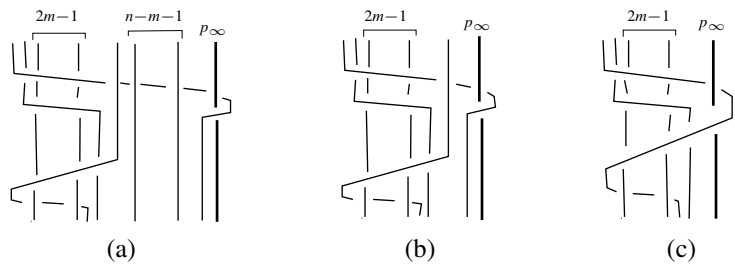}
\caption{After $(m-1)$ inductive steps: (a) $n \ge m+2$ (b) $n = m+1$ (c) $n=m$}
\label{interbraid-fig}
\end{center}
\end{figure}

\begin{figure}[htbp]
\labellist\tiny
\pinlabel {$2m$} [b] at 36 159
\pinlabel {$p_{\infty}$} [b] at 87 155
\pinlabel {$p_{\infty}$} [b] at 277 155
\pinlabel {$2m{-}1$} [b] at 390 162
\pinlabel {$p_{\infty}$} [b] at 443 153
\pinlabel {$2m{-}1$} [b] at 554 158
\pinlabel {$p_{\infty}$} [b] at 608 149
\small
\pinlabel {$=$} at 130 100
\pinlabel {$=$} at 490 100
\pinlabel {(a)} [b] at 130 0
\pinlabel {(b)} [b] at 490 0
\endlabellist
\begin{center}
\includegraphics[height=1.4in]{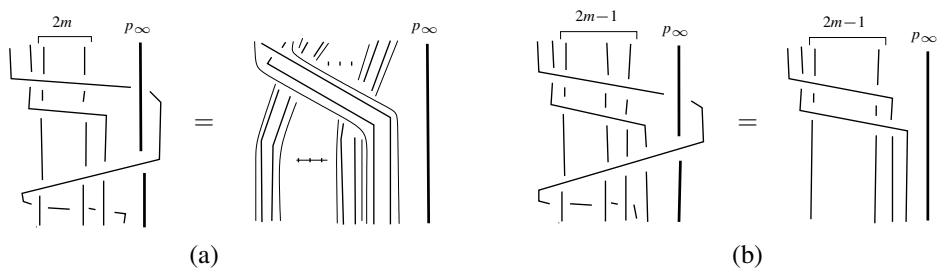}
\caption{(a) reducible and (b) periodic cases}
\label{perred-fig}
\end{center}
\end{figure}

\begin{prop}
\label{reducible-prop}
When $n=m+1$, $\sigma_{m,n}$ is a reducible braid. 
\end{prop}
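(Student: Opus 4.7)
The strategy is to continue the sequence of conjugations initiated above until $\what{\sigma}_{m,m+1}$ is brought to a form that visibly preserves an essential simple closed curve, and then to check that this curve is a genuine reducing curve for $\phi_{\what{\sigma}_{m,m+1}}$.

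Starting from the representative $\what{\sigma}_{m,m+1}^{(m-1)}$ of \fullref{interbraid-fig}(b), I would perform the additional conjugation indicated in \fullref{perred-fig}(a). From that picture the resulting spherical braid factors, up to isotopy rel $\sS \cup \{p_\infty\}$, as a commuting product $\alpha \cdot \alpha'$ of two spherical braids supported on disjoint sub-disks $D'$ and $D''$ of $S^2$ whose common boundary is a single simple closed curve $C$: one non-trivial block of marked points lies in $D'$ and the complementary block (including $p_\infty$) lies in $D''$. Because $\alpha$ and $\alpha'$ commute and are supported on disjoint sub-disks, any homeomorphism realizing $\phi_{\what{\sigma}_{m,m+1}}$ preserves $C$.

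To conclude that $\sigma_{m,m+1}$ is reducible, I would verify from the picture that $C$ separates $\sS \cup \{p_\infty\}$ into two subsets each of size at least two. Each complementary component of $C$ in $S^2 \setminus (\sS \cup \{p_\infty\})$ is then an open disk carrying $k \ge 2$ further punctures, so it has Euler characteristic $1 - k \le -1 < 0$. By the definition of reducibility for braids following \fullref{braidcenter-lem}, this shows that $\sigma_{m,m+1}$ is reducible.

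The main obstacle is arguing carefully from \fullref{perred-fig}(a) that the claimed commuting decomposition really holds --- that is, that after the final conjugation no crossings of one block interfere with the other, and that each block separately carries a non-trivial braid so that $C$ is essential rather than isotopic into a neighborhood of a single puncture. Once this is established, the count of marked points on each side of $C$ and the ensuing Euler characteristic check are immediate.
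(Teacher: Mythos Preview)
Your overall strategy---perform the additional conjugation indicated in \fullref{perred-fig}(a) and then read off an invariant essential $1$--submanifold---is exactly the paper's approach. But the specific reducing system you guess is not what the figure shows, and the ``commuting product supported on two disjoint sub-disks'' decomposition you describe does not hold.

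After conjugating once more by $\tau_{2m+1,2m+1}$, the resulting spherical braid (the right-hand side of \fullref{perred-fig}(a)) does \emph{not} factor into two independent blocks separated by a single curve $C$. Instead, the $2m+2$ strands of $\sS$ travel in adjacent \emph{pairs}: the braid carries each pair to another pair as a unit. The invariant $1$--submanifold the paper exhibits is therefore a collection of disjoint circles, each bounding a disk that encloses exactly two marked points of $\sS$. Every complementary component of this curve system in $S^2 \setminus (\sS \cup \{p_\infty\})$ has negative Euler characteristic, so $\phi_{\what{\sigma}_{m,m+1}}$ is reducible, and \fullref{boundary-lem} transfers this conclusion to $\sigma_{m,m+1}$.

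So the obstacle you flagged is real and fatal to your version: strands from any two candidate ``blocks'' genuinely interact, and no single separating curve is preserved. The correct picture is a cabled (satellite-type) structure on pairs rather than a disjoint-support factorization.
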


\begin{proof}  
By applying one more conjugation by $\tau_{2m+1,2m+1}$, we obtain the left-hand braid 
in  \fullref{perred-fig}(a), which equals the right-hand braid.  
One sees that there is a collection of disjoint disks enclosing pairs of 
marked points in $S^2$ whose boundaries are invariant by
$\phi_{\what{\sigma}_{m,n}}$. 
The claim now follows from \fullref{boundary-lem}. 
\end{proof}

\begin{prop}
\label{periodic-prop}
When $n=m$, $\sigma_{m,n}$ is a periodic braid.
\end{prop}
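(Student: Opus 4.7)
The plan is to extend by one further step the inductive conjugation scheme that produced \fullref{interbraid-fig}(c), in direct analogy with the proof of \fullref{reducible-prop}. Starting from the conjugate of $\what{\sigma}_{m,m}$ depicted in \fullref{interbraid-fig}(c), I would apply one additional conjugation by a spherical braid of the form $\tau_{a,b}$ (playing the role that $\tau_{2m+1,2m+1}$ plays in the reducible case) in order to produce the braid drawn on the left of \fullref{perred-fig}(b).

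The next step is to verify by a direct spherical isotopy that this left-hand braid coincides with the braid on the right of \fullref{perred-fig}(b). That right-hand braid manifestly represents a rigid rotation of $S^2$ that cyclically permutes the marked points $\sS \cup \{p_\infty\}$. Consequently some finite power of the associated mapping class in $\sM(S^2;\sS,\{p_\infty\})$ is isotopic to the identity, so it is periodic; since the Thurston--Nielsen type is preserved under conjugation, $\phi_{\what{\sigma}_{m,m}}$ is itself periodic. By the definition of a periodic braid given just after \fullref{braidcenter-lem}, this establishes that $\sigma_{m,m}$ is periodic.

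The main obstacle will be the pictorial identification in \fullref{perred-fig}(b): one must check that after the final conjugation the crossings really do assemble into a uniform cyclic permutation of all of the marked points on the sphere, and in particular that $p_\infty$ genuinely participates in the rotation orbit rather than being fixed in a way that would leave a nontrivial pure braid behind. Once this identification is made, the finite-order conclusion is immediate from the fact that any orientation-preserving rotation of $S^2$ preserving a finite invariant set has finite order, and periodicity passes through conjugation and through the isomorphism $c_*$ of \fullref{boundary-lem}.
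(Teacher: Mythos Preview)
Your proposal is essentially the paper's own argument: pass to the spherical conjugate in \fullref{interbraid-fig}(c), identify it (as in \fullref{perred-fig}(b)) with a braid that visibly represents a finite-order rotation of $S^2$ permuting $\sS\cup\{p_\infty\}$, and then invoke \fullref{boundary-lem}. One small discrepancy: the paper does not apply an additional $\tau_{a,b}$--conjugation in the periodic case --- it simply asserts that \fullref{perred-fig}(b) is an equivalence of spherical braids (i.e., the left-hand braid is \fullref{interbraid-fig}(c) redrawn) --- so your extra conjugation step is likely superfluous, though harmless.
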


\begin{proof}  
\fullref{perred-fig}(b) shows an equivalence of spherical braids.  It is not hard to see 
that the right-hand braid is periodic in $\sB(S^2;\sS,\{p_\infty\})$.  
The rest follows from \fullref{boundary-lem}. 
\end{proof}

The general case when $n \ge m+2$ is shown in \fullref{pA-fig}.  
The transition from \fullref{pA-fig}(a) to \ref{pA-fig}(b) is given by doing successive conjugations by 
$\tau_{2m+k,2m+k}$ for $k=1,\dots,n-m$.  
The braid in \ref{pA-fig}(b) equals the braid in \ref{pA-fig}(c). 

\begin{figure}[htbp]
\labellist\tiny
\pinlabel {$2m{-}1$} [b] at 30 160
\pinlabel {$n{-}m{-}1$} [b] at 97 162
\pinlabel {$p_{\infty}$} [b] at  138 154
\pinlabel {$2m{-}1$} [b] at 270 162
\pinlabel {$n{-}m{-}1$} [b] at 335 163
\pinlabel {$p_{\infty}$} [b] at  376 156
\pinlabel {$2m{-}1$} [b] at 494 162
\pinlabel {$n{-}m{-}1$} [b] at 560 163
\pinlabel {$p_{\infty}$} [b] at  601 154
\small
\pinlabel {(a)} [b] at 70 0
\pinlabel {(b)} [b] at 310 0
\pinlabel {(c)} [b] at 540 0
\endlabellist
\begin{center}
\includegraphics[width=4.5in]{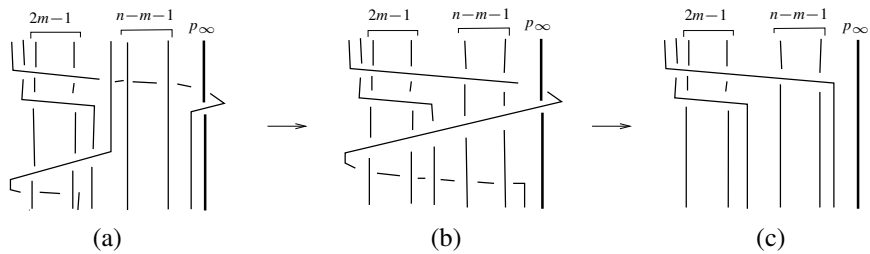}
\caption{General case}
\label{pA-fig}
\end{center}
\end{figure}

Let  $H_{m,n}$ be the graph with vertices $1, \ldots, m+n+1$  and $p$ in 
\fullref{SmnGraph-fig}(left), and 
we consider the graph map $\h_{m,n} \colon\thinspace  H_{m,n} \rightarrow H_{m,n}$ drawn 
in \fullref{SmnGraph-fig}. 
The unusual numbering of vertices comes from the left-to-right ordering 
of the strands (excluding $p_\infty$) of $\what{\sigma}_{m,n}$ 
shown in \fullref{spherical-fig}(b).  
This ordering proves useful for comparing the transition matrices of $\phi_{\beta_{m,n}}$ and 
$\phi_{\sigma_{m,n}}$ in \fullref{Salem-Boyd-section}. 

Let $\sigma'_{m,n} \in \sB(D; \sS)$ be the braid given by the preimage of the braid 
in \fullref{pA-fig}(c) under the contraction map of \fullref{boundary-lem}.  
(Hence $\sigma_{m,n}$ is obtained from the braid in \fullref{pA-fig}(c) 
by removing the strand $p_{\infty}$.) 

\begin{figure}[htbp]
\labellist\tiny
\hair=2pt
\pinlabel {$1$} [r] at 34 44
\pinlabel {$2$} [r] at 40 80
\pinlabel {$m{+}2$} [r] at 22 61
\pinlabel {$m{+}3$} [br] at 45 115
\pinlabel {$m{-}1$} [br] at 84 118
\pinlabel {$2m$} [b] at 96 140
\pinlabel {$m$} [b] at 127 128
\pinlabel {$2m{+}1$} [l] at 127 88
\pinlabel {$m{+}1$} [b] at 148 52
\pinlabel {$2m{+}2$} [t] at 165 36
\pinlabel {$m{+}n$} [t] at 200 36
\pinlabel {$m{+}n{+}1$} [b] at 216 49
\pinlabel {$p$} [t] at 114 12

\pinlabel {$2$} [r] at 337 43
\pinlabel {$3$} [r] at 345 79
\pinlabel {$m{+}3$} [r] at 324 60
\pinlabel {$m{+}4$} [b] at 348 114
\pinlabel {$m$} [b] at 386 118
\pinlabel {$2m{+}1$} [b] at 397 147
\pinlabel {$m{+}1$} [b] at 452 148
\pinlabel {$2m{+}2$} [l] at 439 90
\pinlabel {$2m{+}3$} [b] at 491 109
\pinlabel {$p$} [t] at 410 19
\pinlabel {$m{+}2$} [t] at 453 45
\pinlabel {$m{+}n{+}1$} [b] at 510 88
\pinlabel {$1$} [t] at 525 40
\endlabellist
\begin{center}
\includegraphics[width=5in]{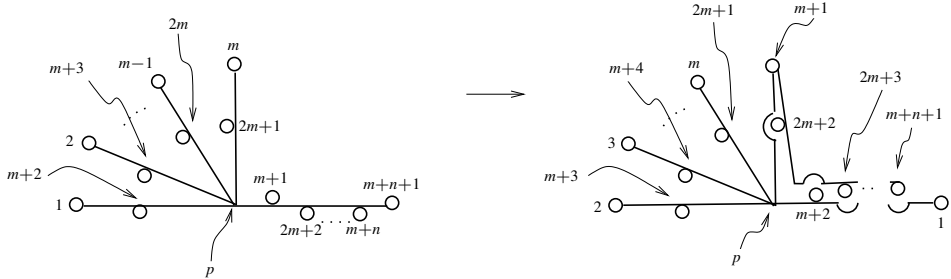}
\caption{Graph map $\h_{m,n}$ for $\phi_{\sigma_{m,n}}$}
\label{SmnGraph-fig}
\end{center}
\end{figure}

 \begin{prop}
 \label{SmnGraph-prop} 
 For $n \ge m+2$, the graph map $\h_{m,n} \colon\thinspace H_{m,n} \rightarrow H_{m,n}$ 
 is the induced graph map for $\phi_{\sigma'_{m,n}}$ satisfying {\rm(BH1)} and {\rm(BH2)}. 
 \end{prop}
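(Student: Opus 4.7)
The plan is to mirror the proof of Proposition~\ref{BmnGraph-prop} for $\beta_{m,n}$. First I would verify that the fibered surface $\mathcal{F}(H_{m,n})$ carries a homeomorphism representing $\phi_{\sigma'_{m,n}}$ by tracking, strand by strand, how the braid in \fullref{pA-fig}(c) acts on the edges of $H_{m,n}$ drawn in \fullref{SmnGraph-fig}. The deliberately chosen indexing of vertices (matching the left-to-right ordering of the strands of $\what\sigma_{m,n}$ excluding $p_\infty$) is set up precisely so that this check reduces to reading off the image of each loop and each non-loop edge directly from the figure. This simultaneously exhibits $\h_{m,n}$ as the induced graph map.

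Next I would check (BH1). As in the proof for $\beta_{m,n}$, back tracks in an iterate can only be created at vertices where two oriented edges emanating from the same vertex have the same derivative; the minimality argument shows that if $\h_{m,n}^k$ has back track for some minimal $k$, then there is already an edge $e \in \DE(H_{m,n})$ with $\h_{m,n}^{k-1}(e) = \ldots \wbar e_1 \cdot e_2 \ldots$ and $D_{\h_{m,n}}(e_1) = D_{\h_{m,n}}(e_2)$. I would enumerate the handful of vertices at which the derivatives collapse (these are essentially the vertices where the single counter-clockwise wrap of \fullref{pA-fig}(c) reattaches to the rest of the braid), list for each such vertex the resulting pairs of edges with common derivative, and then verify directly from \fullref{SmnGraph-fig} that no image $\h_{m,n}(e)$ contains such a pair as consecutive letters.

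For (BH2), I would exhibit a single ``sweeping'' real edge $e_\star$ (for instance the edge that records the wrap of the $p_\infty$-strand around the remaining strands) and check that for some explicit $N=N(m,n)$, the edge path $\h_{m,n}^N(e_\star)$ traverses every real edge of $H_{m,n}$ in either direction, and conversely that every real edge has some iterate whose image contains $e_\star$. This gives irreducibility of $\sT$, and since one of the entries in a sufficiently high power of $\sT$ is visibly strictly greater than $1$ (the edge $e_\star$ is covered multiple times), Perron--Frobenius gives $\lambda(\sT) > 1$.

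The main obstacle is purely the bookkeeping in the efficiency check: the graph $H_{m,n}$ has $m+n+2$ vertices and the local structure of $\h_{m,n}$ at the two ``hub'' vertices (around index $m+1$ and around index $2m+1$) is $m$- and $n$-dependent. The verification has to be carried out uniformly in $m,n$ with $n \ge m+2$, and one needs the strict inequality $n \ge m+2$ precisely to prevent the configurations that give the reducible ($n=m+1$) and periodic ($n=m$) cases of \fullref{reducible-prop} and \fullref{periodic-prop} from producing genuine back tracks. Once (BH1) and (BH2) are in hand, \fullref{BH-thm} applied via \fullref{conj-lem} delivers the pseudo-Anosov conclusion for $\sigma_{m,n}$ itself.
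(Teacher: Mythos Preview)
Your proposal is correct and follows essentially the same approach as the paper: the paper's proof simply states that $\mathcal{F}(H_{m,n})$ carries a homeomorphism of $\phi_{\sigma'_{m,n}}$ and that the verification of (BH1) and (BH2) is ``similar to that of \fullref{BmnGraph-prop}.'' Your outline is a faithful expansion of what that similarity entails, including the back-track analysis at the few vertices where derivatives can coincide and the irreducibility argument via a single sweeping real edge.
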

 
\begin{proof} 
One can see that the fibered surface $\mathcal{F}(H_{m,n})$ carries a homeomorphism of 
$\phi_{\sigma'_{m,n}}$, and hence 
$\h_{m,n} \colon\thinspace H_{m,n} \rightarrow H_{m,n}$ is the induced graph map for $\phi_{\sigma'_{m,n}}$. 
The proof that $\h_{m,n}$ satisfies {\rm(BH1)} and {\rm(BH2)} 
is similar to that of \fullref{BmnGraph-prop}.
\end{proof}

\begin{proof}[Proof of \fullref{Smn-classification-thm}]
By \fullref{Smn-sym-prop}, it suffices to classify the braids $\sigma_{m,n}$ 
with $n \ge m \ge 1$.  
By \fullref{reducible-prop}, $\sigma_{m,n}$ is reducible if $n = m+1$, and 
by \fullref{periodic-prop} 
$\sigma_{m,n}$ is periodic if $n=m$.  
In all other cases, \fullref{SmnGraph-prop} shows 
that $\sigma_{m,n}$ is pseudo-Anosov since $\sigma_{m,n}$ is conjugate to $\sigma'_{m,n}$, 
and \fullref{Smn=Snm-lem} implies that 
$\lambda(\sigma_{m,n}) = \lambda(\sigma_{n,m})$. 
\end{proof}

\subsection{Train tracks} 
\label{TT-section}

By using the graph smoothing in \fullref{Bestvina-Handel-section}, 
the train track $\tau_{\g_{m,n}}$ for $\phi_{\beta_{m,n}}$ and the train track $\tau_{\h_{m,n}}$ 
for $\phi_{\sigma'_{m,n}}$ are  given in Figures~\ref{Bmn-TT} and \ref{Smn-TT}. 
Applying  \fullref{prongs-lem} to $\tau_{\g_{m,n}}$ and $\tau_{\h_{m,n}}$, 
we immediately see the following. 

\begin{figure}[htbp]
\labellist\small
\pinlabel {$(m{+}1)$--gon} [t] at 195 592
\pinlabel {$(n{+}1)$--gon} [b] at 392 660
\endlabellist
\begin{center}
\includegraphics[width=2.5in]{\figdir/b-train_track}
\caption{Train track $\tau_{\g_{m,n}}$}
\label{Bmn-TT}
\end{center}
\end{figure}

\begin{figure}[htbp]
\labellist\small
\pinlabel {$(m{+}1)$--gon} [t] at 210 617
\pinlabel {$(n{-}m{-}2)$} [t] at 406 614
\endlabellist
\begin{center}
\includegraphics[height=1.3in]{\figdir/s-train_track}
\caption{Train track $\tau_{\h_{m,n}}$} 
\label{Smn-TT}
\end{center}
\end{figure}

\begin{lem}
\label{Bmn-prongs-lem} 
The invariant foliations associated to  the pseudo-Anosov representative 
$\smash{\Phi_{\what{\beta}_{m,n}}}$ of 
 $\smash{\phi_{\what{\beta}_{m,n}}} \in \sM(S^2; \sS,\{p_\infty\})$ have a $1$--pronged singularity 
 at each point of $\sS \cup \{p_{\infty}\}$, 
an $(m+1)$--pronged singularity at a point $p \in S^2 \setminus (\sS \cup \{p_{\infty}\})$, and 
an $(n+1)$--pronged singularly at a point $q \in S^2 \setminus (\sS \cup \{p_{\infty}\})$. 
\end{lem}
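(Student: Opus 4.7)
The plan is to read the prong structure directly off the train track $\tau_{\g_{m,n}}$ displayed in Figure \ref{Bmn-TT} by applying Lemma \ref{prongs-lem}, and then transfer the answer from $D$ to $S^2$ via the contraction map of Lemma \ref{boundary-lem}. By Proposition \ref{BmnGraph-prop}, $\g_{m,n}$ is the induced graph map for $\phi_{\beta_{m,n}}$ satisfying (BH1) and (BH2), so the train track obtained by graph smoothing carries the invariant foliations $\sF^\pm$ of the pseudo-Anosov representative.

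First I would enumerate the connected components of $D\setminus\tau_{\g_{m,n}}$. From the figure and the gate structure of $\g_{m,n}$ at each vertex, these fall into three kinds: (i) for each marked point $p_i\in\sS$, a small open disk bounded by the smoothed peripheral loop $P_i$ and containing exactly one cusp; (ii) one open $(m+1)$-gon surrounding the interior vertex $p$ of $G_{m,n}$, and one open $(n+1)$-gon surrounding the interior vertex $q$; (iii) one half-open annulus whose outer boundary is $\partial D$, with exactly one exterior cusp. Applying Lemma \ref{prongs-lem} to (i) gives a $1$-pronged singularity of $\sF^\pm$ at each $p_i$, applying it to (ii) gives an $(m+1)$-pronged singularity at $p$ and an $(n+1)$-pronged singularity at $q$, and applying it to (iii) shows that $\partial D$ is $1$-pronged.

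To finish, I would push this foliation data forward along the contraction map $c\colon D\to S^2$ that crushes $\partial D$ to the point $p_\infty$. By Lemma \ref{boundary-lem}, $c_*$ preserves Thurston--Nielsen type and carries an $n$-pronged boundary component to an $n$-pronged singularity, so $\sF^\pm$ descends to singular foliations on $S^2$ with a $1$-pronged singularity at $p_\infty$, a $1$-pronged singularity at each $p_i\in\sS$, and $(m+1)$- and $(n+1)$-pronged singularities at $p$ and $q$ respectively.

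The main obstacle is the bookkeeping in step (ii): verifying that after the graph smoothing, the faces around $p$ and $q$ really pick up exactly $m+1$ and $n+1$ cusps. This amounts to checking that the edges emanating from $p$ (respectively $q$) fall into $m+1$ (respectively $n+1$) distinct gates under the equivalence $D_\g^k(e_1)=D_\g^k(e_2)$, which is a direct inspection of the action of $\g_{m,n}$ on $\DE(G_{m,n})$ shown in Figure \ref{BmnGraph-fig}. Once the gate count is confirmed, Steps T1--T3 of the smoothing construction place one cusp per gate along the boundary of the corresponding polygon, and the prong counts follow.
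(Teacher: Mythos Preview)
Your approach is correct and matches the paper's: the paper simply states that the train track $\tau_{\g_{m,n}}$ is as in \fullref{Bmn-TT} and that ``applying \fullref{prongs-lem} to $\tau_{\g_{m,n}}\dots$ we immediately see'' the lemma, leaving all of the component enumeration and the transfer via \fullref{boundary-lem} implicit. Your write-up spells out exactly those steps; the only caveat is that the phrase ``one cusp per gate'' is a slight shortcut---cusps arise where two infinitesimal edges from Step~T3 meet, and what you really need is that the infinitesimal polygon formed at $p$ (respectively $q$) has $m+1$ (respectively $n+1$) such meeting points, which is what the $(m{+}1)$--gon and $(n{+}1)$--gon labels in \fullref{Bmn-TT} record.
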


\begin{lem}
\label{Smn-prongs-lem}  
For $n \ge m+2$, the invariant 
foliations associated to the pseudo-Anosov representative
$\Phi_{\what{\sigma}_{m,n}}$ of 
$\phi_{\what{\sigma}_{m,n}} \in \sM(S^2; \sS, \{p_{\infty}\})$ have a $1$--pronged singularity 
at each point of $\sS$, an $n$--pronged singularity  at  $p_{\infty}$, and  
an $(m+1)$--pronged singularity  at a point $p \in S^2 \setminus (\sS \cup \{p_{\infty}\})$. 
\end{lem}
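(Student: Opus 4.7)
The plan is to apply Lemma \ref{prongs-lem} to the train track $\tau_{\h_{m,n}}$ pictured in Figure \ref{Smn-TT}, and then transfer the resulting prong data from the disk $D$ to the sphere $S^2$ via the contraction map $c$ of Lemma \ref{boundary-lem}. Since $\sigma_{m,n}$ is conjugate to $\sigma'_{m,n}$ in $\sB(D;\sS)$, their pseudo-Anosov representatives have conjugate invariant foliations and hence the same singularity data; and $c_\ast$ sends $\partial D$ to $p_\infty$ while preserving all prong orders.

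First I would enumerate the connected components of $D \setminus \tau_{\h_{m,n}}$. Each loop of $H_{m,n}$ around a marked point $p_i \in \sS$ encloses a small open disk with exactly one cusp on its boundary, so by Lemma \ref{prongs-lem} we obtain a $1$-pronged singularity at each point of $\sS$. Figure \ref{Smn-TT} also exhibits an $(m+1)$-gon open disk component whose $m+1$ cusps give the promised $(m+1)$-pronged singularity at an interior point $p \in S^2 \setminus (\sS \cup \{p_\infty\})$.

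It remains to determine the prong order at $p_\infty$. Since $p_\infty$ is the image of $\partial D$ under $c$, Lemmas \ref{prongs-lem} and \ref{boundary-lem} say that this order equals the number of exterior cusps of $\tau_{\h_{m,n}}$ along $\partial D$. I would read off the gates at each vertex of $H_{m,n}$ from the derivative $D_{\h_{m,n}}$, apply Steps T1--T3 of the smoothing construction to $\h_{m,n}$, and count the exterior cusps directly. As a cross-check, the Euler--Poincar\'e index formula for a singular foliation on $S^2$ gives
\[
\sum (2 - k_i) \;=\; 2\chi(S^2) \;=\; 4,
\]
and plugging in the $m+n+1$ one-pronged points of $\sS$ together with the $(m+1)$-pronged point $p$ forces the order at $p_\infty$ to be exactly $n$, independent of the figure.

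The main obstacle is the careful combinatorial bookkeeping required to confirm that $\tau_{\h_{m,n}}$ has no additional closed disk components beyond the small loops around $\sS$ and the single $(m+1)$-gon, so that all remaining cusps lie on the unique half-open annular component bordering $\partial D$. In particular, the region indicated by ``$(n-m-2)$'' in Figure \ref{Smn-TT} must be verified to adjoin $\partial D$, and the boundary case $n=m+2$ should be inspected to make sure the construction does not degenerate.
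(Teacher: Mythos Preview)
Your approach is essentially the same as the paper's: the paper simply states that the train track $\tau_{\h_{m,n}}$ is as in Figure~\ref{Smn-TT} and that applying Lemma~\ref{prongs-lem} immediately yields the prong data, with the passage to $S^2$ implicit via Lemma~\ref{boundary-lem}. Your write-up is more explicit about the enumeration of components and adds the Euler--Poincar\'e cross-check, which is a pleasant bonus not in the paper but not a genuinely different method.
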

By Lemmas \ref{closure2-lem} and \ref{Smn-prongs-lem}, we have the following. 

\begin{cor}
\label{Overline-Smn-lem}  
For $n \ge m+2$, $\wbar{\sigma}_{m,n}$ is pseudo-Anosov, and 
$\lambda(\sigma_{m,n}) = \lambda(\wbar{\sigma}_{m,n})$. 
\end{cor}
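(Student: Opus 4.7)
The plan is to deduce this corollary as a direct application of Lemma \ref{closure2-lem} to the forgetful map $\sB(S^2;\sS,\{p_\infty\}) \to \sB(S^2;\sS)$ from \eqref{overline-eqn}, using the prong data computed in Lemma \ref{Smn-prongs-lem}.

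First I would unravel the definitions. Since $n \ge m+2$, Theorem \ref{Smn-classification-thm} guarantees that $\sigma_{m,n}$ is pseudo-Anosov, which by definition means the corresponding mapping class $\phi_{\what{\sigma}_{m,n}} \in \sM(S^2;\sS,\{p_\infty\})$ is pseudo-Anosov, and moreover $\lambda(\sigma_{m,n}) = \lambda(\phi_{\what{\sigma}_{m,n}}) = \lambda(\Phi_{\what{\sigma}_{m,n}})$, where $\Phi_{\what{\sigma}_{m,n}}$ is the pseudo-Anosov representative. The braid $\wbar{\sigma}_{m,n}$ is, by construction, the image of $\what{\sigma}_{m,n}$ under the forgetful map that drops the marked point $p_\infty$, so its mapping class $\phi_{\wbar{\sigma}_{m,n}}$ is the image of $\phi_{\what{\sigma}_{m,n}}$ under the corresponding forgetful map on mapping class groups (with partition $\sS_1 = \sS$, $\sS_2 = \{p_\infty\}$).

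Next, I would verify the hypothesis of Lemma \ref{closure2-lem}, which in this setting requires that the pseudo-Anosov representative $\Phi_{\what{\sigma}_{m,n}}$ have no $1$--pronged singularity at $p_\infty$. This is precisely the content of Lemma \ref{Smn-prongs-lem}, which shows that $p_\infty$ is an $n$--pronged singularity of the invariant foliations, and since $n \ge m+2 \ge 3$, this is certainly not a $1$--prong.

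With the hypothesis verified, Lemma \ref{closure2-lem} immediately yields both conclusions at once: the forgotten mapping class $\phi_{\wbar{\sigma}_{m,n}}$ is pseudo-Anosov (equivalently, $\wbar{\sigma}_{m,n}$ is pseudo-Anosov as a braid), and its dilatation agrees with that of $\phi_{\what{\sigma}_{m,n}}$, hence with $\lambda(\sigma_{m,n})$. There is no substantive obstacle; the only thing to be careful about is matching the conventions of the two forgetful maps \eqref{widehat-eqn} and \eqref{overline-eqn} with the abstract forgetful map of Lemma \ref{closure2-lem}, which is routine.
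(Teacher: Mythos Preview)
Your proposal is correct and follows exactly the paper's approach: the paper simply states that the corollary follows from Lemmas~\ref{closure2-lem} and~\ref{Smn-prongs-lem}, and you have spelled out precisely how those two lemmas combine. Your observation that $n \ge m+2 \ge 3$ ensures the $n$--pronged singularity at $p_\infty$ is not $1$--pronged is the key point, and it matches the paper's reasoning.
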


A pseudo-Anosov map $\Phi$ is said to be {\it orientable} if 
the stable and unstable foliations associated to $\Phi$ are orientable. 

\begin{prop}
\label{Bmn-lift-prop} 
Let $m+n=2g$. 
If both $m$ and $n$ are odd, 
there is a pseudo-Anosov element of $\sM_g$ whose pseudo-Anosov representative is 
orientable with the same dilatation as $\beta_{m,n}$. 
\end{prop}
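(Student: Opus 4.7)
The plan is to apply \fullref{spectrum-prop} to lift $\phi_{\what{\beta}_{m,n}}$ through the hyperelliptic double cover $F_g \to S^2$ branched over $\wwhat{\sS} = \sS \cup \{p_\infty\}$, and then to argue that the lifted invariant foliations are globally orientable. Let $\phi' \in \sM(F_g; {\wwhat{\sS}}')$ denote the hyperelliptic, pseudo-Anosov lift constructed in the proof of \fullref{spectrum-prop}, where ${\wwhat{\sS}}'$ is the preimage of $\wwhat{\sS}$; then $\lambda(\phi') = \lambda(\beta_{m,n})$, and the invariant foliations of the pseudo-Anosov representative $\Phi'$ are the pullbacks of the invariant foliations $\sF^{\pm}$ of $\Phi_{\what{\beta}_{m,n}}$. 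I denote the pullbacks by $(\sF^{\pm})'$. The goal is to show $(\sF^{\pm})'$ are orientable and then pass to $\sM_g$ by the forgetful map.

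The first step is to read off the prong orders of $(\sF^{\pm})'$ from \fullref{Bmn-prongs-lem}. Under a branched double cover $z = w^2$, a $k$--pronged singularity of a measured foliation at a branch point lifts to a $2k$--pronged singularity upstairs, while at a non-branch point it lifts to two $k$--pronged singularities (this follows from the local model $z^{k-2}\,dz^2$ pulling back to $4w^{2k-2}\,dw^2$). Applied here, the $1$--pronged singularities of $\sF^{\pm}$ at the $2g+2$ points of $\wwhat{\sS}$ become regular points of $(\sF^{\pm})'$, the $(m+1)$--pronged singularity at $p$ lifts to two $(m+1)$--pronged singularities, and the $(n+1)$--pronged singularity at $q$ lifts to two $(n+1)$--pronged singularities. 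When both $m$ and $n$ are odd, every singularity of $(\sF^{\pm})'$ is even-pronged, which is the necessary local condition for orientability.

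To upgrade this to global orientability I would invoke the orientation double cover of $\sF^{\pm}$. For a measured foliation with isolated singularities, orientability is obstructed by a class in $\mathbb{Z}/2$--cohomology of the complement of the singular set, and the associated orientation double cover is branched precisely at the odd-pronged singularities. On $S^2$, the odd-pronged singularities of $\sF^{\pm}$ are exactly the points of $\wwhat{\sS}$ when $m$ and $n$ are odd, since $(m+1)$ and $(n+1)$ are then even. A branched double cover of $S^2$ is determined up to equivalence by its (even-cardinality) branch locus, so the orientation double cover of $\sF^{\pm}$ coincides with the hyperelliptic cover $F_g \to S^2$. Hence the pullbacks $(\sF^{\pm})'$ are orientable on $F_g$.

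The main obstacle will be making the identification of the orientation cover with the hyperelliptic cover fully rigorous, which amounts to checking that both double covers of $S^2 \setminus \wwhat{\sS}$ are classified by the unique nontrivial homomorphism $H_1(S^2 \setminus \wwhat{\sS}; \mathbb{Z}/2) \to \mathbb{Z}/2$ sending each meridian around a point of $\wwhat{\sS}$ to the generator. Once $(\sF^{\pm})'$ are known to be orientable on $F_g$, the forgetful map $\sM(F_g; {\wwhat{\sS}}') \to \sM_g$ sends $\phi'$ to a pseudo-Anosov element of $\sM_g$ with the same dilatation by \fullref{closure2-lem} (applicable because all points of ${\wwhat{\sS}}'$ are regular for $(\sF^{\pm})'$), and orientability is preserved since only regular points are forgotten. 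This produces the required orientable pseudo-Anosov element of $\sM_g$ with dilatation $\lambda(\beta_{m,n})$.
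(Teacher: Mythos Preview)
Your proof is correct and follows essentially the same approach as the paper: lift $\Phi_{\what\beta_{m,n}}$ through the hyperelliptic double cover branched over $\wwhat{\sS}$, invoke \fullref{Bmn-prongs-lem} to see that all prongs upstairs are even when $m$ and $n$ are odd, and then establish orientability by analyzing the $\mathbb{Z}/2$--monodromy. The only difference is cosmetic: the paper checks directly that the homomorphism $\pi_1(F_g)\to\mathbb{Z}/2$ induced by the lifted foliation is trivial by looking at meridional generators of $\pi_1(S^2\setminus\wwhat{\sS})$, while you package the same computation as the statement that the orientation double cover of $\sF^\pm$ coincides with the hyperelliptic cover because both are classified by the homomorphism sending every meridian of $\wwhat{\sS}$ to the nontrivial element.
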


\begin{proof}  
Let $\Phi'_{\smash{\what{\beta}_{m,n}}}$ be the lift of
$\Phi_{\smash{\what{\beta}_{m,n}}}$ 
to the double branched covering $F_g$ of $S^2$ branched along 
$\sS \cup \{p_\infty\}$, and denote by $\wwhat{\sS'}$ the preimage of
$\wwhat{\sS}=\sS \cup \{p_\infty\}$ in $F_g$. 
By the proof of \fullref{spectrum-prop}, 
$\Phi'_{\smash{\what{\beta}_{m,n}}}$ is a pseudo-Anosov map with 
$\lambda\bigl(\Phi'_{\smash{\what{\beta}_{m,n}}}\bigr)=
\lambda\bigl(\Phi_{\smash{\what{\beta}_{m,n}}}\bigr)= 
\lambda\bigl(\beta_{m,n}\bigr)$. 
By \fullref{Bmn-prongs-lem}, 
$\Phi_{\smash{\what{\beta}_{m,n}}}'$ has an 
invariant foliation $\mathcal{F}^{\pm}$ with two $(m+1)$--pronged singularities and 
two $(n+1)$--pronged singularities at points of $F_g \setminus \wwhat{\sS}'$, and 
regular points of $\wwhat{\sS}'$. 
Hence all singularities of $\mathcal{F}^{\pm}$ are even--pronged. 

To show that $\mathcal{F}^{\pm}$ is orientable, it suffices to note that 
the natural map from  the fundamental group of $F_g$ to $Z/2Z$ induced by $\mathcal{F}^{\pm}$ 
is trivial. 
Consider the invariant foliation on $S^2$ associated to
$\Phi_{\smash{\what{\beta}_{m,n}}}$ 
with $1$--pronged singularity at each point of $\wwhat{\sS}$ and even--pronged singularity 
elsewhere. 
The punctured sphere $S^2 \setminus \wwhat{\sS}$ has fundamental group 
generated by loops emanating from a basepoint, following a path $\gamma_p$ 
to a point near a marked point $p \in \wwhat{\sS}$, going around a small circle 
centered at $p$, then returning in the reverse direction along $\gamma_p$ back to the basepoint. 
Consider the double unbranched covering of $S^2 \setminus \wwhat{\sS}$. 
Then by construction,  the natural map from the fundamental 
group of the covering surface to $Z/2Z$ defined by the lifted foliation  is trivial.   
The same is true for the fundamental group of the branched covering surface $F_g$, 
and hence the natural map from the fundamental group of $F_g$ to $Z/2Z$ defined by the lifted foliation 
$\mathcal{F}^{\pm}$ is trivial. 
\end{proof}

\begin{prop}
\label{Smn-lift-prop} 
Let $m+n=2g$. 
For each $m,n \ge 1$ with $|m-n| \ge 2$, 
there is a pseudo-Anosov element of $\sM_g$ whose pseudo-Anosov representative is 
orientable with the same dilatation as $\sigma_{m,n}$. 
\end{prop}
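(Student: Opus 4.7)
The plan is to adapt the proof of \fullref{Bmn-lift-prop} to $\sigma_{m,n}$, making a parity--dependent choice of branch locus that exploits the singularity data given by \fullref{Smn-prongs-lem}. That lemma shows that the invariant foliations $\mathcal{F}^\pm$ of $\Phi_{\what{\sigma}_{m,n}}$ have a $1$--pronged singularity at each of the $2g+1$ points of $\sS$, an $n$--pronged singularity at $p_\infty$, and an $(m+1)$--pronged singularity at a point $p \in S^2 \setminus (\sS \cup \{p_\infty\})$. Since $m+n = 2g$, the integers $m$ and $n$ share a common parity, and exactly one of $\{p, p_\infty\}$ is an odd--pronged singular point. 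I would set $B = \sS \cup \{p\}$ when $m$ and $n$ are both even, and $B = \sS \cup \{p_\infty\}$ when both are odd, so that in either case $|B| = 2g+2$ and $B$ is precisely the set of odd--pronged singular points of $\mathcal{F}^\pm$.

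Next I would take the double cover $\pi\colon\thinspace F \to S^2$ branched over $B$. Riemann--Hurwitz gives $F \cong F_g$, and since $\Phi_{\what{\sigma}_{m,n}}$ permutes singularities of equal prong order it preserves $B$ setwise and lifts to a homeomorphism $\Phi'$ of $F_g$ whose invariant foliations are the lifts of $\mathcal{F}^\pm$. At branch points the prong order doubles and at non--branch singular points it is preserved, so by the choice of $B$ all singularities of the lifted foliations are even--pronged; as in the proof of \fullref{spectrum-prop}, $\Phi'$ is pseudo-Anosov with $\lambda(\Phi') = \lambda(\sigma_{m,n})$. To land in $\sM_g$ I would apply \fullref{closure2-lem} to the forgetful map that drops the preimages of $\wwhat{\sS} = \sS \cup \{p_\infty\}$: each preimage of a point of $\sS$ is a regular $2$--pronged point (since $\sS \subset B$), and each preimage of $p_\infty$ is either the unique $2n$--pronged branch point (when $m,n$ are odd) or one of two $n$--pronged points with $n \ge 4$ even (when $m,n$ are even, since $n \ge m+2 \ge 4$), so the $1$--prong hypothesis of \fullref{closure2-lem} is satisfied in both cases.

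Orientability of the lifted foliations is then argued exactly as in \fullref{Bmn-lift-prop}. The $\mathbb{Z}/2\mathbb{Z}$--monodromy representation of $\mathcal{F}^\pm$, viewed on the complement in $S^2$ of its singular set, sends loops around odd--pronged singularities to the nontrivial element and loops around even--pronged singularities to zero; by the choice of $B$ this representation coincides with the covering homomorphism associated to $\pi$, so its pullback along the double branched cover gives the trivial map on $\pi_1(F_g)$, and the lifted foliations are orientable. The main bookkeeping step is the parity case split; the extra flexibility relative to $\beta_{m,n}$ is precisely that either of the two non--$\sS$ singularities may be branched, which is why the conclusion holds for all $|m-n| \ge 2$ rather than only when $m$ and $n$ are both odd.
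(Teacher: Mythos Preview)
Your argument is correct and follows essentially the same route as the paper: branch the sphere along $\sS$ together with whichever of $\{p,p_\infty\}$ carries the odd prong order, lift, and observe that all prongs upstairs are even so that orientability follows as in \fullref{Bmn-lift-prop}. Your write-up is in fact more careful than the paper's in two places---you explicitly verify that $\Phi_{\what{\sigma}_{m,n}}$ preserves the branch locus (by prong-order considerations) and you explicitly invoke \fullref{closure2-lem} to pass to $\sM_g$---whereas the paper leaves these implicit. The only cosmetic omission is that you use \fullref{Smn-prongs-lem} and the inequality $n\ge m+2\ge 4$ without first stating the reduction to $n\ge m+2$ via \fullref{Smn=Snm-lem}; the paper makes that reduction explicit at the outset.
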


\begin{proof}  
By \fullref{Smn=Snm-lem}, we can assume $n \ge m+2$. 
\fullref{Smn-prongs-lem} says that 
the invariant foliations associated to $\smash{\Phi_{\what{\sigma}_{m,n}}}$
have an $n$--pronged singularity and an $(m{+}1)$--pronged singularity. 
Since $m+n=2g$, $(m+1)$ and $n$ have the opposite parity. 

Let $F_g$ be the branched covering of $S^2$ branched along $\sS$
and either an $(m{+}1)$--pronged singularity if $(m{+}1)$ is odd,
or $p_\infty$ if $n$ is odd.  Let $\smash{\Phi_{\what{\sigma}_{m,n}}'}$
be the lift of $\smash{\Phi_{\what{\sigma}_{m,n}}}$  to $F_g$.  Then
$\smash{\Phi_{\what{\sigma}_{m,n}}'}$ is pseudo-Anosov with dilatation equal
that of $\smash{\Phi_{\what{\sigma}_{m,n}}}$.  Furthermore, by our choice of
branch points, the invariant foliations have only even order prongs.
One shows that  they are orientable by using the same arguments as in
the proof of \fullref{Bmn-lift-prop}.
\end{proof}

We conclude this section by relating $\g_{m,n}$ and $\h_{m,n}$
in a way that is compatible with the conjugations used in
\fullref{symmetry-section}.

\begin{figure}[htbp]
\labellist\tiny
\pinlabel {$1$} [r] at 16 68
\pinlabel {$2$} [r] at 21 91
\pinlabel {$m{-}1$} [b] at 58 119
\pinlabel {$m$} [b] at 81 127
\pinlabel {$p$} [t] at 81 67
\pinlabel {$m{+}1$} [b] at 121 76
\pinlabel {$m{+}2$} [tr] at 157 8
\pinlabel {$m{+}3$} [t] at 180 14
\pinlabel {$m{+}n$} [l] at 213 46
\pinlabel {$m{+}n{+}1$} [l] at 220 66
\pinlabel {$q$} at 171 46
\pinlabel {$1$} [b] at 487 88
\pinlabel {$2$} [r] at 288 77
\pinlabel {$3$} [r] at 291 100
\pinlabel {$m$} [b] at 328 131
\pinlabel {$m{+}1$} [b] at 349 137
\pinlabel {$m{+}2$} [tr] at 382 65
\pinlabel {$m{+}3$} [t] at 432 13
\pinlabel {$m{+}n{+}1$} [l] at 487 52
\pinlabel {$q$} at 439 50
\endlabellist
\begin{center}
\includegraphics[width=5in]{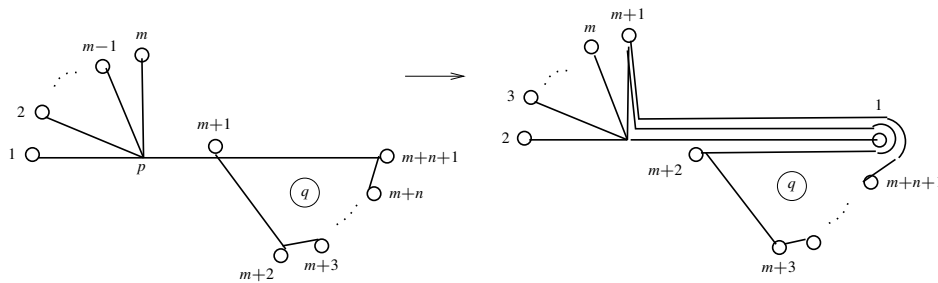}
\caption{Graph map $\g_{m,n}'$ for $\phi_{m,n}$}
\label{BmnGraphNew-fig}
\end{center}
\end{figure}

Since $q$ is a fixed point for $\g_{m,n}$ (see \fullref{BmnGraph-fig}),
$\smash{\Phi_{\what{\beta}_{m,n}}}$ determines a mapping class $\phi_{m,n} =
\bigl[\smash{\Phi_{\what{\beta}_{m,n}}}\bigr]$ in $\sM(S^2;\sS,\{q\}, \{p_\infty\})$.
Let $\g_{m,n}' \colon\thinspace  G_{m,n}' \rightarrow G_{m,n}'$ be the
graph map obtained from $\g_{m,n}$ after  puncturing $D$ at $q$ as in
\fullref{BmnGraphNew-fig}.  Then $\g'_{m,n} \colon\thinspace G'_{m,n}
\rightarrow G'_{m,n}$ is the induced graph (satisfying  {\rm(BH1)}
and {\rm(BH2)}) for the mapping class which is the preimage of
$\phi_{m,n}$ under the map from $\sM(D;\sS,\{q\})$ to $\sM(S^2;\sS,\{q\},
\{p_\infty\})$.  Identify $\g'_{m,n}$ with the graph map on $S^2$ obtained
by pushed forward by the contraction map in \fullref{boundary-lem}.

\begin{figure}[htbp]
\labellist\tiny
\hair=2pt
\pinlabel {$1$} [r] at 28 44
\pinlabel {$m{+}2$} [r] at 22 60
\pinlabel {$2$} [r] at 35 80
\pinlabel {$m{+}3$} [br] at 44 115
\pinlabel {$m{-}1$} [br] at 79 122
\pinlabel {$2m$} [b] at 98 142
\pinlabel {$m$} [b] at 127 136
\pinlabel {$p$} [t] at 115 13
\pinlabel {$m{+}1$} [tl] at 149 44
\pinlabel {$2m{+}1$} [l] at 139 94
\pinlabel {$2m{+}2$} [b] at 161 61
\pinlabel {$m{+}n$} [bl] at 192 62
\pinlabel {$m{+}n{+}1$} [tl] at 225 19
\pinlabel {$\scriptstyle\infty$} at 230 53

\pinlabel {$1$} [tl] at 527 38
\pinlabel {$2$} [r] at 325 46
\pinlabel {$3$} [r] at 334 82
\pinlabel {$m{+}3$} [r] at 321 61
\pinlabel {$m{+}4$} [b] at 343 113
\pinlabel {$m$} [br] at 377 121
\pinlabel {$2m{+}1$} [b] at 391 146
\pinlabel {$m{+}1$} [bl] at 448 149
\pinlabel {$p$} [t] at 407 20
\pinlabel {$2m{+}2$} [bl] at 448 96
\pinlabel {$2m{+}3$} [b] at 489 111
\pinlabel {$m{+}n$} [bl] at 503 96
\pinlabel {$m{+}n{+}1$} [bl] at 516 82
\pinlabel {$\scriptstyle\infty$} at 536 63
\endlabellist
\begin{center}
\includegraphics[width=5in]{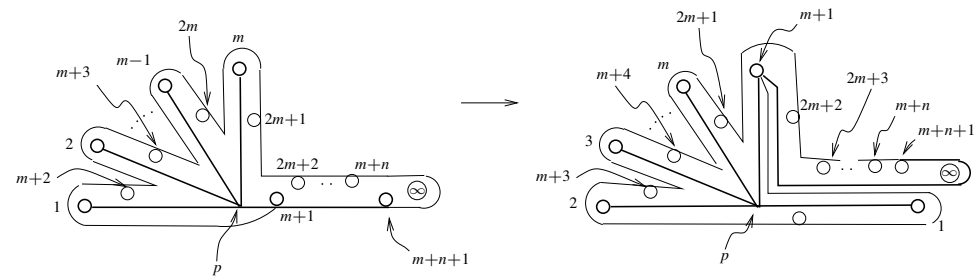}
\caption{Exchanging the roles of $q$ and $p_\infty$ for $\g_{m,n}'$: 
$\infty$ in the figure indicates $p_{\infty}$}
\label{wrap-fig}
\end{center}
\end{figure}

Exchanging the roles of $q$ and $p_\infty$ (i.e., bringing 
$p_\infty$ into the visual plane) yields the  graph map shown in  \fullref{wrap-fig}, 
which is equivalent to $\g_{m,n}'$. 
Now remove  $p_\infty$,  and consider the graph map 
\begin{eqnarray}
\label{graphmap-map}
\f_{m,n} \colon\thinspace  G_{m,n}' \rightarrow H_{m,n} 
\end{eqnarray}
obtained by a natural identification of edges of $G'_{m,n}$ to edges of the graph in 
\fullref{BmnGraphNew-fig}(left) removing $p_\infty$. 
\fullref{identify-fig} shows the natural projection map applied to the image 
of the edges of $G'_{m,n}$ under $\f_{m,n}$. 
 The map $\h_{m,n} \colon\thinspace H_{m,n} \rightarrow H_{m,n}$ 
in \fullref{SmnGraph-fig} is the one induced by pushing forward 
$\g_{m,n}'$  by the map $\f_{m,n}$. 

\begin{figure}[htbp]
\labellist\tiny
\hair=2pt
\pinlabel {$1$} [r] at 28 46
\pinlabel {$2$} [r] at 36 82
\pinlabel {$m{+}2$} [r] at 22 62
\pinlabel {$m{+}3$} [br] at 45 116
\pinlabel {$m{-}1$} [br] at 84 124
\pinlabel {$2m$} [b] at 96 142
\pinlabel {$m$} [b] at 127 133
\pinlabel {$2m{+}1$} [l] at 139 94
\pinlabel {$2m{+}2$} [b] at 160 62
\pinlabel {$m{+}n$} [bl] at 192 64
\pinlabel {$m{+}1$} [tl] at 148 46
\pinlabel {$m{+}n{+}1$} [l] at 226 20
\pinlabel {$p$} [t] at 114 14
\pinlabel {$\scriptstyle\infty$} at 230 53

\pinlabel {$1$} [r] at 342 44
\pinlabel {$2$} [r] at 349 79
\pinlabel {$m{+}2$} [r] at 328 59
\pinlabel {$m{+}3$} [br] at 351 113
\pinlabel {$m{-}1$} [br] at 391 116
\pinlabel {$2m$} [b] at 403 140
\pinlabel {$m$} [b] at 434 127
\pinlabel {$2m{+}1$} [l] at 434 88
\pinlabel {$2m{+}2$} [t] at 475 35
\pinlabel {$m{+}n$} [tl] at 520 35
\pinlabel {$m{+}1$} [bl] at 455 52
\pinlabel {$m{+}n{+}1$} [bl] at 548 47
\pinlabel {$p$} [t] at 422 11

\endlabellist
\begin{center}
\includegraphics[width=5in]{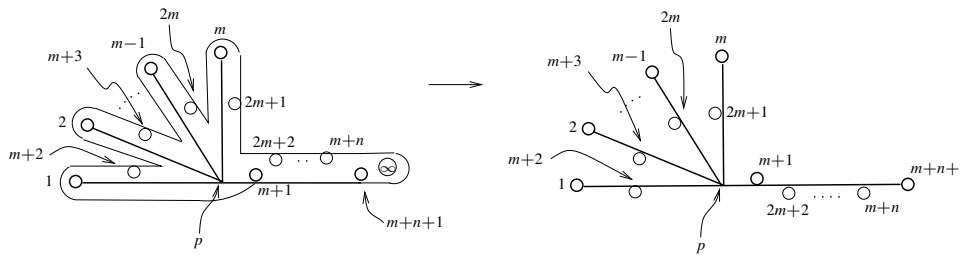}
\caption{Identifying edges of $G_{m,n}'$ with edges of $H_{m,n}$}
\label{identify-fig}
\end{center}
\end{figure}

\subsection{Characteristic equations for dilatation}
\label{dilatations-section}

Consider the graph map $\r_m \colon\thinspace  \Gamma_m \rightarrow \Gamma_m$, 
shown in \fullref{RmnGraph-fig}. 
As seen in Figures~\ref{BmnGraph-fig} and \ref{SmnGraph-fig}, the graph maps for 
 $\phi_{\beta_{m,n}}$ and $\phi_{\sigma_{m,n}}$ ``contain" $\r_m$ as the 
action on a subgraph. 

\begin{figure}[htbp]
\labellist\small
\pinlabel {$1$} [r] at 13 15
\pinlabel {$m{-}1$} [br] at 30 60
\pinlabel {$m$} [b] at 58 66
\pinlabel {$p$} [t] at 58 15
\pinlabel {$m{+}1$} [l] at 104 15
\pinlabel {$2$} [r] at 214 16
\pinlabel {$m$} [br] at 232 61
\pinlabel {$p$} [t] at 259 18
\pinlabel {$m{+}1$} [bl] at 304 24
\pinlabel {$1$} [tl] at 304 16
\endlabellist
\begin{center}
\includegraphics[height=0.9in]{\figdir/RmnGraph}
\caption{Graph map $\r_m \colon\thinspace \Gamma_m \rightarrow \Gamma_m$}
\label{RmnGraph-fig}
\end{center}
\end{figure}

The transition matrix for $\r_m$ has the following form with respect to the basis of 
edges $e(p,1), \dots, e(p,m+1)$: 
$$
\sR_m = \left[
\begin{array}{cccccc}
0 & 1  &  0 & \dots&0 & 0\\
0 & 0 & 1 & \dots &0& 0\\
\vdots\\
0 &0 &0& \dots & 1 & 0\\
0 &0 & 0 & \dots & 0& 2\\
1 &0 &0& \dots & 0 & 1\\
\end{array}
\right ].
$$
The characteristic polynomial for $\sR_m$ is $R_m(t) = t^m(t-1) - 2$. 
As we will see in the proof of \fullref{CharEq-thm}, 
the appearance of $\sR_m$ within the transition matrices of $\phi_{\beta_{m,n}}$ and 
 $\phi_{\sigma_{m,n}}$ gives rise to a similar form for their characteristic equations. 
 
Given a polynomial $f(t)$ of degree $d$, the {\it reciprocal} of $f(t)$ is $f_*(t) = t^df(1/t)$. 

\begin{thm}
\label{CharEq-thm}  
\begin{enumerate}
\item[(1)] For $m,n \ge 1$, 
$\lambda(\beta_{m,n})$ is the largest root of 
$$T_{m,n}(t) = t^{n+1} R_m(t) + (R_m)_*(t).$$
\item[(2)] For $m,n\ge 1$ with $|m-n| \ge 2$, 
$\lambda(\sigma_{m,n})$ is the largest root of 
$$S_{m,n}(t) = t^{n+1} R_m(t) - (R_m)_*(t).$$
\end{enumerate}
\end{thm}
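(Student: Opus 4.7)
The plan is to apply the Bestvina--Handel criterion (\fullref{BH-thm}) to the graph maps $\g_{m,n}$ and $\h_{m,n}$ constructed in \fullref{Graphmap-section}. By \fullref{BmnGraph-prop} and \fullref{SmnGraph-prop}, these maps satisfy (BH1) and (BH2), so $\lambda(\beta_{m,n})$ and $\lambda(\sigma_{m,n})=\lambda(\sigma'_{m,n})$ equal the spectral radius $\lambda(\sT)$ of the transition matrix restricted to real edges. It therefore suffices to compute the characteristic polynomial of $\sT$ in each case and identify it with $T_{m,n}(t)$ (resp. $S_{m,n}(t)$); irreducibility from (BH2), together with Perron--Frobenius, then identifies $\lambda(\sT)$ as the largest real root.

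For (1), I would list the real edges of $G_{m,n}$ as the $m+1$ loop-side edges of the left star $\Gamma_m$ based at $p$, followed by the $n+1$ loop-side edges of the right star based at $q$, ordered to match \fullref{BmnGraph-fig}. Reading the images off that figure, the transition matrix $\sT_{m,n}$ acquires a block form whose diagonal blocks are the matrix $\sR_m$ of \fullref{RmnGraph-fig} and its right-hand analog, coupled by a small off-diagonal block encoding the passage through the common vertex $m+1$. Expanding $\det(tI - \sT_{m,n})$ along the ``seam'' rows, the left block contributes $R_m(t)=t^m(t-1)-2$ while the right block, whose star is indexed in the opposite cyclic order, contributes the reciprocal $(R_m)_*(t)$ shifted by $t^{n+1}$, yielding $T_{m,n}(t)=t^{n+1}R_m(t)+(R_m)_*(t)$.

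The main obstacle is carrying out this determinant expansion cleanly and verifying that the right-hand star genuinely produces $(R_m)_*(t)$ with the correct shift. A cleaner alternative that I would try first is to solve $\sT_{m,n}\mathbf{v}=t\mathbf{v}$ directly: the left star imposes the linear recursion whose consistency condition is $R_m(t)=0$ (up to a scalar), while the wrap-around through the right star adds an additive seam correction that, after normalization, becomes $(R_m)_*(t)/t^{n+1}$. Setting this consistency equation equal to zero gives $T_{m,n}(t)=0$, which is in fact easier to justify from the eigenvector equations than by brute expansion.

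For (2), the same plan applies to $\h_{m,n}$ on $H_{m,n}$ using \fullref{SmnGraph-prop}. Comparing the two graph maps via \fullref{BmnGraphNew-fig}, \fullref{wrap-fig} and \fullref{identify-fig}, one sees that $\h_{m,n}$ is obtained from $\g_{m,n}$ by the counter-clockwise wrapping that passes the rightmost strand once around the others; this wrapping alters only the seam interaction and reverses a sign in the corresponding off-diagonal block of $\sT$. The same determinant expansion or eigenvector calculation therefore produces $S_{m,n}(t)=t^{n+1}R_m(t)-(R_m)_*(t)$. In both cases, since $\sT$ is a nonnegative irreducible integer matrix with spectral radius $>1$, Perron--Frobenius guarantees that $\lambda(\sT)$ is a simple real root of its characteristic polynomial and strictly dominates every other root in modulus, completing the proof.
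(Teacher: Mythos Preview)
Your overall strategy is correct, but the execution has real gaps that the paper's proof fills in ways you have not anticipated.

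For part~(1), your heuristic that ``the left block contributes $R_m(t)$ while the right block contributes $(R_m)_*(t)$ shifted by $t^{n+1}$'' is not a valid cofactor expansion: if both diagonal blocks were star matrices $\sR_m$ and an $\sR_n$--analog, the leading contribution to the characteristic polynomial would be the \emph{product} $R_m(t)\cdot(\text{char poly of right block})$, not a sum of the form $t^{n+1}R_m(t)+(R_m)_*(t)$. The paper avoids this by working not with $G_{m,n}$ but with the punctured graph $G_{m,n}'$ (where $q$ becomes a marked point), and by choosing a basis in which one element is the \emph{sum} $v_{m+1}=e(p,m+1)+e(m+1,m+n+1)$ of two edges. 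In this basis the upper-left $(m{+}1)\times(m{+}1)$ block is exactly $\sR_m$, but the lower-right $(n{+}1)\times(n{+}1)$ block is \emph{nilpotent}, with characteristic polynomial $t^{n+1}$. The block product then gives the main term $t^{n+1}R_m(t)$ directly, and two explicitly marked off-diagonal entries supply the correction $(R_m)_*(t)=-2t^{m+1}-t+1$. Your two-star picture does not produce this structure, and your acknowledged ``main obstacle'' is precisely the place where the paper's basis choice does the work.

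For part~(2), there is a further issue you do not address. The real-edge transition matrix $\sS_{m,n}$ for $\h_{m,n}$ acts on a space of dimension $m+n+1$, one less than $\deg S_{m,n}=m+n+2$, so $S_{m,n}(t)$ cannot literally be its characteristic polynomial. The paper handles this by pulling $\sS_{m,n}$ back along the edge-identification $\f_{m,n}\colon G_{m,n}'\to H_{m,n}$ to a map $\sS_{m,n}'$ on the larger space $V^{\mathrm{tot}}(G_{m,n}')$; this pullback differs from $\sT_{m,n}'$ by a single sign (your intuition here is correct), so its characteristic polynomial is $S_{m,n}(t)$. One must then check that the kernel of $\f_{m,n}$ is a $1$--eigenspace for $\sS_{m,n}'$, so that the genuine characteristic polynomial of $\sS_{m,n}$ is $S_{m,n}(t)/(t-1)$; since $\lambda(\sigma_{m,n})>1$, the largest root is unaffected. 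Without this step your argument would not close.
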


\begin{proof} 
We note that the spectral radius of the transition matrix for $\g_{m,n}$ 
is equal to  that for $\g'_{m,n}$ by the construction of $\g'_{m,n}$ (\fullref{TT-section}). 
Thus, to find characteristic polynomials for $\lambda(\beta_{m,n})$ and $\lambda(\sigma_{m,n})$, 
it is enough to compute the transition matrices for $\g_{m,n}'$ and $\h_{m,n}$ respectively. 

Consider the  basis for $V^{\mathrm{tot}}(G_{m,n}')$:
 \begin{eqnarray*}
v_k &=& e(p,k),\quad k=1,\dots,m, 
\\
v_{m+1} &=&  e(p,m+1)+e(m+1,m+n+1),  
\\
v_{m+1+k} &=& e(m+k,m+k+1), \quad k=1,\dots,n, \ \mbox{and} 
\\
v_{m+n+2} &=& e(p,m+1).
\end{eqnarray*}
The corresponding transition matrix $\sT'_{m,n}$ for $\g_{m,n}'$ is given by: 
$$
\sT'_{m,n} = 
\left [
\begin{array}{cccccc|cccccc}
0 & 1 & 0 & \dots & 0 & 0 &0&0&\dots&&0\\
0 & 0 & 1 & \dots & 0 & 0&0&0& \dots &&0\\
&&&\dots&&&&&\dots&&\\
0 &0 &0&\dots&1&0&0&0&\dots&&0\\
0 & 0 & 0 & \dots & 0 & 2 &1&0& \dots&&(1)_b\\
1 & 0 & 0 & \dots & 0 & 1 &2 & 0 &\dots &&0\\
\hline
0 &0& \dots &&0&0 &0&1&0&\dots &0\\
0 &0& \dots &&0&0&0&0&1&\dots &0\\
&&\dots&&&&&&\dots&\\
0 &0& \dots &&0&0&0&0&\dots &1&0\\
0 &0& \dots &&0&(1)_{ab}&0&0&\dots &0&0\\
0&0&\dots &&0&0&(-1)_b&0&\dots &0&(0)_a\\
\end{array}
\right ]
$$
We will show that the characteristic polynomial for  $\sT'_{m,n}$ is given by 
$$
T_{m,n}(t) = t^{n+1} R_m(t) + (R_m)_*(t).
$$
The upper left  block matrix of $\sT'_{m,n}$ corresponding to the vectors $v_1,\dots,v_{m+1}$ 
is identical to $\mathcal{R}_m$.  Multiplying the characteristic polynomials of the upper 
left and lower right diagonal blocks gives $t^{n+1}R_m$.  
The rest of the characteristic polynomial has two nonzero summands.  
One corresponds to the matrix entries marked $a$, and is given by 
$$
t(-1)^{^{n+1}}
\left |
\begin{array} {ccccc}
-1&0&\cdots&0&0\\
t&-1&\cdots&0&0\\
&&\cdots&&\\
0&0&\cdots&t&-1\\
\end{array}
\right |_{_{(n-1)\times (n-1)}}
\left |
\begin{array}{cccccc}
t&-1&\cdots&0&0\\
0&t&\cdots&0&0\\
&&\cdots&&\\
0&0&\cdots&t&-1\\
-1&0&\cdots&0&-2
\end{array}
\right |_{_{(m+1)\times (m+1)}} 
$$
which yields $-t(2t^m +1)$.  
The other summand corresponds to the matrix entries marked $b$ and is given by 
$$
(-1)^{^{n+1}}
\left |
\begin{array} {ccccc}
-1&0&\cdots&0&0\\
t&-1&\cdots&0&0\\
&&\cdots&&\\
0&0&\cdots&t&-1\\
\end{array}
\right |_{_{(n-2)\times (n-2)}}
\left |
\begin{array}{cccccc}
t&-1&0&\cdots&0&0\\
0&t&-1&\cdots&0&0\\
&&&\cdots&&\\
0&0&0&\cdots&t&-1\\
-1&0&0&\cdots&0&0\\
\end{array}
\right |_{_{m \times m}}
$$
which yields $1$.
This completes the proof of (1). 

Let $\sS_{m,n}$ be the transition matrix for $\h_{m,n} \colon\thinspace H_{m,n} \rightarrow H_{m,n}$.  
We will pull back $\sS_{m,n}$ to an invertible linear transformation 
on $V^{\mathrm{tot}}(G_{m,n}')$ using $\f_{m,n}$ given in \eqref{graphmap-map}.   
Let $\h_{m,n}' (v_i)$ be the image of $\g_{m,n}'(v_i)$ under $\f_{m,n}$.  
Then the transition matrix $\sS_{m,n}'$ for  $\h_{m,n}' \colon\thinspace G_{m,n}' \rightarrow H_{m,n}$  
is the same as $\sT_{m,n}'$ except at the vector $v_{m+n+1}$.  
As can be seen in \fullref{wrap-fig}, we have
$$
\sS_{m,n}'(v_{m+1}) = \sT_{m,n}'(v_{m+1}) - 2v_{m+n+1}
$$ 
Thus, $\sS_{m,n}'$ differs from $\sT_{m,n}'$ only by changing the entry labeled by both 
$a$ and $b$ from $1$ to $-1$. 

Recall that  the sign of the entry marked both $a$ and $b$ in $\sT_{m,n}'$ determines the sign of 
in front of $(R_{m})_*$.  Since this sign is the only difference between $\sS_{m,n}'$ 
and $\sT_{m,n}'$, the characteristic polynomial for $\sS'_{m,n}$ is given by 
$$
S_{m,n}(t) = t^{n+1}R_m(t) - (R_m)_*(t).
$$
To finish the proof of (2),  we have left to check that $\lambda(\sigma_{m,n})$ is 
the largest root of $S_{m,n}$. 
Thus (2) follows if we can show that the extra eigenvalue of $\sS_{m,n}'$ has 
absolute value $1$.  
From \fullref{identify-fig}, we see that the kernel of the linear map induced by 
$\f_{m,n}$ is spanned by 
$$
w = 2(v_1 + \cdots + v_m) + v_{m+1} - (v_{m+2} + \cdots + v_{m+n+1})  + v_{m+n+2}.
$$
Under $\h_{m,n}'$, we have 
\begin{eqnarray*}
2(v_1+ \cdots v_m) &\mapsto& 2(v_{m+1} + v_1 + \cdots + v_{m-1}),\\
v_{m+1} &\mapsto& 2v_m + v_{m+1} - v_{m+n+1},\\
v_{m+2} + \cdots +v_{m+n+1} &\mapsto& v_{m} + 2 v_{m+1}  + 
v_{m+2} + \cdots + v_{m+n} - v_{m+n+2},\\
v_{m+n+2} &\mapsto& v_{m},
\end{eqnarray*}
and hence, $\h_{m,n}'(w) =  w$.   
Thus, the characteristic polynomial for $\sS_{m,n}'$ 
differs from that for $\sS_{m,n}$ by a factor of $(t-1)$. 
\end{proof}

\begin{rem} 
Minakawa independently discovered the 
pseudo-Anosov maps on $F_g$ constructed in the proof of \fullref{Smn-lift-prop} for the 
case when  $(m,n) = (g-1,g+1)$ using a beautiful new method for constructing 
orientable pseudo-Anosov maps on $F_g$ (see Minakawa \cite{Min}).  
He also directly computes their dilatation using different techniques from ours. 
\end{rem}

\subsection{Dilatations and Salem--Boyd sequences}
\label{Salem-Boyd-section} 

Recall that given a polynomial $f(t)$ of degree $d$, the reciprocal of $f(t)$ is $f_*(t) = t^df(1/t)$.   
The polynomial $f$ satisfying $f=f_*$ (respectively,  $f=-f_*$) 
 is a {\it reciprocal polynomial} (respectively, {\it anti-reciprocal polynomial}). 
For a monic integer polynomial $P(t)$ of degree $d$, 
the sequence 
$$
Q^\pm_n(t) = t^n P(t) \pm P_*(t)
$$
is called the  {\it Salem--Boyd sequence} associated to $P$.   

\begin{thm}
\label{Salem-Boyd-theorem}
Let $Q_n$ be a Salem--Boyd sequence associated to $P$. 
Then $Q_n$ is a reciprocal or an anti-reciprocal polynomial, and the set of roots of 
$Q_n$ outside the unit circle converge to those of $P$ as $n$ goes to infinity.  
\end{thm}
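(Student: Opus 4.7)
The plan is to verify the reciprocal/anti-reciprocal claim by a direct computation of $(Q_n^\pm)_*$, and then to handle the convergence of roots by dividing through by $t^n$ and applying a Rouché/Hurwitz argument in $\{|t|>1\}$.

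For the first part, I would compute directly. Since $P$ has degree $d$, $P_*(t)=t^d P(1/t)$ has degree at most $d$, and $Q_n^\pm$ has degree $n+d$. Substituting,
\begin{align*}
(Q_n^\pm)_*(t) &= t^{n+d}\,Q_n^\pm(1/t) \\
&= t^{n+d}\bigl(t^{-n}P(1/t) \pm P_*(1/t)\bigr) \\
&= t^d P(1/t) \pm t^{n+d}\cdot t^{-d}P(t) \\
&= P_*(t) \pm t^n P(t) = \pm Q_n^\pm(t).
\end{align*}
So $Q_n^+$ is reciprocal and $Q_n^-$ is anti-reciprocal.

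For the second part, divide by $t^n$ and set $f_n(t) = P(t) \pm t^{-n}P_*(t)$, so that $f_n$ and $Q_n^\pm$ have the same zeros in $\{t\neq 0\}$. The key observation is that on any compact subset $K$ of $\{|t|>1\}$ we have $|t^{-n}P_*(t)|\to 0$ uniformly, hence $f_n\to P$ uniformly on $K$. By Hurwitz's theorem, for every root $z_0$ of $P$ in $\{|t|>1\}$ of multiplicity $k$ and every small disk $D\subset\{|t|>1\}$ around $z_0$ containing no other root of $P$, $f_n$ has exactly $k$ zeros in $D$ for all sufficiently large $n$, and these cluster at $z_0$.

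It remains to show that no other zeros of $f_n$ in $\{|t|>1\}$ escape this description. For this I would choose $\epsilon>0$ small enough that $P$ has no roots on $|t|=1+\epsilon$, and $R$ so large that all roots of $P$ lie in $|t|<R$. On $|t|=1+\epsilon$ one has $|P(t)|\ge\delta>0$, while $|t^{-n}P_*(t)|\le(1+\epsilon)^{-n}\max_{|t|=1+\epsilon}|P_*(t)|\to 0$. On $|t|=R$, since $P$ is monic of degree $d$, $|P(t)|\gtrsim R^d$ while $|t^{-n}P_*(t)|\le R^{-n}\max_{|t|=R}|P_*(t)|\to 0$. Hence for $n$ large, $|f_n-P|<|P|$ on the boundary of the annulus $\{1+\epsilon<|t|<R\}$, and Rouché's theorem gives that $f_n$ and $P$ have the same number of zeros (with multiplicity) in this annulus. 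Combined with the Hurwitz clustering near each root of $P$, this forces every zero of $f_n$ in the annulus to lie in an arbitrarily small neighborhood of the zero set of $P$ in $\{|t|>1\}$, which is the desired convergence.

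The only mildly delicate point is controlling zeros that might a priori escape to infinity; this is handled by the $|t|=R$ estimate above, which is routine but essential, since otherwise Hurwitz alone would not preclude stray zeros drifting off. No new macros or notation beyond what is already defined are needed.
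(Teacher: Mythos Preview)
Your proposal is correct and follows essentially the same approach as the paper: both divide $Q_n^\pm$ by a power of $t$ and apply Rouch\'e's theorem outside the unit disk. The paper's one-line sketch normalizes by $t^{n+d}$ rather than $t^n$, giving $\frac{P(t)}{t^d}\pm\frac{P_*(t)}{t^{n+d}}$, which is holomorphic on the Riemann sphere minus the closed unit disk; this lets Rouch\'e be applied once on that whole region and makes your separate $|t|=R$ estimate unnecessary, but the underlying idea is identical.
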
 
\fullref{Salem-Boyd-theorem} is a consequence of Rouch\'e's Theorem applied to the sum
$ \frac{P(t)}{t^d} \pm \frac{P_*(t)}{t^{n+d}}$ 
considered as a holomorphic function on the Riemann sphere minus the unit disk.

For a  monic integer polynomial $f(t)$, let $N(f)$ be the number of roots of $f$ outside 
the unit circle, $\lambda(f)$ the maximum norm of roots of $f$, 
and $M(f)$ the product of the norms of roots outside the unit circle, 
which is called the {\it  Mahler measure} of $f$. 
By \fullref{Salem-Boyd-theorem}, we have the following. 

\begin{cor}
\label{Salem-Boyd-cor}  
Let $Q_n$ be a Salem--Boyd sequence associated to $P$.  
Then 
$$ \lim_{n\rightarrow \infty} M(Q_n) = M(P)\ \mbox{and}\ 
 \lim_{n\rightarrow \infty} \lambda(Q_n) = \lambda(P).$$
\end{cor}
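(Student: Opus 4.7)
The plan is to derive both limits directly from \fullref{Salem-Boyd-theorem}. Let $\alpha_1,\dots,\alpha_k$ denote the roots of $P$ of modulus greater than $1$, counted with multiplicity, and let $\beta_1^{(n)},\dots,\beta_{k_n}^{(n)}$ be the analogous roots of $Q_n$. The content of \fullref{Salem-Boyd-theorem}, which is proved by applying Rouch\'e to $P(t)/t^{\deg P} \pm P_*(t)/t^{\deg Q_n}$ on the complement of the closed unit disk, is precisely that for every $\varepsilon>0$ there exists $N$ such that for all $n\ge N$ we have $k_n=k$ and, after suitable relabeling, $|\beta_i^{(n)}-\alpha_i|<\varepsilon$ for $i=1,\dots,k$.

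With this in hand, both assertions become continuity statements about a product and a maximum of a fixed number of quantities. Indeed, $M(Q_n)=\prod_{i=1}^{k}|\beta_i^{(n)}|$ is a $k$-fold product of continuously varying quantities, so
$$
\lim_{n\to\infty} M(Q_n) \;=\; \prod_{i=1}^{k}|\alpha_i| \;=\; M(P),
$$
and similarly
$$
\lim_{n\to\infty}\lambda(Q_n) \;=\; \max_{1\le i\le k}|\beta_i^{(n)}| \;\longrightarrow\; \max_{1\le i\le k}|\alpha_i| \;=\; \lambda(P),
$$
provided $k\ge 1$. The degenerate case $k=0$ must be handled separately: here $M(P)=1$ and all roots of $P$ have modulus $\le 1$, and \fullref{Salem-Boyd-theorem} implies that $Q_n$ likewise has no roots outside the unit circle for $n$ large, so $M(Q_n)=1=M(P)$ eventually. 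Since $P$ and $Q_n$ are monic integer polynomials with nonzero constant term (the constant term is the product, up to sign, of their roots, which is a nonzero integer), each must have at least one root of modulus exactly $1$, giving $\lambda(P)=1=\lambda(Q_n)$ for large $n$.

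The only obstacle, and it is a minor one, is the bookkeeping of multiplicities together with the edge case $k=0$; the essential analytic input, namely the Rouch\'e-based convergence of the large roots, has already been absorbed into \fullref{Salem-Boyd-theorem}, so the corollary itself requires no further complex analysis.
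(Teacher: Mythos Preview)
Your proof is correct and matches the paper, which offers no argument beyond ``By \fullref{Salem-Boyd-theorem}, we have the following''; you have simply spelled out the obvious continuity step. One minor quibble: in the degenerate case $k=0$ your parenthetical justification that $P$ has nonzero constant term is circular (nothing in the setup forces $P(0)\ne 0$), though for $Q_n$ one gets $Q_n(0)=\pm P_*(0)=\pm 1$ directly from the definition, and in any event the paper only invokes the corollary for $P=R_m$ with $\lambda(R_m)>1$, so this edge case never arises.
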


Any algebraic integer on the unit circle has a (anti-)reciprocal minimal polynomial.  
Suppose that $P(t) = P_0(t) R(t)$, 
where $R$ is a (anti-)reciprocal and $P_0$ has no roots on the unit circle. 
Then the Salem--Boyd sequence associated to $P$ satisfies 
$$
Q_n(t) = R(t) (t^nP_0(t) \pm (P_0)_*(t)).
$$
We have thus shown the following. 

\begin{lem}
All roots of $P$ on the unit circle are also roots of $Q_n$ for all $n$. 
\end{lem}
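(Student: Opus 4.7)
The plan is to verify directly the factorization asserted just before the lemma statement, namely
\[
Q_n(t) \;=\; R(t)\bigl(t^{n}P_0(t) \pm (P_0)_*(t)\bigr),
\]
from which the lemma is immediate: every root of $R$ is then a root of $Q_n$, and by hypothesis every root of $P$ on the unit circle must be a root of $R$ (since $P_0$ has no roots there). So the entire content of the lemma is this factorization, and the task reduces to a careful bookkeeping of degrees and signs.

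First I would record two elementary facts about the reciprocal operation. The reciprocal is multiplicative on monic integer polynomials: if $f$ has degree $d_f$ and $g$ has degree $d_g$, then
\[
(fg)_*(t) \;=\; t^{d_f+d_g}f(1/t)g(1/t) \;=\; f_*(t)\,g_*(t).
\]
Applying this to $P = P_0 R$ gives $P_*(t) = (P_0)_*(t)\,R_*(t)$. Second, the hypothesis that $R$ is reciprocal or anti-reciprocal means $R_*(t) = \varepsilon R(t)$ for some $\varepsilon \in \{+1,-1\}$.

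Substituting into the definition of the Salem--Boyd sequence,
\[
Q_n(t) \;=\; t^{n}P_0(t)R(t) \;\pm\; (P_0)_*(t)\,R_*(t) \;=\; R(t)\bigl(t^{n}P_0(t) \;\pm\; \varepsilon (P_0)_*(t)\bigr),
\]
which is exactly the desired factorization (the internal sign is $\pm\varepsilon$, still of the form $\pm$, so the right-hand factor is a polynomial of the same shape as the Salem--Boyd sequence for $P_0$). Thus $R(t)$ divides $Q_n(t)$ for every $n$, and every root of $R$, in particular every root of $P$ lying on the unit circle, is a root of $Q_n$.

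I do not anticipate a serious obstacle here; the only thing one has to be mildly careful about is the sign case analysis and the fact that the degrees match so that the reciprocal of a product is the product of reciprocals without any extra $t$-factors. These are purely formal, so the proof will be a short direct computation rather than anything requiring additional machinery.
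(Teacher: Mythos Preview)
Your proposal is correct and follows exactly the paper's approach: the paper states the factorization $Q_n(t) = R(t)\bigl(t^n P_0(t) \pm (P_0)_*(t)\bigr)$ immediately before the lemma and deduces the lemma from it with the words ``We have thus shown the following.'' Your write-up simply supplies the routine verification (multiplicativity of the reciprocal and $R_* = \varepsilon R$) that the paper leaves implicit.
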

The following theorem can be proved by first restricting to the case when $P$ 
has no roots on the unit circle, and then by defining a natural deformation of 
the roots of $P(t)$ to those of $Q_n(t)$, which don't cross the unit
circle (see Boyd \cite{Boyd77}).  

\begin{thm}
\label{number-of-roots-theorem}
Let $Q_n$ be a Salem--Boyd sequence associated to $P$.  
Then $N(Q_n) \leq N(P)$ for all $n$. 
\end{thm}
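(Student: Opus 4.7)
The plan is to follow the hint: reduce to the case where $P$ has no roots on $|t|=1$, and then deform $t^nP(t)$ continuously into $Q_n(t)$ along a one-parameter family of polynomials whose roots never touch the unit circle. Counting zeros outside the unit circle along this path will then give the inequality.

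For the reduction, I would factor $P(t) = P_0(t)R(t)$, where $R$ is the product of the factors of $P$ whose roots lie on $|t|=1$ and $P_0$ has no such roots. Since algebraic integers on the unit circle have (anti-)reciprocal minimal polynomials, $R$ is itself (anti-)reciprocal, so $R_{\ast} = \pm R$. Consequently
\[
Q_n^{\pm}(t) \;=\; R(t)\bigl(t^n P_0(t) \pm (P_0)_{\ast}(t)\bigr)
\]
up to an overall sign on the second factor. Because $N(R)=0$ and $N(P) = N(P_0)$, while $N(Q_n^{\pm})$ equals the count for the second factor, the problem reduces to the case in which $P$ itself has no roots on the unit circle.

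Assuming this, write $Q_n = t^n P + \varepsilon P_{\ast}$ with $\varepsilon \in \{+1,-1\}$ and introduce the family
\[
P_s(t) \;=\; t^n P(t) + s\varepsilon\,P_{\ast}(t),\qquad s \in [0,1],
\]
so that $P_0 = t^n P$ and $P_1 = Q_n$. For $n \ge 1$, the degree $n + \deg P$ and the leading coefficient of $P_s$ do not depend on $s$, so the roots of $P_s$ vary continuously in $s$. On $|t|=1$ one has $|P_{\ast}(t)| = |P(t)|$, because $P_{\ast}(t) = t^{\deg P}\,\overline{P(t)}$ there and $P$ is real; hence for every $s \in [0,1)$,
\[
|P_s(t)| \;\ge\; |t^n P(t)| - s|P_{\ast}(t)| \;=\; (1-s)\,|P(t)| \;>\; 0 \qquad \mbox{on}\ |t|=1.
\]
Thus no root of $P_s$ lies on the unit circle when $s < 1$, and at $s = 0$ the nonzero roots of $P_0 = t^n P$ are precisely the roots of $P$, so the count of roots of $P_s$ in $|t|>1$ equals $N(P)$ for every $s \in [0,1)$.

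The remaining step, which I expect to be the main obstacle, is passing from $s<1$ to $s=1$ itself, since $P_1 = Q_n$ is allowed to have roots on $|t|=1$. I would argue by continuous root selection: for each root $\alpha$ of $Q_n$ with $|\alpha|>1$, choose a continuous branch $s \mapsto \alpha_s$ of roots of $P_s$ with $\alpha_1 = \alpha$. Then $|\alpha_s| > 1$ for $s$ near $1$ by continuity, and since $|\alpha_s| \ne 1$ for every $s \in [0,1)$, connectedness of $[0,1]$ forces $|\alpha_s| > 1$ throughout $[0,1]$. Hence $\alpha_0$ is a root of $P$ outside $|t|=1$, and, tracking multiplicities, the resulting assignment is injective, giving $N(Q_n) \le N(P)$. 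The delicacy is in selecting the branches $\alpha_s$ coherently through possible coalescences of roots at $s = 1$; this is supplied by the classical continuity theorem for the roots of a polynomial family, viewed as an unordered multiset varying continuously in the parameter.
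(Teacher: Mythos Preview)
Your proposal is correct and follows exactly the approach the paper sketches: the paper does not give a full proof but merely indicates that one reduces to the case where $P$ has no roots on the unit circle and then uses a deformation of $t^nP$ to $Q_n$ whose roots avoid $|t|=1$, citing Boyd \cite{Boyd77} for details. You have faithfully supplied those details, including the key observation that $|P_*(t)|=|P(t)|$ on $|t|=1$ and the semicontinuity argument at $s=1$.
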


We now apply the above results to the Salem--Boyd sequences $S_{m,n}$ and 
$T_{m,n}$ associated to $R_m$ of \fullref{CharEq-thm}.  
To do this,  we first study $R_m$.

\begin{lem}
\label{MRm-lem}  
For all $m \ge 1$, $M(R_m) = 2$.
\end{lem}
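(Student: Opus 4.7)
The plan is to pin down $M(R_m)$ by showing that every root of $R_m$ has modulus at least $1$, and then reading off the Mahler measure from the constant term. Write $R_m(t) = t^{m+1} - t^m - 2$; this is monic of degree $m+1$ with constant term $-2$, so the product of all its roots (with multiplicity) has absolute value $2$. Since for a monic polynomial $f$ one has
$$
M(f) \cdot \prod_{|\alpha|<1} |\alpha| \;=\; \prod_{\text{all roots }\alpha} |\alpha|,
$$
the identity $M(R_m)=2$ is equivalent to the assertion that $R_m$ has no root strictly inside the open unit disk.

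To rule out such roots, I would suppose $\alpha$ is a root with $|\alpha|<1$ and derive a contradiction from the factored form of the defining equation. The relation $R_m(\alpha)=0$ is the same as $\alpha^m(\alpha-1)=2$, so taking moduli and using the triangle inequality,
$$
2 \;=\; |\alpha|^m\,|\alpha-1| \;\le\; |\alpha|^m\bigl(|\alpha|+1\bigr) \;<\; 1 \cdot 2 \;=\; 2,
$$
since $|\alpha|<1$ forces both $|\alpha|^m<1$ and $|\alpha|+1<2$. This contradiction shows every root has $|\alpha|\ge 1$, so $\prod_{|\alpha|<1}|\alpha|$ is an empty product equal to $1$, and hence $M(R_m)=2$.

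There is no real obstacle here: the argument reduces to a one-line triangle-inequality estimate, and the only genuine content is the observation that the shape $R_m(t)=t^m(t-1)-2$ is rigid enough (the nonconstant part vanishes at $0$ and at $1$, and has modulus less than $2$ throughout $|t|<1$) that the constant term alone controls the Mahler measure. In the Salem--Boyd framework of the preceding discussion, this will later let one read off $\lim_{n\to\infty} M(T_{m,n}) = \lim_{n\to\infty} M(S_{m,n}) = 2$ by \fullref{Salem-Boyd-cor}.
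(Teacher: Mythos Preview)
Your proof is correct and follows essentially the same approach as the paper: show that $R_m$ has no roots strictly inside the unit circle (via the estimate $|t^m(t-1)|<2$ for $|t|<1$), and then read off the Mahler measure from the constant term. The paper compresses this into two sentences, but the content is identical to yours.
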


\begin{proof}  
For $|t| < 1$, we have $|t^m(t-1)| < 2$, 
and hence $R_m$ has no roots strictly within the unit circle. 
Therefore, the Mahler measure of $R_m$ must equal the absolute value of the constant 
coefficient, namely $2$. 
\end{proof}

Applying \fullref{Salem-Boyd-cor}, we have the following. 

\begin{cor}
\label{asymp-cor} 
Fixing $m \ge 1$ and letting $n$ increase, 
the Mahler measures of $T_{m,n}$ and $S_{m,n}$ converge to $2$.
\end{cor}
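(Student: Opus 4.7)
The plan is to recognize the polynomials $T_{m,n}(t)$ and $S_{m,n}(t)$ as (shifted) Salem--Boyd sequences associated to $R_m(t)$, and then invoke the machinery already developed in this subsection.

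First I would observe, directly from \fullref{CharEq-thm}, that with $m$ fixed the polynomials
\[
T_{m,n}(t) = t^{n+1}R_m(t) + (R_m)_*(t) \quad\text{and}\quad S_{m,n}(t) = t^{n+1}R_m(t) - (R_m)_*(t)
\]
are precisely the $(n+1)$st terms of the two Salem--Boyd sequences $Q^+_k$ and $Q^-_k$ associated to $P = R_m$, with $k = n+1$. Thus letting $n \to \infty$ is the same as letting the Salem--Boyd parameter tend to infinity.

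Next I would apply \fullref{Salem-Boyd-cor} to the sequences $\{T_{m,n}\}_{n \ge 1}$ and $\{S_{m,n}\}_{n \ge 1}$ with $P = R_m$, which yields
\[
\lim_{n \to \infty} M(T_{m,n}) = M(R_m) = \lim_{n \to \infty} M(S_{m,n}).
\]
Finally, \fullref{MRm-lem} identifies $M(R_m) = 2$ for every $m \ge 1$, giving the desired conclusion.

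There is essentially no obstacle here: the corollary is a direct assembly of \fullref{Salem-Boyd-cor} and \fullref{MRm-lem}, with the only (minor) bookkeeping being the verification that the index shift from $n$ to $n+1$ does not affect the limit. I would simply note that $\{T_{m,n}\}$ and $\{S_{m,n}\}$ are subsequences (in fact tails) of the Salem--Boyd sequences, so their limiting Mahler measures agree with those of the full sequences.
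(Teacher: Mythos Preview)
Your proposal is correct and follows exactly the same approach as the paper: the corollary is stated immediately after \fullref{MRm-lem} with the single remark that it follows by applying \fullref{Salem-Boyd-cor}. Your only addition is the explicit observation about the index shift $n \mapsto n+1$, which the paper leaves implicit.
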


\begin{lem}
\label{Rm-lem}  
The polynomial $R_m$ has one real root outside the unit circle.  
This root  is simple and greater than $1$.
\end{lem}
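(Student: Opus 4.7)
The plan is to analyze the real polynomial $R_m(t) = t^{m+1}-t^m-2$ directly, using elementary calculus and the intermediate value theorem; no Salem--Boyd machinery is needed. First I would compute
\begin{equation*}
R_m'(t) = t^{m-1}\bigl((m+1)t - m\bigr),
\end{equation*}
whose only real zeros are $t=0$ and $t = m/(m+1) \in (0,1)$. This pins down the intervals of monotonicity of $R_m$ on $\mathbb{R}$.

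On $[1,\infty)$ one has $R_m' > 0$, while $R_m(1) = -2 < 0$ and $R_m(t) \to +\infty$ as $t \to \infty$. The intermediate value theorem thus yields a unique real root $\lambda > 1$, and this root is automatically simple because $R_m'(\lambda) > 0$. This single step handles existence, simplicity, and the location claim.

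It remains to show that $\lambda$ is the \emph{only} real root with $|t|>1$, i.e.\ to rule out real roots in $(-\infty,-1)$. A parity case split is convenient, since $R_m(-1) = -2(-1)^m - 2$ equals $-4$ when $m$ is even and $0$ when $m$ is odd. For $m$ even, both $t^{m-1}$ and $(m+1)t-m$ are negative on $(-\infty,0)$, so $R_m' > 0$ there; combined with $R_m(-1)=-4$ this forces $R_m<0$ throughout $(-\infty,-1]$. For $m$ odd, $t^{m-1}>0$ while $(m+1)t-m<0$ on $(-\infty,0)$, so $R_m'<0$ there; combined with $R_m(-1)=0$ this forces $R_m>0$ on $(-\infty,-1)$, and the root at $t=-1$ lies on the unit circle, not outside it.

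The whole argument is a routine calculus exercise; the only mild subtlety is the case split on the parity of $m$, together with the observation that although $t=-1$ can be a root of $R_m$, it sits on the unit circle and therefore does not violate the statement of the lemma.
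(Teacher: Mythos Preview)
Your proof is correct and follows essentially the same approach as the paper's: compute $R_m'(t) = (m+1)t^m - mt^{m-1}$, use monotonicity on $[1,\infty)$ together with $R_m(1)=-2$ to locate a unique simple root greater than $1$, and then do a parity split on $m$ to rule out real roots in $(-\infty,-1)$, noting that the root $t=-1$ in the odd case lies on the unit circle. The only cosmetic differences are that the paper uses $R_m(2)>0$ instead of $R_m(t)\to\infty$, and is terser about the negative-$t$ analysis that you spell out explicitly via the sign of $R_m'$.
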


\begin{proof} 
Taking the derivative $R_m'(t) = (m+1)t^m - mt^{m-1}$,  
we see that $R_m$ is increasing for $t > \frac{m}{m+1}$, and hence also for $t \ge 1$. 
Since $R_m(1) = -2 < 0$ and $R_m(2) > 0$, 
it follows that $R_m$ has a simple root $\mu_m$ with $1 < \mu_m < 2$.  
Similarly, we can show that for $t < 0$, $R_m$ has no roots for 
$m$ even, and one root if $m$ is odd.  In the odd case, $R_m(-1) = 0$, 
so $R_m$ has no real roots strictly less than $-1$. 
\end{proof}

\begin{lem}
\label{Rm2-lem}  
The sequence $\lambda(R_m)$ converges monotonically to $1$ from above.
\end{lem}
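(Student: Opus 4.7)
The plan is to use the explicit formula $R_m(t) = t^m(t-1) - 2$ together with the fact, already established in Lemma \ref{Rm-lem}, that the largest real root $\mu_m := \lambda(R_m)$ is the unique root of $R_m$ in $(1,\infty)$ and that $R_m$ is strictly increasing on $[1,\infty)$.

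First, I will prove monotonicity, i.e., $\mu_{m+1} < \mu_m$. The key observation is that $\mu_m$ satisfies the identity $\mu_m^m(\mu_m - 1) = 2$. Plugging $\mu_m$ into $R_{m+1}$ gives
\[
R_{m+1}(\mu_m) = \mu_m^{m+1}(\mu_m - 1) - 2 = \mu_m \cdot \mu_m^m(\mu_m - 1) - 2 = 2\mu_m - 2 > 0,
\]
since $\mu_m > 1$. Because $R_{m+1}$ is negative at $1$ (equal to $-2$), strictly increasing on $[1,\infty)$, and positive at $\mu_m$, its unique root in $(1,\infty)$ must lie strictly between $1$ and $\mu_m$, so $\mu_{m+1} < \mu_m$.

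Next, I will show the limit is $1$. The sequence $\{\mu_m\}$ is monotonically decreasing and bounded below by $1$, so it converges to some limit $L \geq 1$. Suppose for contradiction that $L > 1$. Then $\mu_m \geq L > 1$ for all $m$, so
\[
2 = \mu_m^m(\mu_m - 1) \geq L^m (L - 1) \longrightarrow \infty \quad \text{as } m \to \infty,
\]
a contradiction. Hence $L = 1$, and $\mu_m \searrow 1$ as claimed.

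No step here looks like a genuine obstacle; the proof is essentially a one-line convexity/monotonicity argument once one notices the identity $\mu_m^m(\mu_m - 1) = 2$. The only subtlety is checking that the identity forces the limit to be exactly $1$ (rather than, say, leaving open the possibility of a smaller accumulation point), but the boundedness of $\mu_m^m(\mu_m-1) = 2$ rules this out immediately.
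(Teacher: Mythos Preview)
Your proof is correct. The monotonicity step is essentially the same as the paper's, just with the substitution reversed: the paper evaluates $R_m$ at $\mu_{m+1}$ and finds it negative, while you evaluate $R_{m+1}$ at $\mu_m$ and find it positive; both conclude $\mu_{m+1} < \mu_m$ via the monotonicity of $R_m$ on $(1,\infty)$ established in Lemma~\ref{Rm-lem}.

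The convergence argument, however, is genuinely different. The paper uses Rouch\'e's theorem on the complement of a disk of radius $1+\epsilon$, comparing $g(t) = (t-1)/t$ with $h_m(t) = -2/t^{m+1}$ to show $R_m$ eventually has no roots outside that disk. Your approach is more elementary: you exploit the defining identity $\mu_m^m(\mu_m - 1) = 2$ directly, arguing that a limit $L > 1$ would force $L^m(L-1) \le 2$ for all $m$, which is impossible. This avoids complex analysis entirely and is arguably cleaner for this specific real-variable statement. The paper's Rouch\'e argument, on the other hand, actually proves slightly more (that \emph{all} roots of $R_m$, not just the largest real one, eventually lie in any neighborhood of the closed unit disk), though that extra information is not needed here.
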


\begin{proof}
 Since $M(R_m) = 2$, we know that $\mu_m = \lambda(R_m) > 1$.  
Take any $\epsilon > 0$. 
Let $D_\epsilon$ be the disk of radius $1+\epsilon$ around the origin in the complex plane. 
Let $g(t) = \frac{t-1}{t}$ and $h_m(t) = \frac{-2}{t^{m+1}}$.  
Then for large enough $m$, we have 
$$
|g(t)| = \left | \frac{t-1}{t} \right | > \left | \frac{2}{t^{m+1}} \right | = |h_m(t)|
$$
for all $t$ on the boundary of $D_\epsilon$, 
and $g(t)$ and $h_m(t)$ are holomorphic on the complement of $D_\epsilon$ in the Riemann sphere.  
By Rouch\'e's theorem, $g(t)$, $g(t) + h_m(t)$, and hence $R_m(t)$ 
have the same number of roots outside $D_\epsilon$, which is zero. 

To show the monotonicity consider $R_m(\mu_{m+1})$.  
Note that $(\mu_{m+1})^{m+1}(t-1) -2 = 0$. 
Hence we have 
\begin{eqnarray*}
R_m(\mu_{m+1}) &=&( \mu_{m+1})^m (t-1) -2\\
&=& ((\mu_{m+1})^m - (\mu_{m+1})^{m+1}) (t-1)\\
&<& 0.
\end{eqnarray*}
Since $R_m(t)$ is an increasing function for $t> 1$, 
we conclude that $\mu_{m+1}< \mu_m$.
\end{proof}

\fullref{Salem-Boyd-cor} and \fullref{Rm2-lem} imply the following. 

\begin{cor}
Fixing $m \ge 1$, the sequences $\lambda(\beta_{m,n})$ and $\lambda(\sigma_{m,n})$
converge to $\lambda(R_m)$ as sequences in $n$.  
Furthermore, we can make 
$\lambda(\beta_{m,n})$ and $\lambda(\sigma_{m,n})$ arbitrarily close to $1$  by taking 
$m$ and $n$ large enough.  
\end{cor}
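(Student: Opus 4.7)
The plan is to combine \fullref{CharEq-thm}, \fullref{Salem-Boyd-cor}, and \fullref{Rm2-lem} in a straightforward way. First, I would observe that, viewed as sequences indexed by $n$ with $m$ fixed, $T_{m,n}$ and $S_{m,n}$ are exactly the two Salem--Boyd sequences (with signs $+$ and $-$) associated to the monic integer polynomial $R_m$, up to the harmless shift $n \mapsto n+1$ in the exponent. By \fullref{CharEq-thm}, $\lambda(\beta_{m,n})$ and $\lambda(\sigma_{m,n})$ are respectively the largest roots of $T_{m,n}$ and $S_{m,n}$, so they coincide with $\lambda(T_{m,n})$ and $\lambda(S_{m,n})$ in the polynomial sense used in the preceding subsection.

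For the convergence statement, \fullref{Salem-Boyd-cor} applied to each of the two Salem--Boyd sequences immediately yields
\[
\lim_{n \to \infty} \lambda(\beta_{m,n}) \;=\; \lim_{n \to \infty} \lambda(T_{m,n}) \;=\; \lambda(R_m),
\]
and similarly $\lim_{n \to \infty} \lambda(\sigma_{m,n}) = \lambda(R_m)$.

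For the second assertion, I would combine this with \fullref{Rm2-lem}, which says that $\lambda(R_m) \to 1$ from above as $m \to \infty$. Given $\epsilon > 0$, first choose $m$ large enough that $\lambda(R_m) < 1 + \epsilon/2$, and then, with that $m$ fixed, choose $n$ large enough that both $|\lambda(\beta_{m,n}) - \lambda(R_m)|$ and $|\lambda(\sigma_{m,n}) - \lambda(R_m)|$ are less than $\epsilon/2$; the triangle inequality gives both dilatations less than $1 + \epsilon$.

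The main obstacle is essentially nonexistent: every ingredient has already been assembled in the preceding subsection, and the corollary is a direct citation chain. The one subtlety worth flagging is the identification of $\lambda(T_{m,n})$ (maximum modulus of roots) with the Perron--Frobenius spectral radius $\lambda(\beta_{m,n})$ of the real--edge transition matrix, but this is precisely what \fullref{BH-thm} combined with the characteristic polynomial computations of \fullref{CharEq-thm} guarantees, so no extra work is required.
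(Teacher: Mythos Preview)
Your proposal is correct and follows exactly the route the paper indicates: the corollary is stated immediately after \fullref{Salem-Boyd-cor} and \fullref{Rm2-lem} with the remark that these two results imply it, and your argument just spells out that citation chain (via \fullref{CharEq-thm}) in detail.
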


We now determine the monotonicity of $\lambda(\beta_{m,n})$ and 
$\lambda(\sigma_{m,n})$ for fixed $m \ge 1$. 

\begin{prop}
\label{Perron-prop}
Fixing $m \ge 1$,  
the dilatations $\lambda(\beta_{m,n})$ are strictly monotone decreasing, 
and the dilatations $\lambda(\sigma_{m,n})$ are strictly monotone increasing for $n \ge m+2$. 
\end{prop}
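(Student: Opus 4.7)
The plan is to exploit the Salem--Boyd structure of the characteristic polynomials established in \fullref{CharEq-thm} and reduce the monotonicity to a sign evaluation at the previous dilatation. From $T_{m,n}(t) = t^{n+1}R_m(t) + (R_m)_*(t)$ and $S_{m,n}(t) = t^{n+1}R_m(t) - (R_m)_*(t)$ one obtains the shift identities
$$T_{m,n+1}(t) = t\,T_{m,n}(t) + (1-t)(R_m)_*(t), \qquad S_{m,n+1}(t) = t\,S_{m,n}(t) + (t-1)(R_m)_*(t).$$
Since $(R_m)_*(t) = 1 - t - 2t^{m+1}$ equals $-2$ at $t=1$ and has strictly negative derivative on $[1,\infty)$, it is strictly negative there. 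Writing $\lambda_n = \lambda(\beta_{m,n}) > 1$ and $\mu_n = \lambda(\sigma_{m,n}) > 1$ and substituting into the identities, I obtain
$$T_{m,n+1}(\lambda_n) = (1-\lambda_n)(R_m)_*(\lambda_n) > 0, \qquad S_{m,n+1}(\mu_n) = (\mu_n - 1)(R_m)_*(\mu_n) < 0.$$

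To convert these sign inequalities into comparisons of Perron roots, I will appeal to the Salem--Boyd machinery. By \fullref{Rm-lem}, $R_m$ has exactly one root outside the unit circle, so $N(R_m) = 1$. Then \fullref{number-of-roots-theorem} yields $N(T_{m,n+1}),\,N(S_{m,n+1}) \le 1$. Since $\lambda_{n+1}$ and $\mu_{n+1}$ are pseudo-Anosov dilatations strictly greater than $1$, each is the unique root of its polynomial outside the closed unit disk, and the multiplicity count in Boyd's theorem forces it to be simple. Because each polynomial has positive leading coefficient and tends to $+\infty$ at $+\infty$, the sign change at the simple Perron root gives $T_{m,n+1}<0$ on $(1,\lambda_{n+1})$ and $T_{m,n+1}>0$ on $(\lambda_{n+1},\infty)$, and analogously for $S_{m,n+1}$ on the two sides of $\mu_{n+1}$. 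The evaluations above then force $\lambda_n > \lambda_{n+1}$ and $\mu_n < \mu_{n+1}$, as required.

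The main technical step is the simplicity and uniqueness of the Perron root outside the unit disk, since without it the sign-change argument alone does not determine which side of $\lambda_{n+1}$ (resp.\ $\mu_{n+1}$) contains $\lambda_n$ (resp.\ $\mu_n$). This is supplied by the sharp count $N(R_m) = 1$ from \fullref{Rm-lem} together with \fullref{number-of-roots-theorem}. The remaining ingredients---the shift identity, the negativity of $(R_m)_*$ on $[1,\infty)$, and the sign-change conclusion---are all elementary.
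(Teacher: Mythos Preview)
Your argument has a genuine gap: the claim $N(R_m)=1$ is false for $m\ge 2$.  \fullref{Rm-lem} only asserts that $R_m$ has exactly one \emph{real} root outside the unit circle; it says nothing about complex roots.  In fact the proof of \fullref{MRm-lem} shows that $R_m$ has \emph{no} roots strictly inside the unit circle, so every root lies on or outside it.  For $m=2$, for instance, $R_2(t)=t^3-t^2-2$ has the real root $\mu_2\approx 1.696$ and a complex conjugate pair whose product has modulus $2/\mu_2>1$, hence $N(R_2)=3$.  In general $N(R_m)=m+1$ when $m$ is even and $N(R_m)=m$ when $m$ is odd (the root at $t=-1$ lies on the circle), so \fullref{number-of-roots-theorem} only gives $N(T_{m,n+1})\le m+1$, and you cannot conclude that $\lambda_{n+1}$ is the unique root of $T_{m,n+1}$ in $(1,\infty)$.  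Without that, the positive sign $T_{m,n+1}(\lambda_n)>0$ does not force $\lambda_n>\lambda_{n+1}$: a monic polynomial can be positive both above and below its largest real root.

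The fix is to reverse the direction of the shift for the $\beta$ case.  From your identity one gets $t\,T_{m,n-1}(t)=T_{m,n}(t)-(1-t)(R_m)_*(t)$, so
\[
\lambda_n\,T_{m,n-1}(\lambda_n)=(\lambda_n-1)(R_m)_*(\lambda_n)<0,
\]
hence $T_{m,n-1}(\lambda_n)<0$.  Now you only need that $\lambda_{n-1}$ is the \emph{largest real root} of the monic polynomial $T_{m,n-1}$, which gives $T_{m,n-1}(t)>0$ for all $t>\lambda_{n-1}$, and therefore $\lambda_n<\lambda_{n-1}$.  This is exactly the route the paper takes, and it needs no information about $N(R_m)$.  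Your $\sigma$ computation $S_{m,n+1}(\mu_n)<0$ is correct and already yields $\mu_n<\mu_{n+1}$ by this same largest-real-root argument, so the Salem--Boyd appeal is unnecessary there as well.
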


\begin{proof}  
Consider $f(t) = (R_m)_*(t) = -2t^{m+1} - t + 1$. 
Then, for $t > 0$, 
$$f'(t) = -2(m+1)t^m - 1 < 0.$$
Also $f(1) = -2 < 0$. 
Since
$b_{m,n} = \lambda(\beta_{m,n}) > 1$, and for $n \ge m+2$, $s_{m,n} = \lambda(\sigma_{m,n}) > 1$, 
it follows that $(R_m)_*(b_{m,n})$ and $(R_m)_*(s_{m,n})$ are both negative. 
We have
\begin{eqnarray*}
0 &=& T_{m,n}(b_{m,n}) = (b_{m,n})^{n+1} R_m(b_{m,n}) + (R_m)_*(b_{m,n}),\hspace{2mm}
\mbox{and}
\\
0 &=& S_{m,n}(s_{m,n}) = (s_{m,n})^{n+1} R_m(s_{m,n}) - (R_m)_*(s_{m,n}),
\end{eqnarray*}
which imply that $R_m(b_{m,n})  >  0$ and $R_m(s_{m,n}) < 0$. 
Since $R_m$ is increasing for $t > 1$, we have 
\begin{equation}
\label{s-m-b-equation}
s_{m,n} < \mu_m < b_{m,n}. 
\end{equation}
Plug $b_{m,n}$ into $T_{m,n-1}$, and subtract $T_{m,n}(b_{m,n}) = 0$: 
\begin{eqnarray*}
T_{m,n-1}(b_{m,n}) &=&(b_{m,n})^{n-1}R_m(b_{m,n}) + (R_m)_*(b_{m,n})\\
&=&((b_{m,n})^{n-1} - (b_{m,n})^n)R_m(b_{m,n})\\
&<& 0
\end{eqnarray*}
Since $b_{m,n-1}$ is the largest real root of $T_{m,n-1}$, we have $b_{m,n} < b_{m,n-1}$. 

We can show that $s_{m,n} < s_{m,n+1}$ for $n \ge m+2$ in a similar way, 
by adding the formula for $S_{m,n}(s_{m,n})$ to $S_{m,n+1}(s_{m,n})$. 
\end{proof}

The inequalities \eqref{s-m-b-equation} give the following. 

\begin{cor} 
\label{inequality-cor} 
For all $m,n \ge 1$ with $|m-n| \ge 2$, $\lambda(\beta_{m,n})> \lambda(\sigma_{m,n})$. 
\end{cor}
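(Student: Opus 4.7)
The plan is to extract the key inequality already buried inside the proof of \fullref{Perron-prop} and then close the remaining case by symmetry. Throughout, write $b_{m,n} = \lambda(\beta_{m,n})$ and $s_{m,n} = \lambda(\sigma_{m,n})$, and let $\mu_m = \lambda(R_m) > 1$ denote the unique real root of $R_m$ greater than $1$ from \fullref{Rm-lem}.

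First I would handle the case $n \ge m+2$ directly. The polynomial $(R_m)_*(t) = -2t^{m+1} - t + 1$ has $(R_m)_*(1) = -2 < 0$ and is strictly decreasing on $(0,\infty)$, so $(R_m)_*(x) < 0$ for every $x > 1$. Applying this to $x = b_{m,n}$ in the identity $T_{m,n}(b_{m,n}) = b_{m,n}^{n+1} R_m(b_{m,n}) + (R_m)_*(b_{m,n}) = 0$ forces $R_m(b_{m,n}) > 0$, while applying it to $x = s_{m,n}$ in $S_{m,n}(s_{m,n}) = s_{m,n}^{n+1} R_m(s_{m,n}) - (R_m)_*(s_{m,n}) = 0$ forces $R_m(s_{m,n}) < 0$. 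Since $R_m$ is monotone increasing on $(1,\infty)$ (from the derivative computation in \fullref{Rm-lem}) and its only root in this range is $\mu_m$, we conclude
\begin{equation*}
s_{m,n} < \mu_m < b_{m,n},
\end{equation*}
which is exactly the desired strict inequality in this case.

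Next I would dispatch the remaining case $m \ge n+2$ by symmetry. \fullref{Bmn-classification-thm} yields $\lambda(\beta_{m,n}) = \lambda(\beta_{n,m})$, and \fullref{Smn-classification-thm} yields $\lambda(\sigma_{m,n}) = \lambda(\sigma_{n,m})$. Since $n \le m-2$, the pair $(n,m)$ falls in the range already handled, so
\begin{equation*}
\lambda(\sigma_{m,n}) = \lambda(\sigma_{n,m}) < \lambda(\beta_{n,m}) = \lambda(\beta_{m,n}),
\end{equation*}
completing the proof.

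There is really no hard step here: the whole content is the sign analysis of $(R_m)_*$ on $(1,\infty)$, and this has effectively been carried out in the preceding proof. The only mild subtlety is making sure the asymmetric hypothesis $n \ge m+2$ used in \fullref{Perron-prop} is leveraged via \fullref{Bmn-classification-thm} and \fullref{Smn-classification-thm} to cover the symmetric case $m \ge n+2$, and recognizing that $|m-n| \ge 2$ precisely excludes the reducible/periodic values of $\sigma_{m,n}$ where $s_{m,n}$ would be undefined.
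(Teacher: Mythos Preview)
Your argument is correct and is essentially the paper's own proof: the paper derives the chain $s_{m,n} < \mu_m < b_{m,n}$ inside the proof of \fullref{Perron-prop} (equation \eqref{s-m-b-equation}) and then simply cites it for the corollary. You reproduce exactly that sign analysis of $(R_m)_*$ for the case $n \ge m+2$, and your explicit invocation of the symmetry $\lambda(\beta_{m,n})=\lambda(\beta_{n,m})$ and $\lambda(\sigma_{m,n})=\lambda(\sigma_{n,m})$ to cover $m \ge n+2$ just spells out what the paper leaves implicit.
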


We now fix $2g=m+n$ ($g \ge 2$), and show that among the braids $\beta_{m,n}$ and 
$\sigma_{m,n}$, $\sigma_{g-1,g+1}$ has the least dilatation. 

\begin{prop}
\label{min-prop}  
\begin{enumerate}
 \item[(1)] 
 For $k=1,\dots,m-1$, 
 \begin{eqnarray*}
 \lambda(\beta_{m,m}) &<& \lambda(\beta_{m-k,m+k}), \hspace{2mm}\mbox{and}
\\
\lambda(\beta_{m,m+1}) &<& \lambda(\beta_{m-k,m+k+1}). 
 \end{eqnarray*}
 \item[(2)]
 For $k=2,\dots,m-1$, 
\begin{eqnarray*}
\lambda(\sigma_{m-1,m+1}) &<& \lambda(\sigma_{m-k,m+k}), \hspace{2mm}\mbox{and}
\\
\lambda(\sigma_{m-1,m+2}) &<& \lambda(\sigma_{m-k,m+k+1}). 
\end{eqnarray*}
\end{enumerate}
\end{prop}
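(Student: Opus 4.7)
The plan is to work directly with the characteristic polynomials from \fullref{CharEq-thm} and compare their values at the candidate minimizers. First I would expand $R_m(t) = t^{m+1} - t^m - 2$ and $(R_m)_*(t) = 1 - t - 2t^{m+1}$ to obtain the clean forms
\begin{eqnarray*}
T_{m,n}(t) &=& t^{m+n+2} - t^{m+n+1} - 2t^{n+1} - 2t^{m+1} - t + 1,\\
S_{m,n}(t) &=& t^{m+n+2} - t^{m+n+1} - 2t^{n+1} + 2t^{m+1} + t - 1.
\end{eqnarray*}
In $T_{m,n}$ the symmetry $m \leftrightarrow n$ is manifest, consistent with \fullref{Bmnplus-lem}; in $S_{m,n}$ the two middle terms change sign, which is the feature that forces the inequalities in (2) to go in the same direction as those in (1).

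For part (1), a direct algebraic manipulation produces the factorizations
\begin{eqnarray*}
T_{m,m}(t) - T_{m-k,m+k}(t) &=& 2t^{m-k+1}(t^k - 1)^2,\\
T_{m,m+1}(t) - T_{m-k,m+k+1}(t) &=& 2t^{m-k+1}(t^k - 1)(t^{k+1} - 1),
\end{eqnarray*}
each strictly positive for $t > 1$ and $k \ge 1$. Setting $t = \lambda(\beta_{m,m}) > 1$ in the first identity and using $T_{m,m}(\lambda(\beta_{m,m})) = 0$ gives $T_{m-k,m+k}(\lambda(\beta_{m,m})) < 0$; the second identity similarly gives $T_{m-k,m+k+1}(\lambda(\beta_{m,m+1})) < 0$. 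Since each $T_{m',n'}$ has positive leading coefficient and $\lambda(\beta_{m',n'})$ is by definition its largest real root, $T_{m',n'}(t) > 0$ for all $t > \lambda(\beta_{m',n'})$; hence $T_{m',n'}(x) < 0$ forces $x < \lambda(\beta_{m',n'})$, and the two inequalities of (1) follow.

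For part (2), the analogous computation yields
\begin{eqnarray*}
S_{m-1,m+1}(t) - S_{m-k,m+k}(t) &=& 2t^{m-k+1}(t^{k-1} - 1)(t^{k+1} + 1),\\
S_{m-1,m+2}(t) - S_{m-k,m+k+1}(t) &=& 2t^{m-k+1}(t^{k-1} - 1)(t^{k+2} + 1),
\end{eqnarray*}
both strictly positive for $t > 1$ and $k \ge 2$ (which is exactly the range in the statement; $k = 1$ recovers the polynomial itself). Applying the same sign-of-polynomial argument to $S_{m',n'}$, and recalling from \eqref{s-m-b-equation} that $\lambda(\sigma_{m',n'}) > 1$, concludes (2).

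The main ``obstacle'' is really just spotting the four factorizations above. Each right-hand side is a $\pm$ combination of four monomials of the form $t^{m-k+1}$, $t^{m\pm 1}$, $t^{m+k+c}$ whose exponents split as $a(b-1)+(b-1)=(a+1)(b-1)$ or $(b-1)^2$, so the factored form is essentially forced; once in hand, the rest of the argument is a one-line application of the intermediate value theorem together with positivity of the leading coefficient. The conceptual content of the plan is the observation that, even though $S_{m,n}$ is not symmetric in $m \leftrightarrow n$, both $T_{m,n}$ and $S_{m,n}$ have enough monotonicity in $|m-n|$ (with $m+n$ fixed and $t>1$) to force the middle pair to minimize the dilatation within each Salem--Boyd family.
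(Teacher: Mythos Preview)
Your proposal is correct and follows essentially the same approach as the paper: compute the difference of the relevant characteristic polynomials, factor it as a manifestly positive expression for $t>1$, and conclude via the largest-real-root property. The paper carries out only the first case in detail (obtaining the same factor $-2\lambda^{m-k+1}(\lambda^k-1)^2$) and dismisses the remaining three with ``similarly,'' whereas you supply all four factorizations explicitly.
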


\begin{proof}  
Let $\lambda= \lambda(\beta_{m,m})$.  
Then plugging $\lambda$ into $T_{m-k,m+k}$ gives 
\begin{eqnarray*}
T_{m-k,m+k}(\lambda) &=& 
\lambda^{m+k+1}(\lambda^{m-k} (\lambda - 1) - 2) - 2 \lambda^{m-k+1} - \lambda + 1\\
&=& \lambda^{2m+2} - \lambda^{2m+1} - 2\lambda^{m+k+1} - 2\lambda^{m-k+1} - \lambda +1. 
\end{eqnarray*}
Subtracting 
$$
0 = T_{m,m}(\lambda) = \lambda^{2m+2} - \lambda^{2m+1} - 4\lambda^{m+1} - \lambda + 1, 
$$
we obtain
\begin{eqnarray*}
T_{m-k,m+k}(\lambda) = 4 \lambda^{m+1} - 2\lambda^{m+k+1} - 2\lambda^{m-k+1}
= -2\lambda^{m-k+1}(\lambda^k-1)^2
< 0.
\end{eqnarray*}
Since $\lambda(\beta_{m-k,m+k})$ is the largest real root of $T_{m-k,m+k}$, we have 
$\lambda(\beta_{m,m})<\lambda(\beta_{m-k,m+k})$. 

The other inequalities are proved similarly. 
\end{proof}

\begin{prop}
\label{min-Smn-Bmm-prop}
For $m \ge 2$, 
\begin{eqnarray*}
\lambda(\beta_{m,m}) &>& \lambda(\sigma_{m-1,m+1}), \hspace{2mm}{and}
\\
\lambda(\beta_{m,m+1}) &\ge& \lambda(\sigma_{m-1,m+2})
\end{eqnarray*}
with equality if and only if $m=2$.
\end{prop}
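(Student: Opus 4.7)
Following the strategy of \fullref{min-prop}, my plan is to plug $\lambda(\beta_{m,m})$ (respectively $\lambda(\beta_{m,m+1})$) into $S_{m-1,m+1}$ (respectively $S_{m-1,m+2}$) and determine the sign of the result. Using the formulas of \fullref{CharEq-thm}, a direct computation gives
\begin{align*}
T_{m,m}(t)-S_{m-1,m+1}(t) &= 2\bigl[t^m(t^2-2t-1)-(t-1)\bigr],\\
T_{m,m+1}(t)-S_{m-1,m+2}(t) &= 2\bigl[t^m(t^3-t^2-t-1)-(t-1)\bigr].
\end{align*}
Writing $\lambda_1=\lambda(\beta_{m,m})$ and $\lambda_2=\lambda(\beta_{m,m+1})$, the identities $T_{m,m}(\lambda_1)=T_{m,m+1}(\lambda_2)=0$ yield
\begin{align*}
S_{m-1,m+1}(\lambda_1)&=2\bigl[(\lambda_1-1)-\lambda_1^m(\lambda_1^2-2\lambda_1-1)\bigr],\\
S_{m-1,m+2}(\lambda_2)&=2\bigl[(\lambda_2-1)-\lambda_2^m(\lambda_2^3-\lambda_2^2-\lambda_2-1)\bigr].
\end{align*}

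For the first inequality, I plan to prove $\lambda_1<1+\sqrt{2}$ for every $m\ge 2$, which forces $\lambda_1^2-2\lambda_1-1<0$ and hence $S_{m-1,m+1}(\lambda_1)>0$, yielding $\lambda_1>\lambda(\sigma_{m-1,m+1})$ since $\lambda(\sigma_{m-1,m+1})$ is the largest real root of $S_{m-1,m+1}$. Because $\lambda(\beta_{m,m})$ is monotone decreasing in $m$ (combining \fullref{Perron-prop} with the symmetry $\lambda(\beta_{m,n})=\lambda(\beta_{n,m})$ from \fullref{Bmn-classification-thm}), it suffices to check the base case $m=2$, that is, to verify $T_{2,2}(1+\sqrt{2})>0$ in closed form.

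For the second inequality with $m=2$, I would establish equality by factoring
\[
T_{2,3}(t)=(t^3+t^2+t+1)(t^4-2t^3+t^2-2t+1);
\]
since the cubic factor has only roots of unity, $\lambda(\beta_{2,3})$ must be a root of the quartic, and then the identity $\lambda_2^5-\lambda_2^4-\lambda_2^3-\lambda_2^2-\lambda_2+1=(\lambda_2+1)(\lambda_2^4-2\lambda_2^3+\lambda_2^2-2\lambda_2+1)$ shows that the bracket in $S_{1,4}(\lambda_2)$ vanishes. For $m\ge 3$, my plan is to show $\lambda_2<\tau$, where $\tau$ denotes the real root of $t^3-t^2-t-1$, which makes $\lambda_2^3-\lambda_2^2-\lambda_2-1<0$ and gives strict inequality. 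As before, the monotonicity of $\lambda(\beta_{m,m+1})$ in $m$ reduces this to verifying $\lambda(\beta_{3,4})<\tau$, which I would prove by expanding $T_{3,4}(\tau)$ using $\tau^3=\tau^2+\tau+1$ and checking positivity.

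The main obstacle will be the clean symbolic execution of the two threshold inequalities $T_{2,2}(1+\sqrt{2})>0$ and $T_{3,4}(\tau)>0$: the idea is to exploit the defining relations of $1+\sqrt{2}$ and $\tau$ to reduce each expression to a manifestly positive quantity, rather than relying on floating-point estimates. A secondary subtlety to handle carefully is the algebraic coincidence underlying the $m=2$ equality, namely that the characteristic polynomials of $\beta_{2,3}$ and $\sigma_{1,4}$ share the same quartic Salem-like factor, which dictates their common dilatation.
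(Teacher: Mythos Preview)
Your approach is the mirror image of the paper's: you evaluate $S_{m-1,m+1}$ (resp.\ $S_{m-1,m+2}$) at $\lambda(\beta_{m,m})$ (resp.\ $\lambda(\beta_{m,m+1})$) and aim for a positive sign, whereas the paper evaluates $T_{m,m}$ (resp.\ $T_{m,m+1}$) at $\lambda(\sigma_{m-1,m+1})$ (resp.\ $\lambda(\sigma_{m-1,m+2})$) and obtains a negative sign. The underlying algebraic identity $T_{m,m}-S_{m-1,m+1}=2[t^m(t^2-2t-1)-(t-1)]$ is the same in both; only the point of evaluation differs.

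That choice of direction matters, however, and your direction has a genuine gap. Showing $T(\lambda_\sigma)<0$ for a monic $T$ immediately forces a real root of $T$ beyond $\lambda_\sigma$, hence $\lambda_\beta>\lambda_\sigma$. Your inference, from $S_{m-1,m+1}(\lambda_1)>0$ to $\lambda_1>\lambda(\sigma_{m-1,m+1})$, is \emph{not} automatic: a monic polynomial can be positive at a point yet still have its largest real root to the right (it would just have an even number of roots, with multiplicity, in between). You would need an auxiliary argument that $S_{m-1,m+1}$ has a unique real root in $(1,\infty)$, and nothing in the paper establishes this. The clean fix is simply to swap directions and evaluate $T$ at $\lambda_\sigma$, as the paper does.

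The paper's direction is also more economical on the needed bounds. For the first inequality it uses $\lambda(\sigma_{m-1,m+1})<\mu_{m-1}\le 2<1+\sqrt{2}$, immediate from \eqref{s-m-b-equation}; for the second it uses $\lambda(\sigma_{m-1,m+2})<\lambda(R_{m-1})\le\lambda(R_2)\approx 1.696<\tau$ for $m\ge 3$, again from \eqref{s-m-b-equation} together with the monotonicity of $\lambda(R_m)$. Your route instead has to establish $\lambda(\beta_{2,2})<1+\sqrt{2}$ and $\lambda(\beta_{3,4})<\tau$ via monotonicity plus explicit base-case computations---correct, but more work. On the other hand, your treatment of the $m=2$ equality is sharper than the paper's terse ``we check that $T_{2,3}=S_{1,4}$'' (which is literally false as a polynomial identity): you correctly isolate the shared quartic factor $t^4-2t^3+t^2-2t+1$ that carries the common dilatation.
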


\begin{proof} 
Let $\lambda=\lambda(\sigma_{m-1,m+1})$.  
Then 
$T_{m,m}(\lambda) = \lambda^{2m+2} - \lambda^{2m+1} - 4\lambda^{m+1} - \lambda + 1$. 
Plugging in the identity 
$$
0 = S_{m-1,m+1}(\lambda) = 
\lambda^{2m+2}-\lambda^{2m+1} - 2\lambda^{m+2} + 2\lambda^m + \lambda - 1,
$$
and subtracting this from $T_{m,m}(\lambda)$, we have 
\begin{eqnarray*}
T_{m,m}(\lambda) = 2\lambda^{m+2} - 4\lambda^{m+1} - 2\lambda^m - 2\lambda + 2
= 2\lambda^m(\lambda^2 - 2\lambda + 1) + 2(1-\lambda).
\end{eqnarray*}
The roots of $t^2 - 2t +1$ are $1 \pm \sqrt{2}$.  
Since $1 - \sqrt{2} < 1 < \lambda < 2 < 1 + \sqrt{2}$, 
$\lambda^2-2\lambda+1$ and $1-\lambda$ are both negative, and hence $T_{m,m}(\lambda) < 0$. 
Since $\lambda(\beta_{m,m})$ is the largest real root of $T_{m,m}(t)$, it follows that 
$\lambda(\sigma_{m-1,m+1}) = \lambda < \lambda(\beta_{m,m})$. 

For the second inequality, we plug in $\lambda = \lambda(\sigma_{m-1,m+2})$ 
into $T_{m,m+1}$.  This gives 
$$
T_{m,m+1}(\lambda)=2\lambda^m(\lambda^3 - \lambda^2 - \lambda - 1) - \lambda  - 1.
$$
Thus,  $\lambda^3 - \lambda^2 - \lambda - 1 < 0$ would imply $T_{m,m+1} (\lambda) < 0$.  
The polynomial $g(t) = t^3 - t^2 - t -1$ has one real root ($\approx 1.83929$) 
and is increasing for $t >1$. 
Since $\lambda(R_m)$ is decreasing with $m$, 
and $\lambda < \lambda(R_2) \approx 1.69562 < 1.8$ by \eqref{s-m-b-equation}, 
we see that $T_{m,m+1}(\lambda) < 0$ for $m \ge 3$.   
For the remaining case, we check that $T_{2,3}  = S_{1,4}$. 
\end{proof}

Propositions~\ref{min-prop} and \ref{min-Smn-Bmm-prop} show the following. 
\begin{cor}
\label{min-braid-cor}  
The least dilatation among $\sigma_{m,n}$ and $\beta_{m,n}$ 
for $m+n=2g$ $(g \ge 2)$ is given by $\lambda(\sigma_{g-1,g+1})$. 
\end{cor}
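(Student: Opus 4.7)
The plan is to reduce to a straightforward bookkeeping argument that combines \fullref{min-prop} and \fullref{min-Smn-Bmm-prop} with the symmetry results already established.

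First, I would use the symmetries \fullref{Bmn-conj-prop} and \fullref{Smn-sym-prop} to restrict attention to pairs $(m,n)$ with $m \le n$. Together with the constraint $m+n = 2g$, this means the only $\beta$-candidates are $\beta_{g-j,g+j}$ for $j=0,1,\dots,g-1$, and by \fullref{Smn-classification-thm} the only $\sigma$-candidates that are pseudo-Anosov are $\sigma_{g-j,g+j}$ for $j=1,2,\dots,g-1$ (the case $j=0$ is periodic, and no $\sigma_{m,m+1}$ arises since $m+n$ is even).

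Next I would apply \fullref{min-prop} with $m=g$. Part (1) gives $\lambda(\beta_{g,g}) < \lambda(\beta_{g-k,g+k})$ for $k=1,\dots,g-1$, so among the $\beta$-candidates the minimum dilatation is realized by $\beta_{g,g}$. Part (2) gives $\lambda(\sigma_{g-1,g+1}) < \lambda(\sigma_{g-k,g+k})$ for $k=2,\dots,g-1$, so among the $\sigma$-candidates the minimum dilatation is realized by $\sigma_{g-1,g+1}$.

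Finally, to compare the two minima, I would invoke \fullref{min-Smn-Bmm-prop} (taking $m=g$, which is $\ge 2$), giving
\[
\lambda(\beta_{g,g}) \;>\; \lambda(\sigma_{g-1,g+1}).
\]
Chaining the three blocks of inequalities yields $\lambda(\sigma_{g-1,g+1}) \le \lambda(\sigma_{m,n})$ and $\lambda(\sigma_{g-1,g+1}) < \lambda(\beta_{m,n})$ for every pseudo-Anosov $\sigma_{m,n}$ and $\beta_{m,n}$ with $m+n=2g$, which is exactly the statement. There is no genuine obstacle here: all the analytic work has been absorbed into the two preceding propositions, and the corollary is a purely combinatorial assembly of their conclusions together with the symmetry reductions.
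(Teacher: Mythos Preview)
Your proposal is correct and follows essentially the same approach as the paper, which simply states that the corollary follows from \fullref{min-prop} and \fullref{min-Smn-Bmm-prop}; you have merely spelled out the bookkeeping (symmetry reduction, exclusion of the periodic case $\sigma_{g,g}$, and the chaining of inequalities) that the paper leaves implicit.
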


By \fullref{Salem-Boyd-cor}, 
\fullref{Rm2-lem} and \fullref{Perron-prop}, 
 the dilatations $\lambda(\sigma_{m,n})$ for $n \ge m+2$ 
converge to $1$ as $m,n$ approach infinity.  
We prove the following stronger statement, which implies Theorems~\ref{inequalities-thm} and \ref{asymp-thm}. 

\begin{prop}
\label{min-Smn-prop} 
For $g \ge 2$, 
$$
\frac{\log(2+\sqrt{3})}{g+1} < \log (\lambda(\sigma_{g-1,g+1})) < \frac{ \log (2 +\sqrt{3})}{g}.
$$
\end{prop}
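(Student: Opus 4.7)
The plan is to translate the two desired logarithmic inequalities into sign conditions on the polynomial whose largest root is $\lambda := \lambda(\sigma_{g-1,g+1})$, and then to verify those sign conditions via a single algebraic identity. Writing $a = 2 + \sqrt 3$, the bounds are equivalent to $\lambda^{g+1} > a$ and $\lambda^{g} < a$. By \fullref{CharEq-thm}, $\lambda$ is the largest real root of
$$S_{g-1,g+1}(t) = t^{g+2} R_{g-1}(t) - (R_{g-1})_*(t) = t^{2g+2} - t^{2g+1} - 2t^{g+2} + 2t^g + t - 1,$$
using $R_{g-1}(t) = t^g - t^{g-1} - 2$ and $(R_{g-1})_*(t) = -2t^g - t + 1$. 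Since $N(R_{g-1}) = 1$ by \fullref{Rm-lem}, \fullref{number-of-roots-theorem} gives $N(S_{g-1,g+1}) \le 1$, so $\lambda$ is in fact the unique real root of $S_{g-1,g+1}$ in $(1,\infty)$. The factorization $S_{g-1,g+1}(t) = (t-1)(t^{2g+1} - 2t^{g+1} - 2t^g + 1)$ shows that $S_{g-1,g+1}$ is negative just to the right of $1$, so $S_{g-1,g+1}(t) < 0$ on $(1,\lambda)$ and $S_{g-1,g+1}(t) > 0$ on $(\lambda,\infty)$. Thus it suffices to check $S_{g-1,g+1}(a^{1/g}) > 0$ and $S_{g-1,g+1}(a^{1/(g+1)}) < 0$.

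The key algebraic trick (the one attributed to Minakawa in the acknowledgements) is that $a = 2 + \sqrt 3$ is the larger root of $x^2 - 4x + 1 = 0$, i.e., $a^2 - 2a = 2a - 1 = 3 + 2\sqrt 3$ and $a^2 - 1 = 2(3 + 2\sqrt 3)$. Substituting $t = a^{1/g}$, so that $t^g = a$, $t^{g+2} = at^2$, $t^{2g+1} = a^2 t$, and $t^{2g+2} = a^2 t^2$, collapses the six monomials of $S_{g-1,g+1}$ into a quadratic in $t$:
$$S_{g-1,g+1}(a^{1/g}) = (a^2 - 2a)\,t^2 - (a^2 - 1)\,t + (2a - 1) = (3 + 2\sqrt 3)(t - 1)^2,$$
which is strictly positive at $t = a^{1/g} > 1$. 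This yields the upper bound $\lambda < a^{1/g}$.

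Substituting $t = a^{1/(g+1)}$ instead, so that $t^{g+1} = a$, $t^{g+2} = at$, $t^g = a/t$, and $t^{2g+2} = a^2$, and clearing the inverse power of $t$ by multiplying through, we get by the same identity
$$t\,S_{g-1,g+1}(a^{1/(g+1)}) = -(2a - 1)\,t^2 + (a^2 - 1)\,t - (a^2 - 2a) = -(3 + 2\sqrt 3)(t - 1)^2 < 0,$$
which gives the lower bound $\lambda > a^{1/(g+1)}$. The main obstacle is spotting \emph{why} the constant $2 + \sqrt 3$ is the right one: it is precisely the value $a > 1$ for which the coefficients $a^2 - 2a$ and $2a - 1$ coincide, forcing the substituted polynomial to collapse into a positive multiple of $(t - 1)^2$. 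Once that identity is in hand, both inequalities fall out of routine polynomial arithmetic.
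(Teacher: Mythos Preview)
Your central algebraic identity is correct and elegant, but there is a genuine gap in the supporting step. You claim $N(R_{g-1}) = 1$ by \fullref{Rm-lem}, yet that lemma only counts \emph{real} roots outside the unit circle, whereas $N(\cdot)$ counts all roots. In fact $N(R_{g-1}) = 1$ is false for $g \ge 3$: for example $R_2(t) = t^3 - t^2 - 2$ has no roots on or inside the unit circle (the proof of \fullref{MRm-lem} rules out the interior, and one checks the circle directly), so all three roots lie outside and $N(R_2) = 3$. Hence \fullref{number-of-roots-theorem} does not yield $N(S_{g-1,g+1}) \le 1$, and your assertion that $\lambda$ is the unique real root in $(1,\infty)$ is unjustified. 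Note that only the \emph{upper} bound is affected: the lower bound needs merely that $S_{g-1,g+1} > 0$ on $(\lambda,\infty)$, which follows from $\lambda$ being the largest root, so $S_{g-1,g+1}(a^{1/(g+1)}) < 0$ already forces $a^{1/(g+1)} < \lambda$. But the upper bound requires $S_{g-1,g+1} < 0$ on all of $(1,\lambda)$, and for that you need the missing uniqueness.

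The uniqueness does hold and can be supplied by elementary calculus on $Q(t) = t^{2g+1} - 2t^{g+1} - 2t^g + 1$: writing $Q'(t) = t^{g-1}\bigl((2g+1)t^{g+1} - 2(g+1)t - 2g\bigr)$, the bracketed factor is strictly increasing on $[1,\infty)$ (its derivative $(g+1)\bigl((2g+1)t^g - 2\bigr)$ is positive there) and negative at $t=1$, so $Q$ first decreases then increases on $(1,\infty)$ with $Q(1) = -2$, giving exactly one root. With this patch your proof is complete, and it takes a genuinely different route from the paper. The paper gets the lower bound from the quadratic formula applied to $\lambda(\lambda^g)^2 - 2(\lambda+1)\lambda^g + 1 = 0$, and the upper bound from Minakawa's inequality comparing $Q$ with $\tfrac12(t^{2g} - 4t^g + 1)$. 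Your unified substitution, exploiting $a^2 - 2a = 2a - 1 = \tfrac12(a^2 - 1)$ to collapse $S_{g-1,g+1}(a^{1/g})$ and $S_{g-1,g+1}(a^{1/(g+1)})$ into multiples of $(t-1)^2$, is more symmetric and arguably more transparent once the root-count is correctly established.
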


\begin{proof} 
Using \fullref{CharEq-thm}, we see that $\lambda = \lambda(\sigma_{g-1,g+1})$ satisfies 
\begin{eqnarray}
\label{dil-eqn}
0=\lambda^{2g+1} - 2\lambda^{g+1} - 2\lambda^g + 1
=\lambda(\lambda^g)^2 - 2(\lambda+1)\lambda^g + 1. 
\end{eqnarray}
Since $\lambda$ is the largest real solution, the quadratic formula gives 
\begin{eqnarray*}
\lambda^g = \frac{2(\lambda+1) + \sqrt{4(\lambda+1)^2 - 4\lambda}}{2\lambda}
= \frac{\lambda+1 + \sqrt{\lambda^2 +  \lambda + 1}}{\lambda}. 
\end{eqnarray*}
It follows that 
\begin{eqnarray}
\label{dilatation-eqn}
\lambda^{g+1} &=& \lambda+1 + \sqrt{\lambda^2 + \lambda+1}.
\end{eqnarray} 
Since $1 < \lambda < 2$ for all $g \ge 2$, \eqref{dilatation-eqn} implies 
$2 + \sqrt{3} < \lambda^{g+1} < 3+\sqrt{7}$. 

We improve the upper bound using an argument conveyed to us by Minakawa. 
Rewrite \eqref{dil-eqn} as follows 
\begin{eqnarray*}
0 =\lambda^{2g+1} + \lambda^{2g} - \lambda^{2g} - 2(\lambda+1)\lambda^g + 1
=\lambda^{2g}(\lambda+1) - (\lambda^{2g} - 1) - 2(\lambda+1)\lambda^g. 
\end{eqnarray*}
Factoring out  $(\lambda+1)$ gives 
$$
0 = \lambda^{2g} - \frac{\lambda^{2g}-1}{\lambda+1} - 2 \lambda^g.
$$
On the other hand, since $\lambda > 1$, we have 
$$
\frac{\lambda^{2g} -1}{\lambda+1} < \frac{1}{2}(\lambda^{2g}-1).
$$
This implies the inequality 
$$
x^{2g} - \frac{x^{2g}-1}{x+1} - 2x^g >  
x^{2g} - \frac{1}{2}(x^{2g}-1)-2x^g=\frac{1}{2}(x^{2g} - 4x^g + 1) =: p(x)
$$
for $x$ near $\lambda$.  Thus, $p(x)$ has a real root $\mu$ larger than $\lambda$. 
Using the quadratic formula again, we see that $\mu^g = 2 + \sqrt{3}$, 
and hence $\lambda^g < \mu^g = 2+\sqrt{3}$. 
\end{proof}

\section{Further discussion and questions}
\label{discussion-section}

By Propositions~\ref{min-prop} and \ref{min-Smn-Bmm-prop}, 
for $s \ge 5$ strands, the minimal dilatations by our construction 
come from $\sigma_{g-1,g+1}$ when $s = 2g+1$; 
and $\sigma_{g-1,g+2}$ when $s=2g+2$.   
For $s$ even, there is an example of a braid with smaller dilatation 
than that of $\sigma_{g-1,g+2}$ (see the end of \fullref{forcing-section}), 
but for $s$ odd, we know of no such examples. 

Since $\Sigma(\sB(D,2g+1)) \subset \Sigma(\sM_g)$ (\fullref{spectrum-prop}), 
Penner's lower bound \cite{Penner91} for elements of $\Sigma(\sM_g)$ extend to 
$\Sigma(\sB(D,2g+1))$. 
Hence we have 
$$
\delta(\sB(D,2g+1)) \ge \delta(\sM_g) \ge  \frac{\log 2}{12 g - 12}. 
$$
For $g =2$, Zhirov  shows \cite{Zhirov95} that if $\phi \in \sM_2$ is pseudo-Anosov 
with orientable invariant foliations, then $\lambda(\phi)$ is bounded below by the largest root of 
$x^4 -x^3 - x^2 -x+1$. 
For $s=5$, $\sigma_{1,3}$ is pseudo-Anosov, and its lift to $F_2$ is orientable.  
Our formula shows that the dilatation of $\sigma_{1,3}$  is the largest root of Zhirov's equation, 
and hence $\sigma_{1,3}$ achieves the least dilatation 
among orientable pseudo-Anosov maps on $F_2$. 
This yields the following weaker version of Ham and Song's result \cite{HamSong05}, 
which doesn't assume any conditions on the combinatorics of train tracks. 

\begin{cor} 
The braid $\sigma_{1,3}$ is pseudo-Anosov with the least dilatation among braids 
$\beta \in \sB(D;\sS)$ on $5$ strands such that 
all singularities of  $S^2 \setminus (\sS \cup \{p_{\infty}\})$ 
for the invariant foliations associated to  the pseudo-Anosov map $\Phi_{\what{\beta}}$ 
are even--pronged. 
\end{cor}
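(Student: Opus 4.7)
The plan is to reduce the problem to Zhirov's bound for orientable pseudo-Anosov elements of $\sM_2$ via the double branched cover $F \to S^2$ branched over $\sS \cup \{p_\infty\}$, and then identify $\lambda(\sigma_{1,3})$ with the largest root of Zhirov's polynomial $x^4 - x^3 - x^2 - x + 1$.

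Let $\beta \in \sB(D;\sS)$ be a pseudo-Anosov $5$--braid satisfying the hypothesis, with pseudo-Anosov representative $\Phi = \Phi_{\what{\beta}}$ and invariant foliations $\sF^{\pm}$ on $S^2$. Set $\wwhat{\sS} = \sS \cup \{p_\infty\}$, so $|\wwhat{\sS}| = 6$, and let $\pi \colon F \to S^2$ be the double cover branched over $\wwhat{\sS}$; Riemann--Hurwitz gives $F$ genus $2$.  Following the proof of \fullref{spectrum-prop}, lift $\Phi$ to a pseudo-Anosov homeomorphism $\Phi'$ on $F$ with $\lambda(\Phi') = \lambda(\beta)$ whose invariant foliations $(\sF')^{\pm}$ are the pullbacks of $\sF^{\pm}$.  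Write $\wwhat{\sS}' = \pi^{-1}(\wwhat{\sS})$.

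Next I claim that $(\sF')^{\pm}$ are orientable.  At each branch point of $\pi$, a $k$--pronged singularity of $\sF^{\pm}$ lifts to a single $2k$--pronged singularity, so the prong order at every point of $\wwhat{\sS}'$ is automatically even.  Away from $\wwhat{\sS}$, the prong orders of $\sF^{\pm}$ are even by hypothesis, and each such singularity lifts to two copies with the same prong count.  Hence every singularity of $(\sF')^{\pm}$ is even--pronged.  The $Z/2Z$--class argument at the end of the proof of \fullref{Bmn-lift-prop} now applies verbatim: every small loop around a singularity of $(\sF')^{\pm}$ has trivial class in the orientability homomorphism $\pi_1(F \setminus \mathrm{Sing}) \to Z/2Z$, such loops generate, so the homomorphism is trivial and $(\sF')^{\pm}$ are orientable.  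Since the prong orders of $(\sF')^{\pm}$ at points of $\wwhat{\sS}'$ are all $\ge 2$ and in particular none are $1$--pronged, \fullref{closure2-lem} implies that the image $\psi \in \sM_2$ of $[\Phi']$ under the forgetful map is pseudo-Anosov with $\lambda(\psi) = \lambda(\beta)$ and orientable invariant foliations.  Zhirov's theorem then yields $\lambda(\beta) \ge \lambda_Z$, where $\lambda_Z$ is the largest root of $x^4 - x^3 - x^2 - x + 1$.

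It remains to check $\lambda(\sigma_{1,3}) = \lambda_Z$.  By \fullref{CharEq-thm}(2) with $(m,n) = (1,3)$ and $R_1(t) = t^2 - t - 2$, $\lambda(\sigma_{1,3})$ is the largest root of
$$
S_{1,3}(t) = t^4 R_1(t) - (R_1)_*(t) = t^6 - t^5 - 2t^4 + 2t^2 + t - 1 = (t-1)(t+1)(t^4 - t^3 - t^2 - t + 1),
$$
which is precisely $\lambda_Z$.  Combining the two steps gives $\lambda(\beta) \ge \lambda(\sigma_{1,3})$, as claimed.  The main obstacle is the orientability step: the argument in \fullref{Bmn-lift-prop} is written for the particular case where every point of $\wwhat{\sS}$ is $1$--pronged, so one must verify that it depends only on all singularities of $(\sF')^{\pm}$ being even--pronged, which is guaranteed here by the hypothesis together with the automatic doubling of prong orders at the branch points of $\pi$.
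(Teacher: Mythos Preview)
Your orientability step has a genuine error: the claim that small loops around the singularities of $(\sF')^\pm$ generate $\pi_1(F \setminus \mathrm{Sing})$ is false. Since $F$ has genus $2$, its fundamental group is already nontrivial, and puncturing at finitely many points adds peripheral generators subject to a single surface relation with the handle generators $a_i,b_i$; the peripheral loops alone never generate. Knowing that every singularity of $(\sF')^\pm$ is even--pronged tells you the orientability homomorphism kills each peripheral loop, but says nothing about its values on $a_1,b_1,a_2,b_2$.

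The argument in \fullref{Bmn-lift-prop} is not the one you paraphrase; it works downstairs. There $\pi_1(S^2 \setminus \wwhat{\sS})$ \emph{is} generated by small loops around points of $\wwhat{\sS}$, and the orientation homomorphism $\omega$ of $\sF^\pm$ sends such a loop to the parity of the prong number at the enclosed point. In that proposition every point of $\wwhat{\sS}$ is $1$--pronged, so $\omega$ coincides with the homomorphism $\rho$ defining the double branched cover; hence $\omega$ vanishes on $\ker\rho$, which is the image of $\pi_1$ of the cover, and the lift is orientable. To transport this argument here you need every point of $\wwhat{\sS}$ to have \emph{odd} prong number, which the corollary's hypothesis does not assert. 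An Euler--Poincar\'e count on $S^2$ shows that, under the hypothesis, the number of odd--pronged points of $\wwhat{\sS}$ is either $6$ (and then the argument goes through) or exactly $4$, the latter forcing four $1$--pronged marked points, two regular marked points, and no unmarked singularities; this residual case must be handled separately, for instance by forgetting the two regular marked points and invoking the known minimum dilatation on the $4$--punctured sphere.
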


We discuss the following general question and related work on the forcing relation in \fullref{forcing-section}.
\begin{ques} 
Is there a braid $\beta \in \sB(D, 2g+1)$ such that 
$\lambda(\beta)< \lambda(\sigma_{g-1,g+1})$? 
\end{ques}

Let $\sK_g^s \subset \sM_g^s$ be the subset of mapping classes that arise as the monodromy of 
a fibered link  $(K,F)$ in $S^3$, where the fiber $F$ has genus--$g$ and the link $K$ has 
$s$ components. 
\begin{ques}\label{fiberedlink-ques} Is there a strict inequality 
$\delta(\sM_g^s) < \delta(\sK_g^s)$? 
\end{ques}
In \fullref{fiberedQ-section}, we briefly discuss what is known about bounds on dilatations of 
pseudo-Anosov monodromies of fibered links, and show how the braids $\beta_{m,n}$ arise 
in this class.

\subsection{The forcing relation on the braid types}
\label{forcing-section}

The existence of periodic orbits of dynamical systems can imply the existence of other periodic orbits. 
Continuous maps of the interval give typical examples for such phenomena. 
Boyland introduced the notion of {\it braid types}, and defined 
a relation on the set of braid types to study an analogous phenomena  in the $2$--dimensional case. 
Recall that there is an isomorphism 
$$
\mathcal{B}(D;\sS)/Z(\mathcal{B}(D; \sS)) \rightarrow M(D; \sS).
$$
Let $f \colon\thinspace D \rightarrow D$ be an orientation preserving homeomorphism 
with a single periodic orbit $\sS$. 
The isotopy class of $f$ relative to $\sS$ is represented by 
$\beta Z(\mathcal{B}(D; \sS))$ for some braid $\beta \in \mathcal{B}(D;\sS)$ 
by using the isomorphism above. 
The {\it braid type} of $\sS$ for $f$, denoted by $bt(\sS,f)$, is  the conjugacy class 
$[ \beta Z(\mathcal{B}(D; \sS))]$ in the group $\mathcal{B}(D;\sS)/Z(\mathcal{B}(D; \sS))$. 
To simplify the notation, 
we will write $[ \beta ]$ for $[ \beta Z(\mathcal{B}(D; \sS))]$. 
Let
\begin{eqnarray*}
bt(f) &=& \{bt(P,f)\ |\ P\ \mbox{is\ a\ single\ periodic\ orbit\ for\ }f\},
\end{eqnarray*}
and $BT$ the set of all braid types for all homeomorphisms of $D$. 
A relation $\succeq$ on $BT$ is defined as follows: 
For $b_i \in BT$ ($i=1,2$), 
\begin{center}
$b_2 \succeq b_1 \Longleftrightarrow$ 
(For any $f\co D \rightarrow D$,  if $b_2 \in bt(f)$, then  $b_1 \in bt(f))$. 
\end{center}
We say that $b_2$ {\it forces} $b_1$ if $b_2 \succeq b_1$. 
It is known that $\succeq $ gives a partial order on $BT$ (see Boyland
\cite{Boy92} and Los \cite{Los}), and 
we call the relation the {\it forcing relation}. 

The topological entropy gives a measure of orbits complexity for a continuous map 
of the compact space (see Walters \cite{Wal}). Let $h(f) \ge 0$ be the topological entropy of $f$. 
For a pseudo-Anosov braid $\beta \in \sB(D;\sS)$, 
$\log(\lambda(\beta))$ is equal to $h(\beta)$,  
which in turn is the least $h(f)$ among all $f$ with an invariant set $\sS$ such that 
$ bt(\sS,f) = [ \beta ]$ (see Fathi--Laudenbach--Poenaru
\cite[Expos\'{e}~10]{FLP}).
One of the relations between the forcing relation and the dilatations is as follows. 

\begin{thm}[Los \cite{Los}]
\label{thm_Los} 
Let $\beta_1$ and $\beta_2$ be pseudo-Anosov braids. 
If $[ \beta_2 ] \succeq  [ \beta_1 ]$ and $[ \beta_2 ]  \ne [ \beta_1 ]$, then 
$\lambda(\beta_2) > \lambda(\beta_1)$.
\end{thm}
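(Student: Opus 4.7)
The plan is to combine the characterization of $\log \lambda(\beta)$ as the least topological entropy $h(f)$ among homeomorphisms $f$ having a periodic orbit of braid type $[\beta]$ (stated just before the theorem, and realized by the pseudo-Anosov representative) with the forgetful-map estimates of \fullref{closure1-lem} and \fullref{closure2-lem}, and a rigidity property: within a pseudo-Anosov isotopy class, the pseudo-Anosov representative is the unique entropy minimizer up to a continuous semi-conjugacy homotopic to the identity.

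First I would derive the weak inequality $\lambda(\beta_2) \ge \lambda(\beta_1)$. Let $\Phi_2$ be the pseudo-Anosov representative of $\beta_2$, so $h(\Phi_2) = \log \lambda(\beta_2)$. Since $[\beta_2] \in bt(\Phi_2)$, the hypothesis $[\beta_2] \succeq [\beta_1]$ yields a periodic orbit $P_1$ of $\Phi_2$ with $bt(P_1, \Phi_2) = [\beta_1]$. View $\Phi_2$ as the pseudo-Anosov representative of a class in $\sM(D; \sS_2, P_1)$ (same invariant foliations, with $P_1$ adjoined as marked points); the forgetful map $\sM(D; \sS_2, P_1) \to \sM(D; P_1)$ sends this class to one conjugate to $\phi_{\beta_1}$ by \fullref{invarianceM-lem}, hence pseudo-Anosov with dilatation $\lambda(\beta_1)$. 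Applying \fullref{closure1-lem} delivers $\lambda(\beta_2) \ge \lambda(\beta_1)$.

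For strict inequality under $[\beta_1] \ne [\beta_2]$, I would argue by contradiction: suppose $\lambda(\beta_1) = \lambda(\beta_2)$. First, this equality forces $\Phi_2$ to have no $1$-pronged singularity at any point of $\sS_2$ --- otherwise, after forgetting $\sS_2$, the invariant foliations would have a forbidden $1$-pronged singularity at a non-marked point of $(D, P_1)$, so $\Phi_2$ could not be the pseudo-Anosov representative on $(D, P_1)$, and by the rigidity the genuine pseudo-Anosov representative $\Phi_1$ of $\beta_1$ would satisfy $h(\Phi_1) < h(\Phi_2)$, contradicting equality. Under the no $1$-pronged condition, \fullref{closure2-lem} asserts that $\Phi_2$ itself is the pseudo-Anosov representative of the class on $(D, P_1)$; by uniqueness there is then a homeomorphism $\eta$ of $D$ conjugating $\Phi_2$ to $\Phi_1$, so $\eta(\sS_2)$ is a periodic orbit of $\Phi_1$ of braid type $[\beta_2]$ via \fullref{invarianceM-lem}, giving $[\beta_2] \in bt(\Phi_1)$. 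To promote this to the forcing statement $[\beta_1] \succeq [\beta_2]$ (required for arbitrary $f$), I would invoke the pseudo-Anosov shadowing principle: any $f$ with $bt(Q, f) = [\beta_1]$ for some orbit $Q$ admits a continuous semi-conjugacy homotopic to the identity rel $Q$ onto a pseudo-Anosov representative of $\beta_1$, under which each periodic orbit of the representative is shadowed by a periodic orbit of $f$ with the same braid type. In particular $[\beta_2] \in bt(f)$, so $[\beta_1] \succeq [\beta_2]$; together with $[\beta_2] \succeq [\beta_1]$ and antisymmetry of $\succeq$, this forces $[\beta_1] = [\beta_2]$, contradicting the assumption.

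The main obstacle is the rigidity input: that within a pseudo-Anosov isotopy class the pseudo-Anosov representative is the unique entropy minimizer up to semi-conjugacy homotopic to the identity. This underpins both the ``converse'' of \fullref{closure2-lem} used to rule out $1$-pronged singularities at $\sS_2$, and the pseudo-Anosov shadowing used to transfer $[\beta_2]$ through an arbitrary realization of $[\beta_1]$. These are classical results due to Handel (with Asimov--Franks for shadowing), but their careful application requires verifying that identity-homotopic (semi-)conjugacies preserve braid types of transported periodic orbits --- a priori a semi-conjugacy could collapse an orbit and alter its braid type.
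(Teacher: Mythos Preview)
The paper does not prove this theorem: it is stated with attribution to Los and invoked as a black box, with no proof given in the text. There is therefore nothing in the paper to compare your proposal against.

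For what it is worth, your sketch follows the standard route to this result in the literature. The weak inequality via the entropy characterization and the forgetful-map estimate is clean; the strict inequality indeed rests on the rigidity and shadowing results of Handel (and Asimov--Franks) that you flag as external input. One technical point to watch in your first paragraph: when you adjoin $P_1$ to the marked set and declare $\Phi_2$ pseudo-Anosov on $(D;\sS_2,P_1)$, you are implicitly using that the foliations of $\Phi_2$ are already admissible at the points of $P_1$ (regular points are $2$--pronged, which is allowed at marked points), so this step is fine but deserves a word. The genuine work, as you say, is in the converse direction under equality of dilatations, and that is precisely where Los's argument lives.
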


The forcing relation on braids $\beta_{m,n}$ and $\sigma_{m,n}$ was
studied by Kin \cite{Kin}. 

\begin{thm}
\label{thm_order1} 
For any $m,n \ge 1$, 
\begin{enumerate}
\item[(1)] $[\beta_{m,n}]  \succeq [\beta_{m,n+1}]$, 
\item[(2)] $[\beta_{m,n}]  \succeq [\beta_{m+1,n}]$, 
\item[(3)] $[\beta_{m,n}]  \succeq [\sigma_{m,\ell}]$ if $\ell \ge m+2$, and 
\item[(4)] $[ \sigma_{m,n} ]   \succeq [ \sigma_{m,\ell} ]$ 
if $n \ge \ell \ge m+2$. 
\end{enumerate}
\end{thm}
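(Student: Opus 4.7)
The plan is to apply the following standard consequence of Handel's global shadowing theorem: if $\Phi$ is the Thurston--Nielsen representative of a pseudo-Anosov braid $\beta \in \sB(D;\sS)$, and $P \subset D \setminus \sS$ is any periodic orbit of $\Phi$ (regarded as a self-homeomorphism of $D$, forgetting the marking on $\sS$), then $[\beta] \succeq bt(P,\Phi)$. Any $f$ realizing $[\beta]$ is isotopic to $\Phi$ relative to the $[\beta]$-orbit, and global shadowing produces a periodic orbit of $f$ of the same braid type as $P$. Granting this principle, each of (1)--(4) reduces to exhibiting a specific periodic orbit of the appropriate pseudo-Anosov map whose braid type matches the right-hand side.

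For (1), I would work with $\Phi_{\beta_{m,n}}$ and its invariant train track $\tau_{\g_{m,n}}$ from \fullref{Bmn-TT}. The fibered surface $\mathcal{F}(G_{m,n})$ supplies a Markov partition of a neighbourhood of $\tau_{\g_{m,n}}$, so periodic orbits of $\Phi_{\beta_{m,n}}$ correspond to admissible cyclic edge-paths in $G_{m,n}$ under $\g_{m,n}$. A natural candidate is already visible: the vertex $q$ in \fullref{BmnGraph-fig} is fixed by $\g_{m,n}$, and the $(n+1)$-pronged singularity it carries (\fullref{Bmn-prongs-lem}) suggests a small loop through $q$ can play the role of the added rightmost strand of $\beta_{m,n+1}$. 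More generally, a short combinatorial search on the transition matrix yields a cyclic edge-path of length $m+n+2$; realizing it geometrically as an orbit $P$ and comparing the winding of $P$ through the strips with the standard drawing of $\beta_{m,n+1}$ in \fullref{SmnBmn-fig}(a) identifies the braid type. For (3), the same strategy on $\tau_{\g_{m,n}}$ applies, but the target edge-path must wind in the spherical-twist pattern of $\sigma_{m,\ell}$; for (4), one replaces $\tau_{\g_{m,n}}$ by $\tau_{\h_{m,n}}$ (\fullref{Smn-TT}) and repeats the analysis.

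Part (2) is a formal consequence of (1) and the inverse-conjugation symmetry of \fullref{Bmnplus-lem}, which via \fullref{braidcenter-lem} yields the braid-type equality $[\beta_{n,m}] = [\beta_{m,n}^{-1}]$. The forcing relation is preserved under inversion because $bt(P,f^{-1}) = bt(P,f)^{-1}$, so (1) applied with $(m,n)$ replaced by $(n,m)$ gives $[\beta_{n,m}] \succeq [\beta_{n,m+1}]$, which inverts to $[\beta_{m,n}] \succeq [\beta_{m+1,n}]$.

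The hard part will be verifying that a periodic orbit identified symbolically actually carries the asserted braid type. Locating a cyclic edge-path of the correct length from the transition matrix is routine, but extracting its Artin word in $\sB(D;\sS \cup P)$ requires lifting the path to the universal cover of $D \setminus \sS$ and accounting for every crossing with the strands of $\sS$; one must then apply the forgetful map $\sB(D;\sS \cup P) \to \sB(D;P)$ and check that the result is conjugate modulo center to $\beta_{m,n+1}$, $\beta_{m+1,n}$, or $\sigma_{m,\ell}$. This geometric accounting of winding numbers, repeated in parallel for the four cases, is where the technical labour will reside.
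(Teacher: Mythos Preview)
The paper does not prove this theorem at all: it is quoted from \cite{Kin} (the sentence immediately preceding the statement reads ``The forcing relation on braids $\beta_{m,n}$ and $\sigma_{m,n}$ was studied by Kin \cite{Kin}''), and no argument is supplied here. So there is nothing in the present paper to compare your proposal against.

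That said, your overall strategy is the standard one for results of this type and is almost certainly the route taken in \cite{Kin}: exhibit, inside the pseudo-Anosov representative of the forcing braid, a periodic orbit whose braid type is the forced one, and invoke the unremovability of pseudo-Anosov periodic orbits (Handel, Boyland) to transport that orbit to any $f$ realizing the forcing type. Your derivation of (2) from (1) via the conjugacy $[\beta_{n,m}] = [\beta_{m,n}^{-1}]$ of \fullref{Bmnplus-lem} is clean and correct.

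The genuine gap is precisely the one you flag at the end: you have not produced the periodic orbits, only described where to look for them. The suggestion for (1)---using the fixed $(n{+}1)$-pronged singular point $q$ as an extra strand---does not by itself yield an orbit of braid type $[\beta_{m,n+1}]$, since $q$ is a single fixed point and you need a full $(m{+}n{+}2)$-orbit; and the phrase ``a short combinatorial search on the transition matrix yields a cyclic edge-path of length $m{+}n{+}2$'' is not yet an argument. Likewise for (3) and (4) you would need, for each $\ell \ge m+2$, a separate orbit, and the verification that its braid type is $[\sigma_{m,\ell}]$ cannot be done by length alone. Until those orbits are written down and their braid types checked, the proposal remains a plan rather than a proof.
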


\begin{figure}[htbp]
\labellist\small
\hair=1pt
\pinlabel {$S_1$} at 80 750
\pinlabel {$R_1$} at 120 750
\pinlabel {$R$} at 150 750
\pinlabel {$R_0$} at 180 750
\pinlabel {$S_0$} at 220 750

\pinlabel {$H(R_0)$} at 433 765
\pinlabel {$H(R_1)$} at 433 730
\pinlabel {$H(S_0)$} [l] at 505 767
\pinlabel {$H(S_1)$} [l] at 505 731
\endlabellist
\begin{center}
\includegraphics[width=9cm]{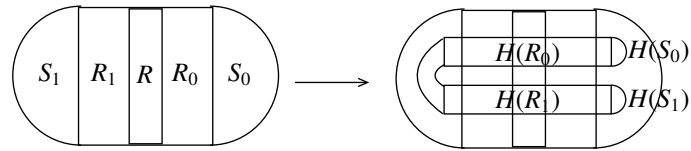}
\caption{Smale--horseshoe map}
\label{fig_smale-h}
\end{center}
\end{figure}

The Smale--horseshoe map $\mathtt{H} \colon\thinspace D \rightarrow D$ is a diffeomorphism  
such that the action of $\mathtt{H}$ on three rectangles $R_0,R_1$, and $R$ and two half disks $S_0,S_1$ is given in \fullref{fig_smale-h}. 
The restriction of $\mathtt{H}$ to $R_i$ ($i=0,1$) is an affine map such that 
$\mathtt{H}$ contracts $R_i$ vertically and stretches horizontally. 
The restriction of $\mathtt{H}$ to $S_i$ ($i=0,1$) is a contraction map. 
Katok showed \cite{Katok80} that 
any $C^{1 + \epsilon}$ surface diffeomorphism ($\epsilon >0$) with  positive topological entropy has 
a horseshoe  in some iterate. 
This suggests that the Smale--horseshoe map is a fundamental model for chaotic dynamics. 

The set
$$\Omega = \displaystyle\bigcap_{n \in {\Z}} \mathtt{H}^n (R_0 \cup R_1)$$
is invariant under $\mathtt{H}$. 
Let $\Sigma_2= \{0,1\}^{\Z}$, and 
\begin{eqnarray*}
\sigma \colon\thinspace \Sigma_2 &\rightarrow& \Sigma_2
\\
(\ldots w_{-1}\cdot w_0 w_1 \ldots) &\mapsto& (\ldots w_{-1}w_0 \cdot w_1 \ldots),
\quad w_i \in \{0,1\}
\end{eqnarray*}
the shift map. 
There is a conjugacy $\mathcal{K} \colon\thinspace \Omega \rightarrow \Sigma_2$ 
between the two maps 
$\mathtt{H}|_{\Omega} \colon\thinspace \Omega \rightarrow \Omega$ and 
$\sigma \colon\thinspace \Sigma_2 \rightarrow \Sigma_2$ as follows: 
\begin{eqnarray*}
\mathcal{K} \colon\thinspace \Omega &\rightarrow& \Sigma_2
\\
x &\mapsto& (\ldots \mathcal{K}_{-1}(x) \mathcal{K}_0(x) \mathcal{K}_1(x) \ldots),
\end{eqnarray*}
where
\[
\mathcal{K}_i(x) =
\left\{
\begin{array}{ll}
0 \hspace{3mm}\  \mbox{if\ }&\mathtt{H}^i(x) \in R_0,
\\
1 \hspace{3mm}\  \mbox{if\ }&\mathtt{H}^i(x) \in R_1.
\end{array}
\right.
\]
If $x$ is a period $k$ periodic point, then 
the finite word $ (\mathcal{K}_0(x) \mathcal{K}_1(x) \ldots \mathcal{K}_{k-1}(x))$ is called 
the {\it code} for $x$. 
We say that a braid $\beta $ is a {\it horseshoe braid} 
if there is a periodic orbit for the Smale--horseshoe map whose braid type is $[ \beta]$. 
We define a horseshoe braid type in a similar manner. 
For the study of  the restricted forcing relation on the set of horseshoe braid types, see 
the papers \cite{dCH04,Hall94} by de Carvalho and Hall. 

\begin{figure}[htbp]
\labellist\small
\pinlabel {$R_1$} [b] at 155 251
\pinlabel {$R_0$} [b] at 277 251
\pinlabel {$a$} [t] at 137 183
\pinlabel {$b$} [t] at 166 224
\pinlabel {$c$} [b] <0pt,1pt> at 259 81
\pinlabel {$d$} [t] at 277 210
\pinlabel {$e$} [t] at 297 123
\pinlabel {$a$} [b] at 461 221
\pinlabel {$b$} [b] at 496 221
\pinlabel {$c$} [b] at 572 221
\pinlabel {$d$} [b] <1pt,0pt> at 607 221
\pinlabel {$e$} [b] at 645 221
\pinlabel {$1$} [b] at 482 271
\pinlabel {$0$} [b] at 613 271
\endlabellist
\begin{center}
\includegraphics[width=7cm]{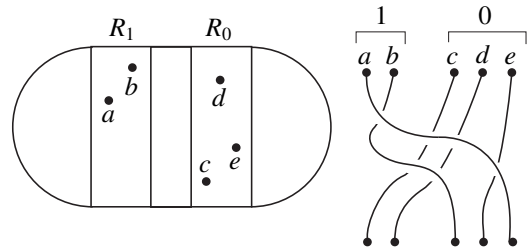}
\caption{Periodic orbit with the code $10010$ and its braid representative}
\label{fig_smale-h-orbit}
\end{center}
\end{figure}

The result by Katok together with \fullref{thm_Los} implies that 
horseshoe braids are relevant candidates realizing the least dilatation. 
It is not hard to see that the braid type of the periodic orbit with the code 
$$1 \underbrace{0 \ldots 0}_{n-1} 1 \underbrace{0 \ldots 0}_{m}
\quad\text{or}\quad
1 \underbrace{0 \ldots 0}_{n-1} 1 \underbrace{0 \ldots 0}_{m-1}1
\quad(n \ge m+2)$$ 
is represented by $[\sigma'_{m,n}](= [\sigma_{m,n}])$ (For the definition of $\sigma'_{m,n}$, 
see the end of \fullref{Graphmap-section}). 
Hence, $\sigma_{m,n}$ $(n \ge m+2)$ is a horseshoe braid. 
\fullref{fig_smale-h-orbit}  illustrates the periodic orbit with the code $10010$ 
and its braid representative. 

For the case of even strands, 
there is a horseshoe braid having dilatation less than our examples. 
In fact, the braid type of period $8$ periodic orbit with the code $10010110$ is given by 
$[\beta= (\sigma_1 \sigma_2 \sigma_3 \sigma_4 \sigma_5 \sigma_6)^3 \sigma_7]$, 
which satisfies
$\lambda(\beta)= 1.4134 \ldots < \lambda(\sigma_{2,5}) = 1.5823 \ldots$. 

\subsection{Fibered links}
\label{fiberedQ-section}

For a fibered link $(K,F)$ with fibering surface $F$, the {\it monodromy} 
$\Phi_{(K,F)} \colon\thinspace F \rightarrow F$ 
is the homeomorphism defined up to isotopy such that the complement of a regular neighborhood of 
$K$ in $S^3$ is a mapping torus for $\Phi_{(K,F)}$. 
Define $\Delta_{(K,F)}$ to be the characteristic 
polynomial for the monodromy $\Phi_{(K,F)}$ restricted to first homology $\HH_1(F,\R)$. 
If $K$ is a fibered knot, then  $\Delta_{(K,F)}$ is the Alexander polynomial of $K$ 
(see Kawauchi \cite{Kaw:Survey} and Rolfsen \cite{Rolfsen76}). 

The {\it homological dilatation} of a pseudo-Anosov map $\Phi \colon\thinspace F \rightarrow F$ 
is defined to be $\lambda(f)$, where $f$ is the characteristic polynomial for the restriction of $\Phi$ to 
$\HH_1(F;\R)$.  Thus, if $(K,F)$ is a fibered link and $\Phi_{(K,F)}$ is the 
monodromy, then $\lambda(\Delta_{(K,F)})$ is the homological dilatation of $\Phi_{(K,F)}$.   
In the case where $\Phi_{(K,F)}$ is a pseudo-Anosov map, 
$\lambda(\Delta_{(K,F)})$ and $\lambda(\Phi_{(K,F)})$ are equal 
if $\Phi_{(K,F)}$ is orientable (see Rykken \cite{Rykken99}). 

Any monic reciprocal integer polynomial is equal to $\Delta_{(K,F)}$ for some 
fibered link $(K,F)$ up to multiples of $(t-1)$ and $\pm t$ (see Kanenobu
\cite{Kanenobu81}).
In particular, any reciprocal Perron polynomial\footnote{A monic integer
polynomial $f$ is {\it Perron}
if $f$ has a root $\lambda(f)>1$ such that $\lambda(f)> |\alpha|$ 
for all roots $\alpha \ne \lambda(f)$.}
can be realized.    
On the other hand, if $\Phi_{(K,F)}$ is orientable,  then 
$\lambda(\Phi_{(K,F)})$ is in general strictly greater than $\lambda(\Delta_{(K,F)})$. 

Leininger \cite{Leininger04} exhibited a pseudo-Anosov map 
$\Phi_L\colon\thinspace  F_5 \rightarrow F_5$ with dilatation $\lambda_L$, where 
$$\log (\lambda_L) = 0.162358.$$ 
A comparison shows that this number is strictly less than our candidate for the least 
element of $\Sigma (\sB(D,{2g+1})$ for $g=5$: 
$$\log(\lambda(\sigma_{4,6}) )= 0.240965.$$
The pseudo-Anosov map $\Phi_L$ 
is realized as the monodromy of the fibered $(-2,3,7)$--pretzel knot. 
Its dilatation $\lambda_L$ is the smallest known Mahler measure greater
than $1$ among monic integer polynomials (see Boyd \cite{Boyd81} and
Lehmer \cite{Lehmer33}).

In the rest of this section, we will construct fibered links whose monodromies are obtained by 
lifting the spherical mapping classes associated to $\beta_{m,n}$.  
We set $g = \lfloor{\frac{m+n}{2}}\rfloor$. 
Let $\sS$ be the set of marked points on int$(D)$ corresponding to the strands of $\beta_{m,n}$, 
and $F$ the double covering of $D$, branched over $\sS$.  Then $F$ has 
one boundary component if $m+n$ is even and two boundary components if $m+n$ is odd. 
Let $\Phi'_{m,n}$ be the lift of the pseudo-Anosov representative $\Phi_{\beta_{m,n}}$ 
of $\phi_{\beta_{m,n}} \in \sM(D; \sS)$  to $F$.  
Using an argument similar to that in the proof of \fullref{spectrum-prop}, we have 
$$
\lambda(\Phi'_{m,n}) =\lambda(\Phi_{\beta_{m,n}})= \lambda(\beta_{m,n}).
$$
Note that  $\Phi'_{m,n}$ is $1$--pronged near each of the boundary of $F$ if $m+n$ is odd.

Let $K_{m,n}$ be the two--bridge link given in \fullref{twobridge-fig}. 
By viewing $(S^3,K_{m,n})$ as the result of a sequence 
of Hopf plumbings see Hironaka \cite[Section 5]{Hironaka:Salem-Boyd},
one has the following.

\begin{figure}[htbp]
\labellist\tiny
\pinlabel {$N$} at 22 142
\pinlabel {($N$ positive half twists)} [l] at 0 106
\pinlabel {($N$ negative half twists)} [l] at 0 12
\pinlabel {$-N$} at 22 50
\pinlabel {$-n$} at 213 130
\pinlabel {$m{+}1$} at 218 44
\pinlabel {$=$} at 50 142
\pinlabel {$=$} at 50 50
\endlabellist
\begin{center}
\includegraphics[height=1.5in]{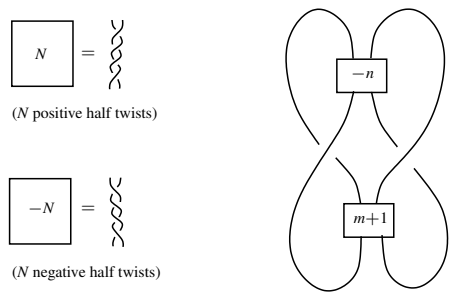}
\caption{Two--bridge link associated to $\beta_{m,n}$}
\label{twobridge-fig}
\end{center}
\end{figure}

\begin{prop}  
The complement of a regular neighborhood of $K_{m,n}$ in $S^3$ is 
a mapping torus for $\Phi'_{m,n}$.
\end{prop}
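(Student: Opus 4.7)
The plan is to prove this via a Hopf plumbing construction, as signaled by the reference to Hironaka \cite{Hironaka:Salem-Boyd}. The basic principle is Stallings' theorem: if $(K,F)$ is a fibered link with monodromy $\Phi_{(K,F)}$ and one plumbs a positive (resp.\ negative) Hopf band along a properly embedded arc in $F$, then the resulting link is also fibered, its fiber is $F$ with a band attached, and its monodromy is the composition of $\Phi_{(K,F)}$ with a right-handed (resp.\ left-handed) Dehn twist about the core circle of the plumbed band. I would start with the unknot, whose fiber is a disk, and describe an explicit sequence of Hopf plumbings that terminates in $(S^3,K_{m,n})$ with fiber surface $F$, keeping track of the monodromy at each step.

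The crucial point is how half-twists on $D$ correspond to Hopf plumbings under the double branched cover. Consider the standard model: the double cover of a disk branched at two points is an annulus, and the half-twist exchanging the two branch points lifts to a right-handed Dehn twist about the core of the annulus — which is exactly the monodromy of a positive Hopf band plumbed onto a disk. Inductively, plumbing a sequence of Hopf bands onto the disk produces, after double branched covering, a surface obtained from an annulus by successive band attachments, and the braid generator $\sigma_i$ (resp.\ $\sigma_i^{-1}$) on $D$ with branch points $\sS$ lifts to the Dehn twist associated to a positive (resp.\ negative) Hopf plumbing on the covering surface. Thus the monodromy of the fibered link built by the plumbing sequence matching the word $\sigma_1\cdots\sigma_m\sigma_{m+1}^{-1}\cdots\sigma_{m+n}^{-1}$ is precisely $\Phi'_{m,n}$.

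Next I would match the combinatorics: the two-bridge link $K_{m,n}$ of \fullref{twobridge-fig} has a planar diagram with $m+1$ positive half-twists in one tangle region and $n$ negative half-twists in the other, and a standard argument (essentially the one used in Section~5 of \cite{Hironaka:Salem-Boyd}) identifies the successive Hopf plumbings that produce this diagram from the unknot. The resulting fiber is a surface $F'$ built from a disk by $m+n+1$ band attachments, and a Euler-characteristic computation shows $F'$ has genus $g=\lfloor (m+n)/2\rfloor$ with one boundary component when $m+n$ is even and two when $m+n$ is odd, agreeing with $F$. The monodromy is a product of Dehn twists along the cores of the plumbed bands, which by the lifting principle above is conjugate to $\Phi'_{m,n}$.

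The main obstacle is bookkeeping: correctly matching the geometric Hopf plumbing sequence (with the signs and locations of the bands dictated by \fullref{twobridge-fig}) with the algebraic word in the braid generators, and handling the parity-dependent topology of $F$ so that the one-pronged singularities of $\Phi'_{m,n}$ at $\partial F$ in the odd case are consistent with the monodromy of a genuine fibered link in $S^3$ (rather than only a fibered tangle). Once the plumbing sequence is identified with the braid word and the resulting monodromy is identified with $\Phi'_{m,n}$, the proposition follows from Stallings' theorem.
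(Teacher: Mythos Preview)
Your approach is correct and is precisely the route the paper intends: the paper does not give a proof block at all but simply states that the proposition follows ``by viewing $(S^3,K_{m,n})$ as the result of a sequence of Hopf plumbings'' and cites \cite[Section~5]{Hironaka:Salem-Boyd}. Your expansion via Stallings' theorem and the correspondence between half-twists on the branched disk and Dehn twists on the double cover is exactly what underlies that citation.

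One small remark: your worry about the $1$--pronged singularities at $\partial F$ when $m+n$ is odd is not an actual obstacle. The proposition is a statement about the monodromy as a mapping class, not about its pseudo-Anosov representative; the Hopf-plumbing argument identifies the isotopy class of the monodromy with the lift of $\phi_{\beta_{m,n}}$, and the prong data is separate information about a particular representative of that class. Also, double-check the band count against the twist parameters in \fullref{twobridge-fig} (the regions carry $m+1$ positive and $n$ negative half-twists, giving $m+n$ plumbings from the initial disk fiber and hence Euler characteristic $1-(m+n)$, which matches $F$).
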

The fibered links $K_{m,n}$ and the dilatations of $\Phi'_{m,n}$ were
also studied by Brinkmann \cite{Brinkmann04}.

Let $\Delta_{m,n}$ be the Alexander polynomial for $K_{m,n}$. 
Salem--Boyd sequences for $\Delta_{m,n}$ were computed by Hironaka \cite{Hironaka:Salem-Boyd}. 
\fullref{Bmn-lift-prop} implies the following. 

\begin{lem} 
If $m$ and $n$ are both odd, then 
$\lambda(\beta_{m,n}) = \lambda(\Phi'_{m,n}) =  \lambda(\Delta_{K_{m,n}})$. 
\end{lem}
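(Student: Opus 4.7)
The first equality $\lambda(\beta_{m,n}) = \lambda(\Phi'_{m,n})$ is already stated in the paragraph immediately preceding the lemma (it follows from the covering-space argument in the spirit of \fullref{spectrum-prop}), so the content of the lemma is the identity $\lambda(\Phi'_{m,n}) = \lambda(\Delta_{K_{m,n}})$. My plan is to reduce this to the Rykken criterion cited earlier in \fullref{fiberedQ-section}: when the monodromy of a fibered link is pseudo-Anosov \emph{and orientable}, its geometric dilatation coincides with the spectral radius of the Alexander polynomial.

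By the proposition immediately above the lemma, the complement of a regular neighborhood of $K_{m,n}$ in $S^3$ is a mapping torus for $\Phi'_{m,n}$. Consequently $\Delta_{K_{m,n}}$ is (up to units) the characteristic polynomial of the induced action of $\Phi'_{m,n}$ on $\HH_1(F;\R)$, so $\lambda(\Delta_{K_{m,n}})$ is by definition the homological dilatation of $\Phi'_{m,n}$. Thus, once orientability of $\Phi'_{m,n}$ is established, Rykken's result gives the desired equality. The bulk of the work is therefore to verify that $\Phi'_{m,n}$ is orientable when both $m$ and $n$ are odd.

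For orientability I would follow the strategy of \fullref{Bmn-lift-prop}. The train track $\tau_{\g_{m,n}}$ of \fullref{Bmn-TT}, together with \fullref{prongs-lem}, yields the disk analog of \fullref{Bmn-prongs-lem}: the invariant foliations of $\Phi_{\beta_{m,n}}$ on $D$ have $1$--pronged singularities at each point of $\sS$, and an $(m+1)$--pronged and an $(n+1)$--pronged singularity at two interior points $p,q \in D \setminus \sS$. Lifting to the double cover $F \to D$ branched over $\sS$, the points of $\sS$ become regular $2$--pronged points, while the preimages of $p$ and $q$ become $(m+1)$--, $2(m+1)$--, $(n+1)$-- or $2(n+1)$--pronged; since $m$ and $n$ are odd, each of $m+1$ and $n+1$ is even, so every interior singularity of $\Phi'_{m,n}$ is even--pronged. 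Orientability is then a monodromy statement: the map from the fundamental group of the complement of the singular set to $\Z/2\Z$ recording the local orientation of a leaf must be trivial. As in \fullref{Bmn-lift-prop}, one checks this generator by generator on $\pi_1(D\setminus \sS)$: each generator is a small loop around a point of $\sS$, whose two lifts in $F$ are small loops around a regular point, and therefore map trivially to $\Z/2\Z$.

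The main technical obstacle is ensuring that the argument used for closed surfaces in \fullref{Bmn-lift-prop} adapts cleanly to the case at hand, since when $m+n$ is even (as is forced by $m,n$ both odd) the surface $F$ has exactly one boundary component coming from $\partial D$. I expect this to cause no real trouble: the orientability condition for a measured foliation on a surface with boundary is again equivalent to the triviality of the $\Z/2\Z$--holonomy of the orientation double cover of $F$ minus its singular set, and that triviality is established by exactly the generator-by-generator check above, since the boundary of $F$ contributes no new singularities of odd order to the foliation.
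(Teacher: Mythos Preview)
Your approach is correct and matches the paper's, which gives no proof beyond the sentence ``\fullref{Bmn-lift-prop} implies the following'': you correctly identify that orientability of $\Phi'_{m,n}$ together with Rykken's criterion yields $\lambda(\Phi'_{m,n}) = \lambda(\Delta_{K_{m,n}})$, and that orientability is obtained by the argument of \fullref{Bmn-lift-prop} adapted to the bounded surface $F$. One small slip to fix: the points of $\sS$ are \emph{branch} points of $F \to D$, so a small loop around $p\in\sS$ does not have ``two lifts'' in $F$ but rather a single connected lift (its square lifts to a closed loop); the cleaner way to phrase the holonomy argument is, as in the proof of \fullref{Bmn-lift-prop}, that the orientation $\Z/2\Z$--holonomy of the foliation on $D\setminus\sS$ coincides with the homomorphism defining the double cover, and therefore becomes trivial upon pullback to $F$.
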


\begin{ques} 
Let $\Phi_{\sigma_{m,n}}$ be the pseudo-Anosov representative of $\phi_{\sigma_{m,n}}$. 
Is there a fibered link $K$ in $S^3$ such that 
the complement of a regular neighborhood of $K$ in $S^3$ is a mapping torus for 
a lift of $\Phi_{\sigma_{m,n}}$  ? 
%
\end{ques}

\bibliographystyle{gtart}
\bibliography{link}

\end{document}